\newtheorem{theorem}{Theorem}[section]
\newtheorem*{theorem*}{Theorem}
\newtheorem*{maintheorem*}{Main Theorem}
\newtheorem{lemma}[theorem]{Lemma}
\newtheorem{proposition}[theorem]{Proposition}
\newtheorem*{proposition*}{Proposition}
\newtheorem{corollary}[theorem]{Corollary}
\newtheorem*{corollary*}{Corollary}
\newtheorem*{conjecture*}{Conjecture}
\newtheorem*{question*}{Question}
\newtheorem*{construction*}{Construction}
\newtheorem*{theoremA}{Theorem~\ref{thrm:concomp}}
\newtheorem*{theoremB}{Theorem~\ref{thrm:roundingcurvesinsurfaces}}
\newtheorem*{theoremC}{Theorem~\ref{thrm:classifying}}
\theoremstyle{definition}
\newtheorem{definition}[theorem]{Definition}
\newtheorem*{definition*}{Definition}
\newtheorem{remark}[theorem]{Remark}
\newcommand{\QQ}{\mathbb{Q}}
\newcommand{\RR}{\mathbb{R}}
\newcommand{\HH}{\mathbb{H}}
\DeclareMathOperator{\Con}{Con}
\DeclareMathOperator{\Iso}{Iso}
\DeclareMathOperator{\Aut}{Aut}
\DeclareMathOperator{\BAut}{bAut}
\DeclareMathOperator{\PBAut}{pbAut}
\DeclareMathOperator{\Homeo}{Homeo}
\DeclareMathOperator{\Emb}{Emb}
\DeclareMathOperator{\UConf}{UConf}
\DeclareMathOperator{\Conf}{Conf}
\DeclareMathOperator{\UEmb}{UEmb} 
\DeclareMathOperator{\J}{J}
\DeclareMathOperator{\RJ}{RJ}
\DeclareMathOperator{\UJ}{UJ}
\DeclareMathOperator{\RUJ}{RUJ}
\DeclareMathOperator{\CJ}{CJ}
\DeclareMathOperator{\CUJ}{CUJ}
\DeclareMathOperator{\Q}{Q}
\DeclareMathOperator{\UQ}{UQ}
\DeclareMathOperator{\interior}{int}
\numberwithin{equation}{section}
\begin{document}
	
	\title{On spaces of embeddings of circles in surfaces}
	\date{\today}
	\subjclass[2020]{Primary 58D10,55R80; 
		Secondary 20F65,20F36} 
	
	\keywords{Embedded circles, configuration space, braid group, deformation retraction}
	\author[R.~C.~Gelnett]{Ryan C. Gelnett}
	\address{Department of Mathematics and Statistics, University at Albany (SUNY), Albany, NY}
	\email{rgelnett@albany.edu}
	
	\begin{abstract}
		We consider the space of embeddings of finitely many circles that bound disks in non-positively curved surfaces. We index the connected components of this space with finite rooted trees and show that the connected components are classifying spaces of the ``braided" automorphism groups of the associated trees.
		An intermediate step to proving these results is to construct a strong deformation retract onto the subspace of geometric circles; moreover, this strong deformation retraction is equivariant with respect to transformations of the surface.
	\end{abstract}
	
	\maketitle
	\thispagestyle{empty}
	
	\section{Introduction}
	Let $X$ be a Riemannian surface of non-positive curvature.
	Our space of interest, denoted $\UJ_n(X)$, is the space of sets of $n$ unlabeled, unparameterized, disjoint simple closed curves, called Jordan configurations, in $X$ such that each curve bounds a disk in $X$; that is, the subspace of 
	$$\Emb(\sqcup_i^nS^1,X)/\Homeo(\sqcup_i^nS^1)$$
	where each component of an element bounds a disk in $X$.
	We study the homotopy type of $\UJ_n(X)$ by indexing the connected components with finite rooted trees which denote the nesting order of the curves.
	 
	{\renewcommand*{\thetheorem}{\Alph{theorem}}
	\begin{theorem}\label{thrm:concomp}
		There is a one-to-one correspondence between the collection of finite rooted trees with $n$ non-root vertices and the connected components of $\UJ_n(X)$.
	\end{theorem}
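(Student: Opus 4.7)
The plan is to construct a map $T: \UJ_n(X) \to \mathcal{T}_n$, where $\mathcal{T}_n$ denotes the set of finite rooted trees with $n$ non-root vertices, and then verify that $T$ is locally constant, surjective, and has path-connected fibers; these three facts together yield the claimed bijection on $\pi_0(\UJ_n(X))$.

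Given a Jordan configuration $\vec{c} = \{c_1, \ldots, c_n\}$, each component curve $c_i$ bounds a \emph{unique} disk $D_i \subset X$: if both complementary sides of $c_i$ were disks then $X$ would be a sphere, violating the non-positive curvature hypothesis. Because the $c_i$ are pairwise disjoint, any two disks $D_i, D_j$ are either disjoint or nested, so the family $\{D_i\}$ forms a forest under inclusion; define $T(\vec{c})$ to be the rooted tree obtained by adjoining a single root vertex above this forest. This assignment is locally constant, since a sufficiently small $C^0$-perturbation of $\vec{c}$ preserves both disjointness of the curves and the containment pattern of the bounded disks. Surjectivity is immediate: any tree in $\mathcal{T}_n$ is realized by a configuration of nested round disks drawn in a Euclidean coordinate chart of $X$.

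The heart of the argument is path-connectedness of the fibers, which I would establish by induction on $n$. The base case $n=1$ reduces to the statement that any two disks in a connected surface are ambient-isotopic, a standard consequence of the change-of-coordinates principle. For the induction step, given $\vec{c}^{\,0}, \vec{c}^{\,1}$ with $T(\vec{c}^{\,0}) = T(\vec{c}^{\,1}) = T$, I would first restrict attention to the \emph{outermost} curves (those whose associated disk is maximal, equivalently children of the root of $T$), which form two configurations of $k \leq n$ disjoint disk-bounding curves in $X$. Since the unordered configuration space of $k$ disjoint disks in a connected surface is path-connected, there is an ambient isotopy of $X$ carrying the outermost curves of $\vec{c}^{\,0}$ onto those of $\vec{c}^{\,1}$, dragging the inner curves along. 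After this alignment, each matched outermost disk $D_j$ contains a sub-configuration with strictly fewer curves and the same subtree as in $T$, so the induction hypothesis applied inside each $D_j$ completes the proof. The principal obstacle will be making the above isotopy-extension step precise: one needs that inner configurations vary continuously with the outermost curves so that the ambient isotopy aligning outermost curves lifts to a path in $\UJ_n(X)$. The cleanest route is to exhibit the ``outermost curves'' restriction map as a locally trivial fibration onto its image with fiber a product of smaller $\UJ$-spaces; alternatively one can invoke the parametrized isotopy extension theorem directly.
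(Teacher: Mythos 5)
Your overall strategy (a tree invariant that is constant on components, surjectivity via nested round circles in a chart, and path-connectedness of the fibers by induction) is the same as the paper's, but your induction runs in the opposite direction and you leave the crux of the argument open. The paper does not peel off the outermost curves and recurse inward; instead it picks a vertex $v$ of $T$ all of whose children are leaves, deletes those innermost curves, applies the induction hypothesis to the resulting configuration of $n-m$ curves, and then reinserts the deleted curves. The step you correctly identify as ``the principal obstacle'' --- carrying nested curves continuously along a motion of the curve containing them --- is resolved in the paper not by isotopy extension but by the Carath\'eodory conformal parametrizations $\gamma_{D,p,u}$ of Lemma~\ref{lem:conformalmapping}: these depend continuously on the domain, so composing $\gamma_{f_{r_v,t}}\circ\gamma_{f_{r_v}}^{-1}$ with the inner curves transports them along the path $p_t$ of the parent curve while staying inside its disk. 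This is exactly the explicit local trivialization of the ``outermost curves'' fibration you propose (the paper builds that fibration in Section~5, but for Theorem~\ref{thrm:concomp} only the transport formula is needed). The paper also first applies the rounding retraction of Theorem~\ref{thrm:roundingcurvesinsurfaces} to reduce to round configurations, and imports path-connectedness of the relevant component spaces from Proposition~\ref{prop:components}, Corollary~\ref{cor:he}, and Corollary~\ref{cor:hebetweenconfpointandnonnestedcomponents} rather than proving them ad hoc.

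Two points in your sketch need attention. First, invoking an \emph{ambient} isotopy of $X$ from a path in the configuration space of disk-bounding curves requires a parametrized isotopy-extension argument in the $C^0$ compact-open setting the paper works in; this is avoidable (as above) and you should avoid it. Second, when you match the outermost curves of $\vec{c}^{\,0}$ with those of $\vec{c}^{\,1}$, the matching must be chosen so that curves carrying isomorphic subtrees are paired, otherwise the inductive hypothesis does not apply inside each matched disk; this is achievable because any bijection between two unlabeled configurations can be realized by a path, but it should be stated. With the transport step supplied (by either of the two routes you name), your outside-in induction would be a valid alternative to the paper's inside-out one.
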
}
	
	We denote by $\UJ_T(X)$ the connected component of $\UJ_n(X)$ with nesting structure given by a tree $T$. Each of these connected components can be neatly simplified to a finite dimensional subspace, which we denote $\RUJ_T(X)$, where each component of a Jordan configuration is a round circle. Write $\Iso(X)$ for the group of isometries of $X$ and $\Con(\RR^2)$ for the group of conformal transformations of $\RR^2$.
	
	{\renewcommand*{\thetheorem}{\Alph{theorem}}
	\begin{theorem}\label{thrm:roundingcurvesinsurfaces}
		There is an $\Iso(X)$-equivariant strong deformation retraction of $\UJ_n(X)$ onto $\RUJ_n(X)$, equivariant with respect to $\Con(\RR^2)$ if $X=\RR^2$.
	\end{theorem}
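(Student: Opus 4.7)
The plan is to construct an explicit canonical deformation one curve at a time, aggregated via the nesting tree of Theorem~\ref{thrm:concomp}. The key ingredient is a canonical rounding procedure for a single disk-bounding simple closed curve, extracted from the CAT(0) geometry of the universal cover $\tilde X$ of $X$. Because every step uses only intrinsic data of $(X,\gamma)$, equivariance under $\Iso(X)$ and, when $X=\RR^2$, under $\Con(\RR^2)$ (whose elements carry round circles to round circles) should follow automatically once the construction is shown to be well-defined and continuous.

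For a single curve $\gamma \in \UJ_1(X)$ bounding a disk $D_\gamma$, I would lift to $\tilde X$, a CAT(0) Riemannian plane, obtaining $\tilde\gamma$ bounding a topological disk $\tilde D_\gamma$. In a CAT(0) space every compact set has a unique circumcenter, so let $c(\gamma)\in\tilde X$ be the circumcenter of $\tilde D_\gamma$ and $R(\gamma)>0$ its circumradius. Both assignments depend continuously and isometry-equivariantly on $\gamma$, and the canonical round target is $\rho(\gamma):=\partial B_{\tilde X}(c(\gamma),R(\gamma))$. Because $\exp_{c(\gamma)}$ is a global diffeomorphism on a CAT(0) surface, geodesic polar coordinates around $c(\gamma)$ are defined everywhere, and I would interpolate $\tilde\gamma$ to $\rho(\gamma)$ by a linear radial contraction: writing $\tilde\gamma(\theta)=\exp_{c(\gamma)}(r(\theta)v(\theta))$, deform along $\tilde\gamma_t(\theta):=\exp_{c(\gamma)}(((1-t)r(\theta)+t\,R(\gamma))\,v(\theta))$. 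A round circle already has $r\equiv R$, so $\RUJ_n(X)$ is fixed pointwise throughout, giving a strong deformation retraction.

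For a configuration $(\gamma_1,\dots,\gamma_n)\in\UJ_n(X)$ with nesting tree $T$, I would process curves level by level from outermost to innermost. Each rounding is carried out not just on the curve but as an ambient isotopy of its parent disk: the radial interpolation above extends naturally to a diffeomorphism of $\tilde D_\gamma$ onto $B(c(\gamma),R(\gamma))$ which carries $\tilde\gamma$ to $\rho(\gamma)$ and drags the still-unrounded interior children along continuously. This preserves disjointness automatically, and at the next stage each child sits inside a round parent disk, to which the single-curve construction applies again; iterating gives a homotopy in $\UJ_n(X)$ landing in $\RUJ_n(X)$ at time $1$.

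The main obstacle is that a generic disk-bounding curve need not be star-shaped with respect to its circumcenter --- a ``U''-shaped Jordan curve in $\RR^2$ is star-shaped from no interior point --- so the polar parametrization $\tilde\gamma(\theta)=\exp_{c(\gamma)}(r(\theta)v(\theta))$ may be multivalued or ill-defined. I would address this by prepending a canonical \emph{star-shaping} stage: continuously deform $\tilde D_\gamma$ to its radial hull $\{\exp_{c(\gamma)}(tv):0\le t\le r_{\max}(v)\}$ from $c(\gamma)$, where $r_{\max}(v)$ is the supremum of $t$ with $\exp_{c(\gamma)}(tv)\in\tilde D_\gamma$ and $v$ ranges over unit tangent vectors at $c(\gamma)$. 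Carrying out this star-shaping canonically, continuously, and isometry-equivariantly --- and then verifying joint continuity of the full piecewise construction in the configuration --- will be the most delicate parts of the proof; equivariance under the relevant symmetry group will then be immediate from the intrinsic nature of every ingredient.
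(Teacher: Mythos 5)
Your global strategy---lift to the universal cover, pick a canonical center, round level by level from outermost to innermost while dragging the nested children along by an ambient map of the parent disk---is exactly the architecture of the paper's proof. But the single-curve rounding step, which is where all the content lives, has two concrete failures. First, the circumcenter of a non-convex Jordan domain need not lie in the domain at all: for a thin horseshoe in $\RR^2$, the center of the smallest enclosing disk sits in the hole of the horseshoe, outside $D_\gamma$. So geodesic polar coordinates based at $c(\gamma)$ do not parametrize $\tilde\gamma$ even set-theoretically, $r_{\max}(v)$ can be zero or discontinuous in $v$, the ``radial hull'' need not be a Jordan domain, and the star-shaping stage---which you correctly identify as the delicate part and then defer---cannot even begin. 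The paper avoids this by invoking Belegradek--Ghomi's extension theorem to produce a continuous $\Iso(X)$- (resp.\ $\Con(\RR^2)$-) equivariant choice of \emph{interior} point agreeing with the center on round disks, and then sidesteps star-shapedness entirely: it parametrizes $D(f)$ by the Carath\'eodory/Riemann map $\gamma_{D,p,u}$ and rounds via the blow-up $h_{t,f,y}(x)=c+\bigl(\gamma_f(t\,\phi_y(1)(x))-c\bigr)/t$, whose $t\to 0$ limit is the linear map $(d\gamma_f)_o$; this lands on ellipses (convex curves), after which a separate convex-to-round contraction finishes.

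Second, your round target is the \emph{circumscribed} circle $\partial B(c(\gamma),R(\gamma))$, which contains $D_\gamma$ and in general protrudes far outside it; two disjoint sibling curves (two long thin parallel ellipses, say) have overlapping circumscribed disks, so the claim that disjointness is preserved automatically is false. One must instead contract to an \emph{inscribed} circle and pre-shrink each curve so that its entire deformation stays inside its own original domain $D(f_i)$---this is precisely the role of $\phi_y$ and of the quantity $\varphi_{f_i}^{-1}(r_{f_i})$ in the paper's planar argument, and of using the inradius $r_{f_i}$ rather than $R_{f_i}$ in the convex-to-round step. Until the star-shaping stage is replaced by an actual construction (the Riemann map being the standard one) and the target is changed to an inscribed circle with controlled containment, the proposal does not produce a homotopy through configurations.
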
}

	Our main result shows that $\UJ_T(X)$ is a classifying space.
	
	{\renewcommand*{\thetheorem}{\Alph{theorem}}
	\begin{theorem}\label{thrm:classifying}
		$\UJ_T(X)$ is aspherical, and its fundamental group is isomorphic to $$(\BAut(T(v_1))\times\cdots\times\BAut(T(v_m)))\rtimes B_m^{\Pi}(X).$$
	\end{theorem}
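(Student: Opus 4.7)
The plan is to proceed by induction on the number of non-root vertices of $T$, using Theorem~\ref{thrm:roundingcurvesinsurfaces} throughout to replace $\UJ_T(X)$ by its round model $\RUJ_T(X)$, which admits a clean fibration. The base case is a depth-one tree with $n$ leaves at the root: then $\RUJ_T(X)$ deformation retracts onto $\UConf_n(X)$, which is aspherical because $X$ is (a non-positively curved surface is a $K(\pi,1)$, and iterated Fadell--Neuwirth fibrations pass asphericity to configuration spaces), with fundamental group the surface braid group $B_n(X) = B_n^\Pi(X)$; each $\BAut$ factor is trivial because every subtree is a single vertex.

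For the inductive step, let $v_1,\dots,v_m$ be the children of the root of $T$, partitioned by the isomorphism type of the rooted subtree $T(v_i)$. Consider the forgetful map
$$p\colon \RUJ_T(X)\longrightarrow Y,$$
where $Y$ is the space of ordered-up-to-$\Pi$ tuples of $m$ pairwise disjoint round disks in $X$, each disk labeled by the isomorphism class of its attached subtree. An isotopy-extension or Palais slice argument should show that $p$ is a fiber bundle whose fiber over $(D_1,\dots,D_m)$ is $\prod_{i=1}^m \UJ_{T(v_i)}(D_i)\cong \prod_i \UJ_{T(v_i)}(\RR^2)$, the identification being by affine reparametrization of each round disk. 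The inductive hypothesis (applied inside each disk, using the $\Con(\RR^2)$-equivariance clause of Theorem~\ref{thrm:roundingcurvesinsurfaces}) gives that each factor is aspherical with fundamental group $\BAut(T(v_i))$. The base $Y$ deformation retracts onto the $\Pi$-partitioned configuration space of $m$ points in $X$, which is aspherical with fundamental group $B_m^\Pi(X)$ by the definition of the latter.

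The long exact sequence of $p$ then yields asphericity of $\RUJ_T(X)$ and a short exact sequence
$$1\longrightarrow \prod_{i=1}^m \BAut(T(v_i))\longrightarrow \pi_1\bigl(\RUJ_T(X)\bigr)\longrightarrow B_m^\Pi(X)\longrightarrow 1.$$
To split this sequence, I would fix once and for all a canonical round configuration of type $T(v_i)$ inside the unit disk for each isomorphism class, and define a section by lifting each braid to the loop in $\RUJ_T(X)$ that rigidly drags the chosen canonical subconfiguration along with its ambient disk via a continuously varying affine parametrization. The conjugation action of this section on $\prod_i \BAut(T(v_i))$ is exactly the permutation of factors within each $\Pi$-block induced by the underlying permutation of the braid, yielding the claimed semidirect product.

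The main obstacle is twofold: (a) verifying that $p$ is genuinely a fiber bundle, since the radii of the top-level disks vary continuously and one must produce local trivializations of the inner configuration space in a manner compatible with affine rescaling; this should follow from standard parametric isotopy-extension arguments but requires care. More conceptually, (b) one must check that the rigid-drag section induces \emph{precisely} the outer action appearing in the semidirect product, i.e., that braiding two disks whose interiors are in the chosen canonical form introduces no extra monodromy on the interior beyond the bare permutation of $\BAut$-factors. Step (b) is the heart of the matching between the topology of braiding in $X$ and the combinatorics packaged into $\BAut$, and is what should be established carefully to conclude.
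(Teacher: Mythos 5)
Your proposal is correct and follows essentially the same route as the paper: a fiber bundle obtained by forgetting the nested curves, with base the configurations of the outermost curves and fiber a product of planar Jordan configuration spaces indexed by the children of the root, followed by the long exact sequence and a splitting defined by dragging the nested configurations along via parametrizations of the outer disks. The paper resolves your obstacle (a) not by isotopy extension but by the continuously varying Carath\'eodory conformal parametrizations $\gamma_{f_i}$ of Lemma~\ref{lem:conformalmapping}, which give the explicit local trivialization and also make the section and its $\Sigma_m^{\Pi}$-action in your step (b) transparent; and it quotes the planar result of \cite{curry2024configurationspacescirclesplane} for the fiber (and uses the full base $\UJ_{\Lambda}(X)$, cutting down to $B_m^{\Pi}(X)$ via $\pi_0$ of the fiber) rather than running your induction, but these are cosmetic differences.
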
}

	Here $\BAut(T)$ is the ``braided" automorphism group of the tree $T$, $B_m^{\Pi}(X)$ is a subgroup of the surface braid group $B_m(X)$, and $T(v_i)$ are the subtrees of $T$ rooted at children of the root, $v_i$; see Section~\ref{sec:homotopygroups}. 
	
	Inspiration for this paper comes from \cite{curry2024configurationspacescirclesplane}, where Curry, Zaremsky, and the author, construct configuration spaces of circles in the plane as a generalization of configuration spaces of points. In \cite{curry2024configurationspacescirclesplane} we index the connected components of our spaces with isomorphism classes of finite rooted planar trees and show that these spaces are classifying spaces of the ``braided" automorphism group of these trees. In the current paper, we study simple closed curves that are not necessarily round, construct a $\Con(\RR^2)$-equivariant strong deformation retraction onto the spaces of study in \cite{curry2024configurationspacescirclesplane}, and consider simple closed curves in surfaces other than the plane. 
	
	Braiding a group $G$ is an idea from geometric group theory where factors of $G$ isomorphic to a subgroup of a symmetric group, $\Sigma_m$, are replaced with a subgroup of the braid group $B_m(\RR^2)$ corresponding through the natural projection $B_m(\RR^2)\to\Sigma_m$ that forgets the under/over crossing of strands and records the permutations of starting and ending points of the strands. Naive examples of braiding groups include braiding $\Sigma_m$ to get $B_m(\RR^2)$ and braiding $\Sigma_m\times\Sigma_n$ to get $B_m(\RR^2)\times B_n(\RR^2)$. More intriguing examples include braiding Thompson groups \cite{brin07,dehornoy06,brady08,funar08,funar11,bux16,witzel19,spahn}; braided Houghton groups \cite{degenhardt00,genevois22}; closely related to the groups in this paper are braided automorphism groups of an infinite, locally finite, regular tree \cite{aroca22,Cumplido01122024,skipper23,skipper24}, in fact methods in this paper may generalize to infinite, locally finite, not necessarily regular trees; and braided versions of self-similar groups and R\"over-Nekroshevych groups \cite{skipper24}.
	In this paper, we braid the automorphism groups of finite rooted trees associated to the nesting order of Jordan configurations in connected components of $\UJ_n(X)$ such that one braided factor is a subgroup of the surface braid group $B_m(X)$ and the factor is a subgroup of the usual braid group $B_{n-m}(\RR^2)$.
	
	Simple closed curves in surfaces are useful in understanding elements of the mapping class group of surfaces; in fact the groups in Theorem~\ref{thrm:classifying} appear in \cite[Lemma~3.1]{kordek2022homomorphisms} as elements of a mapping class group that stabilize Jordan configurations nesting marked points in a surface.
	
	As a remark, in higher dimensions the story becomes more complicated due to problems including curves linking together. A common technique to better understand these spaces is to deformation retract the space of simple closed curves to round curves and compute the homotopy type of this subspace. Unlinked curves in $\RR^3$ have been studied in \cite{brendle2013configuration} and have the homotopy type of a finite-dimensional manifold. The space of a single unknotted simple closed curve in $S^3$ has been studied in \cite{hatcher1983proof} to have the homotopy type of the space of great circles in $S^3$. Progress has been made in understanding the space of linked curves in three dimensions as well; in particular, in \cite{damiani2019group} the fundamental group of the space of round Hopf links was computed to be the Quaternion group $\QQ_8$ and in \cite{boyd2025embeddinghopflink} the homotopy type of the entire space of embeddings of the Hopf link in $\RR^3$ and $S^3$ was computed to be $S^3/\QQ_8$ and $\RR P^2\times\RR P^2$ respectively.

\textbf{Organization.}
In Section 2 we define our spaces of interest and collect initial facts about them.
In Section 3 we construct trees to associate to Jordan configurations in surfaces.
In Section 4 we prove our main results involving deformation retractions between our various spaces of interest.
Finally, in Section 5 we construct a fiber bundle to compute the homotopy groups of the spaces of Jordan configurations in surfaces where the fundamental group is a ``braided" automorphism group of our trees; we also show that these spaces have the homotopy type of a CW-complex and hence, are classifying spaces.

\textbf{Acknowledgments.} Thanks are due to Justin Curry, Marco Varisco, and Matthew Zaremsky for many helpful discussions and suggestions.

	
	\section{The space of Jordan configurations}\label{sec:Jordef}
	\subsection{Definitions and Notations}
	
	In this section we construct our spaces of interest, show that configurations of circles from \cite{curry2024configurationspacescirclesplane} are subspaces of Jordan configurations, and recall foundational tools to be used in future sections.
	
	\begin{definition}[Configuration space of points]\label{dfn:pointconfspace}
		Let $X$ be a topological space. The \emph{labeled configuration space of $n$ points in $X$} is defined to be the subspace of $X^n$ such that each pair of entries $x_i$ and $x_j$ with $1\leq i\neq j\leq n$ are distinct, denoted $\Conf_n(*,X)$. 
		The symmetric group on $n$ elements, $\Sigma_n$, acts on $\Conf_n(*,X)$ by permuting coordinates. The orbit space $\UConf_n(*,X):=\Conf_n(*,X)/\Sigma_n$ is called the \emph{(unlabeled) configuration space of $n$ points in $X$}.
	\end{definition}
	
	\begin{definition}[Configuration space of circles]\label{dfn:circleconfspace}
		Let $\Conf_n(S^1,\RR^2)$ be the subspace of $\left(\RR^3\right)^n$ such that for each pair of entries $(x_i,y_i,r_i)$ and $(x_j,y_j,r_j)$ with $1\leq i\neq j\leq n$ we have $r_i,r_j>0$ and either:
		\begin{center}
			$(r_i-r_j)^2-((x_i-x_j)^2+(y_i-y_j)^2)>0$ \hfil or\hfil $(r_i+r_j)^2-((x_i-x_j)^2+(y_i-y_j)^2)<0$.
		\end{center}
		We call this space the \emph{labeled configuration space of $n$ circles in the plane}. The symmetric group on $n$ elements, $\Sigma_n$, acts on $\Conf_n(S^1,\RR^2)$ by permuting coordinates. The orbit space $\UConf_n(S^1,\RR^2):=\Conf_n(S^1,\RR^2)/\Sigma_n$ is called the \emph{(unlabeled) configuration space of $n$ circles in the plane}, where the elements of $\UConf_n(S^1,\RR^2)$ are sets $\{(x_1,y_1,r_1),\dots,(x_n,y_n,r_n)\}$ where each pair of elements of the set satisfies the above inequalities.
	\end{definition}
	
	In this definition, the $x_i$ and $y_i$ coordinates give the center of a circle and $r_i$ gives the radius of a circle. The inequalities are derived from the known conditions for a pair of circles to intersect,
	$$(r_i-r_j)^2\leq(x_i-x_j)^2+(y_i-y_j)^2\leq(r_i+r_j)^2,$$
	so $\Conf_n(S^1,\RR^2)$ and $\UConf_n(S^1,\RR^2)$ are the collections of $n$-tuples and $n$-element sets of pairwise disjoint circles.
	
	The primary difference between the configuration spaces of points and configuration spaces of circles is that circles must be \emph{disjoint} rather than merely \emph{distinct}. 
	The impact this has on the configuration spaces of circles is that for a given circle, $C_i=(x_i,y_i,r_i)$, another circle, $C_j=(x_j,y_j,r_j)$, may lie in the unbounded region surrounding $C_i$ or the region bounded by $C_i$. 
	When $C_j$ lies in the region bounded by $C_i$ we say $C_j$ is \emph{nested} in $C_i$ and $C_i$ \emph{nests} $C_j$. If either $C_i$ or $C_j$ nest the other, we have $(r_i-r_j)^2-((x_i-x_j)^2+(y_i-y_j)^2)>0$. If $C_i$ and $C_j$ each lie in the unbounded regions surrounding each other, then $(r_i+r_j)^2-((x_i-x_j)^2+(y_i-y_j)^2)<0$.
	
	For the rest of the paper, let $X$ be a connected Riemannian surface of non-positive curvature. This choice is due to the fact that we will only consider curves that bound a disk and we want this disk to be unique; this does not happen when the surface has positive curvature. We will assume that any choice $X$ comes with a chosen metric, sometimes denoted $d_X$. Recall that an \emph{embedding} is a function that is a homeomorphism onto its image.
	
	\begin{definition}[Embeddings of circles]\label{dfn:embofcircles}
		Define $\Emb_n(S^1,X)$ to be the topological space whose elements are ordered $n$-tuples $(f_1,\dots,f_n)$ of injective maps $f_i:S^1\to X$ with pairwise disjoint images. Since $S^1$ is compact and $X$ is Hausdorff, elements of $\Emb_n(S^1,X)$ are $n$-tuples of embeddings. The topology on $\Emb_n(S^1,X)$ comes from realizing this space as a subspace of $\left(\Emb(S^1,X)\right)^n$, where $\Emb(S^1,X)$ is the space of all embeddings of a circle into $X$ with the compact-open topology. We call $\Emb_n(S^1,X)$ the \textit{labeled embedding space of $n$ circles in $X$}. There is an action of the symmetric group $\Sigma_n$ on $\Emb_n(S^1,X)$ given by permuting the entries in the tuples, we call the orbit space the \textit{(unlabeled) embedding space of $n$ circles in $X$}, and denote it $\UEmb_n(S^1,X)$. We can view the elements of $\UEmb_n(S^1,X)$ as (unordered) sets $\{f_1,\dots,f_n\}$ of $n$ embeddings of the circle in $X$ with pairwise disjoint images.
	\end{definition}
	
	An alternate definition of $\Emb_n(S^1,X)$ is $\Emb\left(\sqcup^n S^1,X \right)$ with the compact-open topology. 
	
	We are more interested in the geometry of these curves, so we pass to unparameterized curves by quotienting out by the action of precomposing with homeomorphisms of the circle. 
	The orbit space $\Q_n(X):=\Emb_n(S^1,X)/(\Homeo(S^1))^n$ is the space of $n$-tuples of equivalence classes $([f_1],\dots,[f_n])$ with $(f_1,\dots,f_n)$ an element of $\Emb_n(S^1,X)$. Since the action of $(\Homeo(S^1))^n$ on $\Emb_n(S^1,X)$ does not permute the entries, we can pass to the orbit space $\UQ_n(X):=\Q_n(X)/\Sigma_n$ where elements are sets of $n$ equivalence classes $\{[f_1],\dots,[f_n]\}$ where $([f_1],\dots,[f_n])$ is an element of $\Q_n(X)$.
	Furthermore, in this paper we will only consider curves that bound disks in the following sense.
	
	\begin{definition}
		Let $B^2$ be the closed unit disk in the plane centered at the origin $o=(0,0)$. We say $D\subset X$ is a \emph{Jordan domain} if it forms the image of a continuous injective map $B^2\to X$, called a \emph{parametrization} of $D$. Note that since $B^2$ is compact, these continuous injective maps are embeddings. We write $\Emb(B^2,X)$ for the space of embeddings of $B^2$ in $X$. The \emph{space of Jordan domains} $\mathcal{D}(X)$ is the collection of all Jordan domains in $X$ with the topology induced by their parametrizations, that is, $\Emb(B^2,X)$ with the compact-open topology modulo maps with the same image.
	\end{definition}
	
	\begin{definition}[Jordan configurations]\label{dfn:jrdncurves}
		Consider the subspace of $\Q_n(X)$ such that each entry of an $n$-tuple is the boundary of an element of $\mathcal{D}(X)$, we call this the \textit{labeled configuration space of $n$ Jordan curves in $X$}, we call each element a \emph{labeled Jordan configuration}, and denote the space $\J_n(X)$. We call the image of $\J_n(X)$ in $\UQ_n(X)$ the \textit{(unlabeled) configuration space of $n$ Jordan curves in $X$}, we denote it $\UJ_n(X)$, and we call each element a \emph{Jordan configuration}.
	\end{definition}
	We get a commutative diagram, Figure~\ref{fig:comdiaemb}. In the case of the plane, the Jordan curve theorem assures $\UQ_n(\RR^2)=\UJ_n(\RR^2)$.
	\begin{figure}[h]
	\centering
	\begin{tikzcd}
		\Emb_n(S^1,X)\arrow{r}\arrow{d}&\Q_n(X)\arrow{d}&\arrow[hook']{l}\J_n(X) \arrow{d} \arrow[hook]{r} & (\mathcal{D}(X))^n \arrow{d} & \arrow{l} (\Emb(B^2,X))^n\arrow{d}\\
		\UEmb_n(S^1,X)\arrow{r}&\UQ_n(X)&\arrow[hook']{l}\UJ_n(X) \arrow[hook]{r} & (\mathcal{D}(X))^n/\Sigma_n & \arrow{l}(\Emb(B^2,X))^n/\Sigma_n\
	\end{tikzcd}
	\caption{}\label{fig:comdiaemb}
	\end{figure}

	\subsection{Initial Facts}
	
	Recall a function, $\varphi:X\to Y$, between topological spaces $X$ and $Y$ is \emph{sequentially continuous} if for every convergent sequence $(x_n)\to x$ in $X$ we have $(\varphi(x_n))$ converges to $\varphi(x)$. Further, a topological space $X$ is \emph{sequential} if every function, $\varphi$, from $X$ to a topological space $Y$, $\varphi$ is continuous if and only if $\varphi$ is sequentially continuous. For example, every metric space is sequential.
	
	\begin{lemma}\label{lem:sequential}
		$\J_n(X)$, and $\UJ_n(X)$ are sequential.
	\end{lemma}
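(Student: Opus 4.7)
The plan is to realize $\J_n(X)$ and $\UJ_n(X)$ as combinations of open subspaces and quotients of a metric space, then invoke the standard facts that metric spaces are sequential, open subspaces of sequential spaces are sequential, and quotients of sequential spaces are sequential.

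First I would note that $X$, being a Riemannian manifold, carries the metric $d_X$. Since $S^1$ is compact, the compact-open topology on $C(S^1, X)$ coincides with the topology induced by the sup-metric
\[ d_\infty(f, g) \;=\; \max_{x \in S^1} d_X(f(x), g(x)). \]
Hence $\Emb_n(S^1, X) \subseteq C(S^1, X)^n$ is a metric subspace under the product sup-metric, so it is sequential, and therefore the quotient $\Q_n(X) = \Emb_n(S^1, X)/(\Homeo(S^1))^n$ is sequential as well.

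The key step is to show that $\J_n(X)$ is an \emph{open} subspace of $\Q_n(X)$. Letting $q : \Emb_n(S^1, X) \to \Q_n(X)$ denote the quotient map, this reduces to proving that
\[ E \;:=\; q^{-1}(\J_n(X)) \;=\; \{(f_1, \ldots, f_n) \in \Emb_n(S^1, X) : \text{each } f_i(S^1) \text{ bounds a disk in } X\} \]
is open in $\Emb_n(S^1, X)$. In a surface, a simple closed curve bounds a disk if and only if it is null-homotopic, so it suffices to verify that null-homotopy of each component is preserved under small $d_\infty$-perturbations. For $g_i$ close enough to $f_i$ in the sup-metric, a straight-line homotopy through convex normal neighborhoods of $X$ (available at each point of any Riemannian manifold) produces $f_i \simeq g_i$, so $g_i$ inherits null-homotopy from $f_i$, giving openness.

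Given this openness, $\J_n(X)$ is sequential as an open subspace of the sequential space $\Q_n(X)$, and $\UJ_n(X) = \J_n(X)/\Sigma_n$ is sequential as a quotient of a sequential space by the action of a finite group. The main obstacle I expect is the openness of $E$, which packages the surface-topological equivalence between ``bounds a disk'' and ``is null-homotopic'' for simple closed curves (resting on the classification of surfaces and the Jordan--Schoenflies theorem) with the elementary stability of homotopy class under sup-metric perturbations.
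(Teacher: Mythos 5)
Your proof is correct, but the final step is handled by a genuinely different mechanism than the paper's. Both arguments begin the same way: the compact-open topology on $\Emb(S^1,X)$ coincides with the sup-metric topology because $S^1$ is compact, so $\Emb_n(S^1,X)$ is metrizable and its quotient $\Q_n(X)$ is sequential. The divergence is in how one passes from $\Q_n(X)$ to the subspace $\J_n(X)$ --- a point that genuinely needs an argument, since an arbitrary subspace of a sequential space need not be sequential. The paper sidesteps the subspace inclusion entirely: it also metrizes $\Emb(B^2,X)$ and treats $\J_n(X)$ and $\UJ_n(X)$ through the disk-parametrization side of its commutative diagram, so that everything in sight is a quotient of a metrizable space; the subspace issue is left implicit. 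You instead prove that $\J_n(X)$ is \emph{open} in $\Q_n(X)$, by combining the equivalence, for simple closed curves in a surface, between bounding an embedded disk and being null-homotopic with the stability of free homotopy class under small sup-metric perturbations (geodesic homotopies in uniformly totally normal neighborhoods over the compact image), and then invoke the fact that open subspaces of sequential spaces are sequential. This is a valid and arguably more self-contained closing of the gap, and the openness of $\J_n(X)$ in $\Q_n(X)$ is a useful fact in its own right; the price is that you import a nontrivial piece of surface topology (a null-homotopic embedded circle bounds an embedded disk, i.e.\ Epstein's theorem / Jordan--Schoenflies in the universal cover), which the paper's route avoids. One small point worth making explicit in your last sentence: $\UJ_n(X)$ is defined as a subspace of $\UQ_n(X)$, so to view it as the quotient $\J_n(X)/\Sigma_n$ you should note that $\J_n(X)$ is a $\Sigma_n$-saturated open set and the projection $\Q_n(X)\to\UQ_n(X)$ is an open map; alternatively, $\UJ_n(X)$ is then itself open in the sequential space $\UQ_n(X)$, which finishes the argument directly.
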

	\begin{proof}
		$\Emb(S^1,X)$ and $\Emb(B^2,X)$ are endowed with the compact-open topology and since $S^1$ and $B^2$ are compact and $X$ is a Riemann surface, Proposition~\cite[A.13]{hatcher02} shows that the metric topology defined by the metric $d_{\Emb}(f,g):=\sup_{x\in S^1} d_{X}(f(x),g(x))$, where $d_{X}$ is any fixed metric on $X$ 
		coincides with the compact-open topology on $\Emb(S^1,X)$ and $\Emb(B^2,X)$. Since $\Emb_n(S^1,X)$ is a subspace of $\left(\Emb(S^1,X)\right)^n$, it too is metrizable. $\Q_n(X)$, $(\mathcal{D}(X))^n$, $\UQ_n(X)$, and $(\mathcal{D}(X))^n/\Sigma_n$ are quotients of $\Emb_n(S^1,X)$ and $(\Emb(B^2,X))^n$ and hence are sequential by Corollary~\cite[1.14]{Franklin1965}. Therefore $\J_n(X)$ and $\UJ_n(X)$ are sequential.
	\end{proof}
	
	\begin{definition}[Round Jordan configurations]\label{dfn:rndjrdncrves}
		We define a curve in $\J_1(X)$ to be \emph{round} if its image is equidistant to a point in $X$. Let $\RJ_n(X)$ be the subspace of $\J_n(X)$ where each coordinate is round, we call this space the \textit{labeled configuration space of round Jordan curves in $X$}. Define $\RUJ_n(X)$ to be the image of $\RJ_n(X)$ as the subspace of $\UJ_n(X)$ called the \textit{(unlabeled) configuration space of round Jordan curves in $X$}. 
	\end{definition}
	
	In fact, for $X=\RR^2$ with the standard euclidean metric, these are the labeled and unlabeled configuration spaces of circles in the plane from~\cite{curry2024configurationspacescirclesplane}.
	
	\begin{theorem}\label{thm:embeddingconfspaces}
		$\Conf_n(S^1,\RR^2)$ and $\UConf_n(S^1,\RR^2)$ embed into $\J_n(\RR^2)$ and $\UJ_n(\RR^2)$ respectively as $\RJ_n(\RR^2)$ and $\RUJ_n(\RR^2)$.
	\end{theorem}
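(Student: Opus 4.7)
The plan is to write down the obvious map
\[ \Phi: \Conf_n(S^1,\RR^2)\to \J_n(\RR^2),\qquad ((x_i,y_i,r_i))_{i=1}^n\mapsto ([f_i])_{i=1}^n, \]
where $f_i(\theta)=(x_i+r_i\cos\theta,\,y_i+r_i\sin\theta)$, and then show that (a) it lands inside $\RJ_n(\RR^2)$, (b) it is a continuous bijection onto $\RJ_n(\RR^2)$, and (c) its inverse is continuous. The unlabeled statement will follow by passing to $\Sigma_n$-quotients, since $\Phi$ is manifestly $\Sigma_n$-equivariant, and all of the relevant quotient maps are open (for unions of orbits) and $\RUJ_n(\RR^2)$ is defined as the image of $\RJ_n(\RR^2)$.

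First I would verify that $\Phi$ lands in $\RJ_n(\RR^2)$. Each $f_i$ is a smooth embedding whose image is the round circle of center $(x_i,y_i)$ and radius $r_i$, which by the Jordan curve theorem bounds a disk in $\RR^2$, so $[f_i]\in\J_1(\RR^2)$ and is round in the sense of Definition~\ref{dfn:rndjrdncrves}. The two inequalities in Definition~\ref{dfn:circleconfspace} translate exactly into the standard conditions ``$C_j$ lies in the bounded disk cut out by $C_i$ (or vice versa)'' and ``$C_i$ and $C_j$ are externally disjoint,'' so the $n$ images are pairwise disjoint. Continuity of $\Phi$ into $\J_n(\RR^2)$ is straightforward: if $(x_i,y_i,r_i)$ perturbs slightly, then the corresponding $f_i$ perturbs uniformly in $\theta$, hence in the sup metric identified with the compact-open topology in the proof of Lemma~\ref{lem:sequential}; composition with the quotient $\Emb_n(S^1,\RR^2)\to\J_n(\RR^2)$ preserves this continuity.

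Injectivity of $\Phi$ is immediate: the center and radius of a round circle are determined by the set-theoretic image, and elements of $\J_n$ remember the ordered tuple of images. Surjectivity onto $\RJ_n(\RR^2)$ follows from the definition of ``round'': any round Jordan curve is equidistant to some unique point $c\in\RR^2$ at some unique distance $r>0$, and hence it agrees, as an unparameterized curve, with $f(\theta)=c+(r\cos\theta,r\sin\theta)$. The disjointness of the tuple of images forces the tuple of triples $((x_i,y_i,r_i))$ into $\Conf_n(S^1,\RR^2)$ by reversing the translation of inequalities above.

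The main obstacle, and the only genuinely analytic step, is the continuity of the inverse $\Phi^{-1}:\RJ_n(\RR^2)\to \Conf_n(S^1,\RR^2)$, i.e.\ one must recover $(x_i,y_i,r_i)$ continuously from $[f_i]$. I would do this one coordinate at a time: if $[f^{(k)}]\to [f]$ in $\J_1(\RR^2)$ with $[f^{(k)}]$ round of center $c^{(k)}$ and radius $r^{(k)}$, choose representatives $g^{(k)}\to g$ in $\Emb(S^1,\RR^2)$ (using that the $\Homeo(S^1)$-quotient map admits local sections when restricted to the round subspace, or more robustly, that convergence in the quotient implies Hausdorff convergence of images). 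Then $r^{(k)}=\tfrac12\operatorname{diam}(g^{(k)}(S^1))\to\tfrac12\operatorname{diam}(g(S^1))=r$, since the diameter is continuous under Hausdorff convergence of compacta, and $c^{(k)}$ is recovered as the centroid (or equivalently, the midpoint of any diameter) of the image, which likewise depends continuously on the image. Since $\Phi^{-1}$ is a continuous map from the sequential space $\RJ_n(\RR^2)$ (Lemma~\ref{lem:sequential}) into $\Conf_n(S^1,\RR^2)$, sequential continuity suffices, completing the homeomorphism and thus the embedding onto $\RJ_n(\RR^2)$. Taking $\Sigma_n$-quotients gives the unlabeled statement.
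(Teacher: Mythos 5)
Your proposal writes down the same map as the paper and follows the same overall skeleton (land in the round subspace, injectivity, surjectivity onto $\RJ_n(\RR^2)$, $\Sigma_n$-equivariance for the unlabeled case), but it establishes the key point --- that the map is a homeomorphism onto its image --- by a genuinely different mechanism. The paper reduces to $n=1$ and computes explicitly that $(x,y,r)\mapsto(r\cos t+x,\,r\sin t+y)$ is an \emph{isometry} from $\Conf_1(S^1,\RR^2)$, with the metric $d_{\RR^2}((x,y),(x',y'))+|r-r'|$, into $\Emb(S^1,\RR^2)$ with the sup metric, and then passes through the $\Homeo(S^1)$-quotient. You instead prove continuity of $\Phi^{-1}$ directly: lift a convergent sequence in the quotient (legitimate, since the quotient map by a group action is open and $\Emb(S^1,\RR^2)$ is first countable), deduce Hausdorff convergence of the images, and recover $r$ as half the diameter and $c$ as the midpoint of a diameter, both continuous in the Hausdorff metric. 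Both routes are correct. The paper's computation is more self-contained and quantitative (zero distortion), while your argument is softer but more explicit about the one delicate step --- why continuity of the inverse survives the quotient by $\Homeo(S^1)$ --- which the paper compresses into the assertion that the restricted quotient map is ``injective and hence an embedding.'' One point to make explicit in your version: sequential continuity suffices for $\Phi^{-1}$ because $\RJ_n(\RR^2)$ is a \emph{closed} subspace of the sequential space $\J_n(\RR^2)$ (a Hausdorff limit of round circles is a round circle or a point, and points are excluded), since arbitrary subspaces of sequential spaces need not be sequential.
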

	\begin{proof}
		Define $\Psi:\Conf_n(S^1,\RR^2)\to \J_n(\RR^2)$ to be the map taking $((x_1,y_1,r_1),\dots,(x_n,y_n,r_n))$ to
		$$\left([(r_1\cos(t)+x_1,r_1\sin(t)+y_1)],\dots,[(r_n\cos(t)+x_n,r_n\sin(t)+y_n)]\right).$$ 
		The map $\Psi$ is $\Sigma_n$-equivariant and so induces a map $\overline{\Psi}:\UConf_n(S^1,\RR^2)\to \J_n(\RR^2)$. 
		Both maps are injective since circles and round (labeled) Jordan configurations are uniquely determined by their centers and radii. It is sufficient to show $\Psi$ is an embedding for $n=1$.
		
		As seen in the proof of Lemma \ref{lem:sequential}, $\Emb(S^1,\RR^2)$ is metrizable with the metric $d_{\Emb}(f,g):=\sup_{t\in S^1} d_{\RR^2}(f(t),g(t))$ induced by the euclidean metric, $d_{\RR^2}$. For $d_{\Conf_n(S^1,\RR^2)}$ we choose the sum of the two dimensional euclidean metric for the centers and the one dimensional euclidean metric for the radii, that is 
		$$d_{\Conf_n(S^1,\RR^2)}((a,b,c),(a',b',c')):=d_{\RR^2}((a,b),(a',b'))+|c-c'|.$$
		Consider the map $\hat{\Psi}:\Conf_1(S^1,\RR^2)\to\Emb(S^1,\RR^2)$ defined by $(x,y,r)\mapsto(r\cos(t)+x,r\sin(t)+y)$, the following calculation of $d_{\Emb}(\hat{\Psi}(x,y,r),\hat{\Psi}(x',y',r'))$ shows $\hat{\Psi}$ preserves distances and hence is an embedding:
		\begin{align*}
			&\sup_{t\in S^1} \left[d_{\RR^2}((r'\cos(t)+x',r'\sin(t)+y'),(r\cos(t)+x,r\sin(t)+y))\right]\\
			&=\sup_{t\in S^1}\sqrt{((r'\cos(t)+x')-(r\cos(t)+x))^2+((r'\sin(t)+y')-(r\sin(t)+y))^2}\\
			&=\sup_{t\in S^1}\sqrt{((r'-r)\cos(t)+(x'-x))^2+((r'-r)\sin(t)+(y'-y))^2}\\
			&=\sup_{t\in S^1}\sqrt{(r'-r)^2+2(r'-r)(\cos(t)(x'-x)+\sin(t)(y'-y))+((x'-x)^2+(y'-y)^2)}\\
			&=\sqrt{(r'-r)^2+2|r'-r|\sqrt{(x'-x)^2+(y'-y)^2}+(x'-x)^2+(y'-y)^2}\\
			&=|r'-r|+\sqrt{(x'-x)^2+(y'-y)^2}.
		\end{align*}
		The quotient map $\Emb(S^1,\RR^2)\to \J_1(\RR^2)$ restricted to the image of the isometry $\hat{\Psi}$ is injective and hence an embedding. Post-composing the isometry $\hat{\Psi}$ with this embedding is the embedding $\Psi$.
	\end{proof}
	
	One last fact we recall is a strengthening of the uniformization theorem.
	
	\begin{theorem}[Killing-Hopf Theorem\cite{hopf2001clifford,killing1891ueber}]\label{killhopfthrm}
		Let $X$ be a complete connected Riemannian surface with constant curvature. Then $X$ is isometric to a quotient of the Euclidean plane, hyperbolic plane, or Riemann sphere by a discontinuous, fixed point free group of isometries. We write $\widetilde{X}$ for this universal cover.
	\end{theorem}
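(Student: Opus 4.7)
The plan is to pass to the universal cover $\widetilde{X}$, identify it isometrically with the unique simply connected, complete Riemannian surface of the appropriate constant curvature, and then express $X$ as the quotient by the deck transformation group.

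First, I would lift the Riemannian metric to $\widetilde{X}$ so that the covering projection $\pi \colon \widetilde{X} \to X$ becomes a local isometry; this lifted metric is complete (completeness lifts under Riemannian coverings via Hopf--Rinow) and, since sectional curvature is a local invariant, it has the same constant curvature $\kappa$ as $X$. The deck transformation group $\Gamma \cong \pi_1(X)$ then acts on $\widetilde{X}$ by isometries, the action is properly discontinuous and free because $\pi$ is a covering map, and the induced map $\widetilde{X}/\Gamma \to X$ is a Riemannian isometry. This already delivers the quotient structure required by the statement, provided we can identify $\widetilde{X}$ with one of the three model spaces.

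Second, I would identify $\widetilde{X}$ with a model space $M_\kappa \in \{S^2, \RR^2, \HH^2\}$ (after rescaling $\kappa$ to $+1$, $0$, or $-1$). Choose a point $p \in \widetilde{X}$, a corresponding point $\tilde p \in M_\kappa$, and a linear isometry $L \colon T_p \widetilde{X} \to T_{\tilde p} M_\kappa$. Because both surfaces have the same constant curvature, $L$ automatically intertwines the curvature tensors at $p$ and $\tilde p$; the Cartan--Ambrose--Hicks theorem, applied along broken geodesics emanating from $p$, then extends $L$ to a global isometry $\widetilde{X} \to M_\kappa$.

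The main obstacle is the positive curvature case. When $\kappa \leq 0$, Cartan--Hadamard guarantees that $\exp_p$ is a diffeomorphism and one can pull back the model metric along $\exp_p$ directly to match the given one, bypassing the full Ambrose--Hicks machinery. When $\kappa > 0$, however, $\widetilde{X}$ must be compact and geodesics develop conjugate points, so no single exponential chart covers $\widetilde{X}$; the hardest step is showing that all unit-speed geodesics out of $p$ reconverge at a common antipodal point at distance $\pi/\sqrt{\kappa}$, so that one can patch the exponential charts at $p$ and at its antipode to construct the isometry with $S^2$.
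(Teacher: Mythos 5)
The paper does not prove this statement: it is quoted as the classical Killing--Hopf theorem with references to Killing and Hopf, so there is no in-paper argument to compare against. Your sketch is the standard proof and is essentially correct: lift the metric so that $\pi\colon\widetilde{X}\to X$ is a local isometry, note that completeness and constant curvature lift, observe that the deck group acts freely and properly discontinuously by isometries with $\widetilde{X}/\Gamma\cong X$, and identify $\widetilde{X}$ with the simply connected model $M_\kappa$ via Cartan--Ambrose--Hicks (the curvature hypothesis of which is automatic in constant curvature, since the curvature tensor is $R(X,Y)Z=\kappa(\langle Y,Z\rangle X-\langle X,Z\rangle Y)$ and hence preserved by every linear isometry).

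Two small points worth tightening if you were to write this out in full. First, the Cartan--Ambrose--Hicks step produces a priori only a local isometry $\widetilde{X}\to M_\kappa$ (well-defined because $\widetilde{X}$ is simply connected); you then need the standard fact that a local isometry from a complete manifold is a Riemannian covering, and that a covering of the simply connected $M_\kappa$ is a global isometry. Second, in the case $\kappa>0$ your "patch two exponential charts at antipodes" alternative does work, but the reconvergence of geodesics at distance $\pi/\sqrt{\kappa}$ should be justified by the explicit Jacobi field $J(t)=\frac{\sin(\sqrt{\kappa}\,t)}{\sqrt{\kappa}}E(t)$ in constant curvature, which forces $d(\exp_p)$ to be singular exactly on the sphere of radius $\pi/\sqrt{\kappa}$ and collapses that sphere to a point; as you note, invoking Cartan--Ambrose--Hicks uniformly avoids this case analysis altogether. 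Neither issue is a gap in the intended argument, only in the level of detail.
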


	
	\subsection{Equivariant centers and parametrizations of their domains}
	
	Let $f=\{[f_1],\dots,[f_n]\}$ be a Jordan configuration in $X$.
	Since $X$ is not a sphere, we can associate to any curve, $f_i$, two possibly equal subsets of $X$: a simply connected domain bounded by $f_i$, call this $D(f_i)$; and the connected component of $X\setminus f$ whose interior non-trivially intersects $D(f_i)$ and has $f_i$ as a boundary component, call it $R(f_i)$.  
	The interior of a set $D$ is denoted $\interior(D)$. We call $\interior(D(f_i))$ the \emph{interior of the curve} $f_i$. 
	
	\begin{lemma}[Carath\'eodory's Theorem{\cite{caratheodory1913gegenseitige}} {\cite[Thm.~5.1.1]{krantz2006geometric}} {\cite[Thm.~2.11]{pommerenke2013boundary}}]\label{lem:conformalmapping}
		Let $\widetilde{X}$ be $\RR^2$, $\HH^2$, or $S^2$. 
		For any domain $D\in \mathcal{D}(\widetilde{X})$, point $p\in \interior(D)$, and a non-trivial vector $u$ in the tangent plane of $\widetilde{X}$ at $p$, denoted $T_p\widetilde{X}$, there exists a unique continuous map $\gamma_D = \gamma_{D,p,u}\in \Emb(B^2,\widetilde{X})$ with $\gamma_D(B^2) = D$, such that $\gamma_D$ is conformal on $\interior(B^2)$, $\gamma_D(o) = p$, and the derivative of $\gamma_D$ evaluated at $o$ in the direction of $(1,0)$, denoted $d(\gamma_D)_o((1, 0))$, is parallel to $u$. Furthermore, if $D_i\in \mathcal{D}(\widetilde{X})$ is a sequence of domains containing $p$ and converging to $D$, then $\gamma_{D_i,p,u}$ converges to $\gamma_D$.
	\end{lemma}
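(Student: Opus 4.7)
The plan is to assemble three classical pieces: the Riemann mapping theorem on the appropriate model space, the Carath\'eodory boundary extension theorem (which is precisely why the Jordan-domain hypothesis $D \in \mathcal{D}(\widetilde{X})$ matters), and the Carath\'eodory kernel convergence theorem. For each of $\widetilde{X} \in \{\RR^2, \HH^2, S^2\}$ I would first reduce to working on the open unit disk in $\mathbb{C}$ via a fixed conformal identification (stereographic projection for $S^2$, the Poincar\'e disk model for $\HH^2$, or the identity for $\RR^2$); this transports the problem faithfully because compositions of conformal maps are conformal.

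For existence, the Riemann mapping theorem produces a conformal bijection $\varphi \colon \interior(B^2) \to \interior(D)$, where in the $S^2$ case one first removes a point of $S^2 \setminus D$ to land in $\mathbb{C}$ so that $\interior(D)$ becomes a simply connected proper open subset. Because $\partial D$ is a Jordan curve, Carath\'eodory's boundary extension theorem extends $\varphi$ to a homeomorphism $\bar\varphi \colon B^2 \to D$. The conformal automorphism group of $\interior(B^2)$ is three-real-dimensional, so precomposing $\bar\varphi$ with a suitable automorphism can arrange both $\bar\varphi(o) = p$ and $d(\bar\varphi)_o((1,0)) \parallel u$, yielding the desired $\gamma_D$. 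For uniqueness, any two maps $\gamma, \gamma'$ satisfying the stated conditions differ by a conformal automorphism of $\interior(B^2)$ that fixes $o$ and preserves the direction $(1,0)$; the only such automorphism is the identity, so $\gamma = \gamma'$.

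For the convergence statement, I would invoke the Carath\'eodory kernel convergence theorem: once one checks that the $D_i$ converge to $D$ as kernels with respect to $p$ (which follows from the convergence of their parametrizations together with the common interior point $p$), the normalized maps $\gamma_{D_i, p, u}$ converge locally uniformly on $\interior(B^2)$ to $\gamma_{D, p, u}$. The main obstacle, which I expect to be the hard part, is upgrading this interior convergence to uniform convergence on all of $B^2$ — this is what the ambient sup-metric topology on $\Emb(B^2, \widetilde{X})$ demands. I would attack this using the fact that the boundary curves $\partial D_i$ converge uniformly to $\partial D$, together with a boundary-equicontinuity argument for the family $\{\bar\varphi_i\}$ of extended homeomorphisms, leveraging Pommerenke's continuity results for conformal welding under Hausdorff convergence of Jordan curves.
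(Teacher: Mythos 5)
Your outline is correct and matches the paper's treatment: the paper states this lemma purely as a citation of the classical ingredients you assemble (Riemann mapping theorem, Carath\'eodory boundary extension for Jordan domains, normalization by the three-real-parameter automorphism group of the disk fixing $o$ and the derivative direction, and a convergence theorem for the normalized maps), so there is no independent argument in the paper to diverge from. The one adjustment I would make is to your final step: the uniform-on-$B^2$ convergence you flag as the hard part is not a conformal-welding statement but is exactly \cite[Thm.~2.11]{pommerenke2013boundary} (uniform convergence on the closed disk of suitably normalized Riemann maps when the Jordan boundary curves converge in the Fr\'echet sense), which is one of the references cited in the lemma and applies directly here because the topology on $\mathcal{D}(\widetilde{X})$ is defined via uniformly convergent parametrizations.
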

	
	\begin{figure}[htb]
		\centering
		\begin{tikzpicture}[line width=1pt]
			
			\draw[line width=2,use Hobby shortcut,closed=true, scale=1.3]
			(-1.8,-.2) .. (-1.6,.5) .. (-1,.9) .. 
			(-.3,1) .. (1,.7) .. (2,.2) .. (2.22,0) ..
			(2.42,-.45) .. (2.25,-1) ..
			(1.8,-1.18) .. (1.2,-1) ..
			(0,-.8) ..
			(-.9,-.9) .. (-1.5,-.7);
			\draw[use Hobby shortcut,closed=true, scale=.8,xshift=6.5, yshift=3.5]
			(-1.8,-.2) .. (-1.6,.5) .. (-1,.9) .. 
			(-.3,1) .. (1,.7) .. (2,.2) .. (2.22,0) ..
			(2.42,-.45) .. (2.25,-1) ..
			(1.8,-1.18) .. (1.2,-1) ..
			(0,-.8) ..
			(-.9,-.9) .. (-1.5,-.7);
			\draw[use Hobby shortcut,closed=true, scale=.4,xshift=30, yshift=12]
			(-1.8,-.2) .. (-1.6,.5) .. (-1,.9) .. 
			(-.3,1) .. (1,.7) .. (2,.2) .. (2.22,0) ..
			(2.42,-.45) .. (2.25,-1) ..
			(1.8,-1.18) .. (1.2,-1) ..
			(0,-.8) ..
			(-.9,-.9) .. (-1.5,-.7);
			
			\draw[use Hobby shortcut,closed=false, scale=1.3]
			(2.4,-.7) .. (.5,0.1) .. (-1.8,-0.1);
			
			\draw[use Hobby shortcut,closed=false, scale=1.3]
			(1.8,-1.17) .. (.5,0.1) .. (-1.4,.7);
			
			\draw[use Hobby shortcut,closed=false, scale=1.3]
			(1.6,.45) .. (.5,0.1) .. (-1.15,-.85);
			
			\draw[use Hobby shortcut,closed=false, scale=1.3]
			(.3,-.8) .. (.5,0.1) .. (.3,.92);
			
			\node at (-1.5,1.5) {$[f]$};
			\node at (0.45,0.35) {$p$};
			\filldraw (.645,0.13) circle (2.5pt);
			
		\end{tikzpicture}
		\caption{Conformal parametrization of the interior of a curve $[f]$ with center $p$ given by Carath\'eodory's Theorem.}
		\label{fig:parameterized_domain}
	\end{figure}
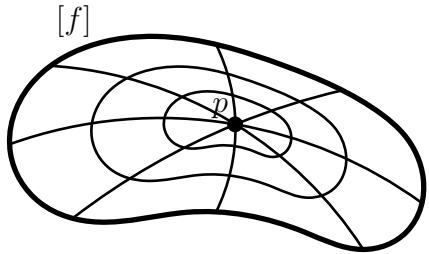
	
	Any sequence of domains containing a point $p$ lift to a unique sequence of domains in $\widetilde{X}$ containing a chosen point $\widetilde{p}$ in the fiber of $p$. Each pair of points in the fiber of $p$ relates by an isometry of $\widetilde{X}$ that sends their corresponding sequence of domains to each other and preserves their image in $X$, the original sequence of domains. An isometry of $\widetilde{X}$ that preserves the images of the domains will send the parametrization of each domain in a sequence to the parametrization of a domain in another sequence, modulo the rotation of the parametrizations. Hence, the above result holds for $\J_1(X)$ when $p\in \interior(D(f_i))$ for $[f_i]$ a curve.
	The last statement of the lemma implies the map, $\J_1(X)\to\Emb(B^2,X)$ defined by $[f_i]\mapsto \gamma_{D(f_i),p,u}$ is continuous since $\J_1(X)$ is sequential by Lemma~\ref{lem:sequential}. 
	
	\begin{definition}[A center of a Jordan domain]\label{dfn:center}
		Define a \textit{center on a set $A\subseteq \mathcal{D}(X)$} to be a continuous map $c$ from $A$ to $X$ such that $c(D)\in \interior(D)$ for all $D\in A$.
	\end{definition}
	
	Let $\Con(X)$ be the group of conformal transformations of $X$ and $\Iso(X)$ be the group of isometries of $X$. When $X=\HH^2$ it is well known that $\Con(X)=\Iso(X)$. 
	
	\begin{theorem}[{\cite[Thm.~1.1]{belegradek2025point}}]\label{thrm:extequcenter}
		For $M$ a connected Riemannian surface and $G\leq\Iso(M)$, any $G$-equivariant center on a closed $G$-invariant subspace of domains in $M$ can be extended to a $G$-equivariant center on $\mathcal{D}(M)$.
	\end{theorem}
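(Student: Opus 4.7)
The plan is to view a center as a continuous section of the multifunction $\Phi\colon\mathcal{D}(M)\to 2^M$, $\Phi(D)=\interior(D)$, and to extend the given $G$-equivariant section $c_A$ on $A$ to all of $\mathcal{D}(M)$ by patching local constant sections via a $G$-invariant partition of unity, carrying out the averaging inside the canonical disk model supplied by Lemma~\ref{lem:conformalmapping}.

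First I would verify that $\Phi$ is lower semicontinuous with open, nonempty, contractible fibers: if $D_n\to D$ in $\mathcal{D}(M)$ then any compact $K\subseteq \interior(D)$ is eventually contained in $\interior(D_n)$. Hence for each $D_0\in\mathcal{D}(M)\setminus A$ I can pick $p_0\in\interior(D_0)$ and a neighborhood $U_{D_0}$ on which the constant assignment $s_{D_0}\equiv p_0$ is a valid center, and I can extend $c_A$ to a continuous (not yet equivariant) center on a $G$-invariant open neighborhood $W\supseteq A$, using that $c_A(D)$ remains interior to nearby domains. Since $G\leq\Iso(M)$ acts properly on the metrizable space $\mathcal{D}(M)$, there is a $G$-invariant locally finite partition of unity $\{\rho_\alpha\}$ subordinate to a $G$-invariant refinement of $\{W\}\cup\{g\cdot U_{D_0}\}_{g,D_0}$, arranged so that the $W$-function is identically $1$ on $A$. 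Fixing (non-equivariantly) an auxiliary base point $q(D)\in\interior(D)$ and direction $u(D)\in T_{q(D)}M$, I set $\gamma_D:=\gamma_{D,q(D),u(D)}$ and define
\[
c(D):=\gamma_D\!\left(\sum_\alpha \rho_\alpha(D)\,\gamma_D^{-1}\bigl(s_\alpha(D)\bigr)\right),
\]
so that the averaging takes place in $B^2$ rather than in $M$.

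The main obstacle is $G$-equivariance of $c$, because the auxiliary data $(q(D),u(D))$ is not canonical and only transforms as $\gamma_{g\cdot D,\,g\cdot q,\,dg(u)}=g\circ\gamma_{D,q,u}$ up to a rotation of $B^2$. I would resolve this by averaging the construction over the compact group of rotations of $B^2$, killing the dependence on $u(D)$, and then symmetrizing over the compact stabilizers of the proper $G$-action to kill any residual orbit-dependence in $q(D)$. The matching condition $c|_A=c_A$ is automatic, since $\rho_W\equiv 1$ on $A$ collapses the formula to $c(D)=\gamma_D(\gamma_D^{-1}(c_A(D)))=c_A(D)$, and global continuity follows from local finiteness of the partition together with the continuity of $\gamma_D$ and $\gamma_D^{-1}$ in Lemma~\ref{lem:conformalmapping}. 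The truly delicate step is the joint continuity of both averaging operations; it is plausible that the cited proof in \cite{belegradek2025point} instead applies an equivariant continuous selection theorem for proper $G$-actions on lower semicontinuous ANR-valued multifunctions directly, bypassing the explicit formula.
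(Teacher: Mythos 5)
The paper does not actually prove this statement: it is imported verbatim from Belegradek--Ghomi \cite{belegradek2025point} and used as a black box, so there is no internal proof to compare your sketch against. Judged on its own, your outline has a genuine gap at its central step, the averaging formula $c(D)=\gamma_D\bigl(\sum_\alpha \rho_\alpha(D)\,\gamma_D^{-1}(s_\alpha(D))\bigr)$. This expression is in fact already independent of the direction $u(D)$ — changing $u$ replaces $\gamma_D$ by $\gamma_D\circ R$ for a rotation $R$ of $B^2$, and rotations are linear, hence commute with Euclidean convex combinations — so your rotation-averaging is unnecessary. But the expression genuinely depends on the base point $q(D)$: changing $q$ replaces $\gamma_D$ by $\gamma_D\circ\mu$ for a M\"obius automorphism $\mu$ of $B^2$, and M\"obius maps do \emph{not} preserve Euclidean convex combinations. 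Consequently $c(gD)=g(c(D))$ forces $q(gD)=g(q(D))$, i.e.\ a continuous $G$-equivariant interior-point selection — which is exactly the theorem being proved. Neither of your proposed repairs touches this: the obstruction lives along entire orbits rather than in the isotropy groups, and ``symmetrizing'' a finite or compact family of points of $\interior(D)$ again presupposes a barycentric structure that $M$ and $\interior(D)$ do not canonically carry.

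There are secondary gaps as well. A globally continuous choice of $q(D)\in\interior(D)$ is itself a (non-equivariant) center on all of $\mathcal{D}(M)$, whose existence is part of what is at stake and cannot simply be ``fixed.'' An arbitrary $G\leq\Iso(M)$ need not act properly on $\mathcal{D}(M)$ (take $G$ a dense subgroup of the rotations fixing a point of $\RR^2$), so the $G$-invariant partition of unity requires first replacing $G$ by its closure in $\Iso(M)$, which you should justify. Finally, the selection-theoretic fallback you mention does not apply off the shelf: the values $\interior(D)$ are open but not convex, and $\mathcal{D}(M)$ is infinite-dimensional, which rules out the standard Michael-type theorems. A workable repair of the averaging step is to perform the combination $\sum_\alpha\rho_\alpha(D)\,s_\alpha(D)$ not in $B^2$ with its Euclidean affine structure but in $\interior(D)$ equipped with its intrinsic hyperbolic metric from uniformization, using the barycenter of the weighted point measure in this complete CAT$(0)$ space; that construction is natural under conformal maps of the domain, hence independent of every choice of $\gamma_D$ and automatically equivariant once the $\rho_\alpha$ and $s_\alpha$ are.
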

	
	In fact, if $M=\RR^2$, then $G$ can include more transformations than isometries.
	
	\begin{theorem}[{\cite[Thm.~3.10]{belegradek2025point}}]\label{thrm:extequcenterplane}
		Let $A\subset \mathcal{D}(\RR^2)$ be a closed $\Con(\RR^2)$-invariant set.
		Then any $\Con(\RR^2)$-equivariant center on $A$ may be extended to a $\Con(\RR^2)$-equivariant center on $\mathcal{D}(\RR^2)$.
	\end{theorem}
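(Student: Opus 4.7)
The plan is to bootstrap from Theorem~\ref{thrm:extequcenter} and then enlarge the equivariance by hand. Since $\Con(\RR^2) \cong \RR^2 \rtimes (\RR_{>0} \times O(2))$ while $\Iso(\RR^2) \cong \RR^2 \rtimes O(2)$, the only symmetries missing from $\Iso(\RR^2)$ are generated by the one-parameter family of dilations $h_\lambda \colon z \mapsto \lambda z$ about the origin, so it suffices to produce a center which is simultaneously $\Iso(\RR^2)$-equivariant and $\{h_\lambda\}_{\lambda>0}$-equivariant.

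First I would observe that a $\Con(\RR^2)$-equivariant center on $A$ is in particular $\Iso(\RR^2)$-equivariant, and apply Theorem~\ref{thrm:extequcenter} to obtain an $\Iso(\RR^2)$-equivariant extension $c_0 \colon \mathcal{D}(\RR^2) \to \RR^2$. Next I would choose a continuous scale function $s \colon \mathcal{D}(\RR^2) \to \RR_{>0}$ that is $\Iso(\RR^2)$-invariant and dilation-homogeneous of degree one, i.e. $s(h_\lambda D) = \lambda s(D)$; the diameter $s(D) = \sup_{x,y \in D} d_{\RR^2}(x,y)$ is the natural candidate. With $s$ in hand I would propose the extension
\[ c(D) := s(D)\, c_0\!\left( h_{1/s(D)}\, D \right), \]
the intuition being that one rescales $D$ to a size-one representative of its dilation orbit, applies the isometry-equivariant center, and rescales back.

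The remaining verifications should be routine. Continuity follows from that of $s$ and $c_0$; membership $c(D) \in \interior(D)$ follows because $c_0(h_{1/s(D)} D) \in \interior(h_{1/s(D)} D)$ and $h_{s(D)}$ carries interiors to interiors; $h_\mu$-equivariance is immediate from $s(h_\mu D) = \mu s(D)$; and equivariance under an isometry $g(z) = rz + a$ follows from the commutation $h_{1/s(D)} \circ g = (t_{a/s(D)} \circ r) \circ h_{1/s(D)}$ combined with $\Iso(\RR^2)$-equivariance of $c_0$ and the linearity of $r$. To see that $c$ restricts to the original center on $A$, note that $h_{1/s(D)} D \in A$ by $\Con(\RR^2)$-invariance, so $c_0$ and the original center agree there, and the dilation-equivariance of the original center on $A$ then collapses the scaling factors to yield $c(D) = c_{\mathrm{orig}}(D)$.

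The only genuine point of care I anticipate is producing a scale function $s$ with all three properties. Diameter satisfies the two algebraic conditions by inspection, and its continuity on $\mathcal{D}(\RR^2)$ follows from the fact that uniform convergence of parametrizations implies Hausdorff convergence of images, under which diameter is continuous. Once $s$ is in place the argument proceeds uniformly across all of $\mathcal{D}(\RR^2)$, so no partition-of-unity or averaging machinery is required beyond what is already built into Theorem~\ref{thrm:extequcenter}.
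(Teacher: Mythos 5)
This statement appears in the paper only as a quoted external result (Belegradek--Ghomi, Thm.~3.10), so there is no internal proof to compare against; you have supplied a genuine proof by reduction to Theorem~\ref{thrm:extequcenter}, and the reduction is sound. Your decomposition of $\Con(\RR^2)$ into isometries and dilations about the origin matches the paper's own description of this group (see the proof of Lemma~\ref{lem:existingcenter}), and the normalization $c(D):=s(D)\,c_0\bigl(h_{1/s(D)}D\bigr)$ with $s=\operatorname{diam}$ does everything you claim: for $g(z)=\mu Rz+a$ one has $s(gD)=\mu s(D)$ and $h_{1/s(gD)}(gD)=\bigl(t_{a/(\mu s(D))}\circ R\bigr)\bigl(h_{1/s(D)}D\bigr)$, so $\Iso(\RR^2)$-equivariance of $c_0$ together with linearity of $R$ gives $c(gD)=g(c(D))$; membership $c(D)\in\interior(D)$ follows since $h_{s(D)}$ carries $\interior\bigl(h_{1/s(D)}D\bigr)$ onto $\interior(D)$; and on $A$ the $\Con(\RR^2)$-invariance of $A$ and dilation-equivariance of the original center collapse the scaling factors exactly as you say. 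The one point worth writing out is continuity: $s$ and $D\mapsto h_{1/s(D)}D$ are defined on the quotient $\Emb(B^2,\RR^2)/\!\sim$, so you should verify continuity of the lifts $\gamma\mapsto\operatorname{diam}(\gamma(B^2))$ and $\gamma\mapsto h_{1/\operatorname{diam}(\gamma(B^2))}\circ\gamma$ in the sup metric; this is immediate because the Hausdorff distance between images is bounded by the sup distance between parametrizations and $\operatorname{diam}$ is $2$-Lipschitz in Hausdorff distance, while $\operatorname{diam}(D)>0$ for every Jordan domain. With that addition your argument is complete and, compared with the black-box citation, has the advantage of deriving the conformal case directly from the isometric case already stated as Theorem~\ref{thrm:extequcenter}.
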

	
	\begin{definition}[Round Jordan domains]\label{dfn:rndjrdndomain}
		Define a Jordan domain $D\in \mathcal{D}(X)$ to be \textit{round} if there is a point $c_D\in D$ which has constant distance from all points of $\partial D$. Let $R\mathcal{D}(X)\subset \mathcal{D}(X)$ denote the \textit{space of round Jordan domains in $X$}. 
	\end{definition}
	
	\begin{lemma}\label{lem:existingcenter}
		The map $c:R\mathcal{D}(X)\to X$ defined by $c(D)=c_D$ is an $\Iso(X)$-equivariant center on the closed $\Iso(X)$-invariant set $R\mathcal{D}(X)$. When $X=\RR^2$, this is a $\Con(X)$-equivariant center on the closed $\Con(X)$-invariant set $R\mathcal{D}(X)$.
	\end{lemma}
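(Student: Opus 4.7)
My plan is to verify four claims in order: $\Iso(X)$-invariance of $R\mathcal{D}(X)$, $\Iso(X)$-equivariance of $c$ together with the condition $c(D)\in\interior(D)$, closedness of $R\mathcal{D}(X)$ with continuity of $c$, and finally the $\Con(\RR^2)$-analogue when $X=\RR^2$.

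For invariance and equivariance, I would take any $\phi\in\Iso(X)$ and any round $D$ with center $c_D$ and radius $r_D$; distance preservation gives $d(\phi(c_D),\phi(x))=r_D$ for every $x\in\partial D$, so $\phi(D)$ is round with center $\phi(c_D)$. This simultaneously establishes $\Iso(X)$-invariance of $R\mathcal{D}(X)$ and the identity $c(\phi\cdot D)=\phi\cdot c(D)$. The condition $c(D)\in\interior(D)$ is immediate: $c_D\in D$ sits at positive distance $r_D$ from $\partial D$.

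For continuity and closedness I would argue sequentially, justified by (the proof of) Lemma~\ref{lem:sequential}. Suppose $D_n\to D$ in $\mathcal{D}(X)$ with each $D_n\in R\mathcal{D}(X)$, and choose parametrizations $\gamma_n\to\gamma$ converging uniformly (for example by applying Lemma~\ref{lem:conformalmapping} based at a fixed interior point common to $D$ and to $D_n$ for large $n$). The centers $c_{D_n}\in D_n$ lie in a fixed compact neighborhood of $D$, so some subsequence satisfies $c_{D_{n_k}}\to c^{*}$. For each $p\in\partial B^2$, the points $x_{n_k}:=\gamma_{n_k}(p)\to\gamma(p)=:x\in\partial D$ satisfy $d(c_{D_{n_k}},x_{n_k})=r_{D_{n_k}}$ uniformly in $p$. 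Passing to the limit, $d(c^{*},x)$ is constant in $x\in\partial D$, so $D$ is round with center $c^{*}$; this proves closedness of $R\mathcal{D}(X)$. Uniqueness of the center of $D$ then forces $c^{*}=c_D$, and since every convergent subsequence of $(c_{D_n})$ has limit $c_D$, we conclude $c_{D_n}\to c_D$, proving continuity.

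For $X=\RR^2$: by Liouville's theorem (applied to injective entire holomorphic and anti-holomorphic self-maps of $\RR^2$), $\Con(\RR^2)$ is the similarity group — compositions of isometries and positive dilations. Dilations manifestly send round disks to round disks and Euclidean centers to Euclidean centers, so the argument above extends verbatim to yield $\Con(\RR^2)$-invariance of $R\mathcal{D}(\RR^2)$ and $\Con(\RR^2)$-equivariance of $c$. The principal obstacle throughout is the uniqueness of the center $c_D$, which is the only step genuinely invoking the non-positive curvature hypothesis; I plan to handle it by lifting $D$ to the Cartan--Hadamard universal cover $\widetilde{X}$ and invoking convexity of the distance function to conclude that two candidate centers must coincide.
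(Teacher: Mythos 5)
Your proof is correct and follows essentially the same route as the paper's: isometries (and, for $\RR^2$, similarities) carry round domains to round domains and centers to centers, and a limit of round domains in $\mathcal{D}(X)$ is round with converging centers. The paper states these facts in three sentences; you supply the details it leaves implicit, notably the sequential-compactness argument for closedness/continuity and the uniqueness of $c_D$ via convexity of the distance function on the Cartan--Hadamard cover, which is needed for $c$ to be well defined at all.
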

	\begin{proof}
		Since the limit of a sequence of circles is either a circle or a point and a point is not a Jordan domain, $R\mathcal{D}(X)$ is a closed set. When the limit of a sequence of circles is a circle, the corresponding sequence of centers converges to the center of the limit circle, so $c$ is a continuous map. Isometries send centers of circles to centers of circles.
		
		The group $\Con(\RR^2)$ is generated by affine maps, i.e. rotations, uniform scaling \cite{b09efa93-f6bd-3c98-9d77-3cadda8bfa53}, and translations which preserve circles and centers of circles, so the lemma is true.
	\end{proof}
	
	\begin{corollary}\label{cor:existcenter}
		There exist centers on $X$ that are $\Iso(X)$-equivariant and coincide with the canonical center of circles; these centers are $\Con(X)$-equivariant if $X=\RR^2$.
	\end{corollary}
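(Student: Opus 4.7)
The plan is to observe that this corollary follows almost immediately by combining the extension theorems from Belegradek with the existence of a canonical center on the subspace of round Jordan domains established in Lemma~\ref{lem:existingcenter}.

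First, I would invoke Lemma~\ref{lem:existingcenter}, which supplies an $\Iso(X)$-equivariant center $c$ on the closed $\Iso(X)$-invariant subspace $R\mathcal{D}(X) \subseteq \mathcal{D}(X)$, sending each round Jordan domain $D$ to its metric center $c_D$. Then I would apply Theorem~\ref{thrm:extequcenter} with $M = X$ and $G = \Iso(X)$ to extend $c$ to an $\Iso(X)$-equivariant center $\widetilde{c}$ on all of $\mathcal{D}(X)$. By construction, $\widetilde{c}$ restricts to $c$ on $R\mathcal{D}(X)$, so it agrees with the canonical metric center on round Jordan domains, which is exactly what the corollary asserts.

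For the case $X = \RR^2$, the argument is identical except that I would use the stronger extension result Theorem~\ref{thrm:extequcenterplane}: Lemma~\ref{lem:existingcenter} already tells us that the canonical center is $\Con(\RR^2)$-equivariant on the closed $\Con(\RR^2)$-invariant set $R\mathcal{D}(\RR^2)$, so Theorem~\ref{thrm:extequcenterplane} extends it to a $\Con(\RR^2)$-equivariant center on $\mathcal{D}(\RR^2)$.

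There is no genuine obstacle here — the corollary is essentially a bookkeeping statement that packages Lemma~\ref{lem:existingcenter} and Theorems~\ref{thrm:extequcenter}~and~\ref{thrm:extequcenterplane} into a single usable form. The only thing worth checking carefully is that the hypotheses of the extension theorems are met, namely closedness and $G$-invariance of $R\mathcal{D}(X)$, both of which are recorded in Lemma~\ref{lem:existingcenter}.
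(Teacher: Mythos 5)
Your proposal is correct and is essentially the paper's own argument: the paper's proof reads ``This is immediate from Theorem~\ref{thrm:extequcenterplane} and Lemma~\ref{lem:existingcenter}.'' If anything, you are slightly more careful than the paper, which cites only the planar extension theorem even though the general $\Iso(X)$ case requires Theorem~\ref{thrm:extequcenter} as well, exactly as you note.
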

	\begin{proof}
		This is immediate from Theorem~\ref{thrm:extequcenterplane} and Lemma~\ref{lem:existingcenter}.
	\end{proof}
	
	\section{Trees and path connected components}
	
		When considering paths between Jordan configurations it is immediately evident that, since we have a Jordan configuration at every moment of this path, no curve may become nested or un-nested at any moment of the path. By the Jordan curve theorem, a curve $[f_i]$ splits the plane into two regions, if curve $[f_j]$ lies in one of these regions $[f_j]$ must pass through $[f_i]$ to get to the other region which is impossible since curves in a Jordan configuration are pairwise disjoint. This phenomena tells us for $n\geq2$ we have a disconnected space with one connected component homotopy equivalent to a configuration space of points. The case of configuration spaces of circles in the plane was captured in \cite{curry2024configurationspacescirclesplane}.
	
	\begin{proposition}[\cite{curry2024configurationspacescirclesplane}]\label{prop:hetoconfofpoints}
		The connected components of $\Conf_n(S^1,\RR^2)$ and $\UConf_n(S^1,\RR^2)$ where no circles are nested are homotopy equivalent to $\Conf_n(*,\RR^2)$ and $\UConf_n(*,\RR^2)$, respectively.
	\end{proposition}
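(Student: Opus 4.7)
The plan is to exhibit the projection to centers as a homotopy equivalence from the non-nested component onto the configuration space of points, with an explicit section and a straight-line homotopy in the radius coordinates. Let $C\subset\Conf_n(S^1,\RR^2)$ denote the subspace consisting of tuples $((x_i,y_i,r_i))_i$ satisfying the non-nested inequality $(r_i+r_j)^2<(x_i-x_j)^2+(y_i-y_j)^2$ for every pair $i\neq j$; equivalently, $r_i+r_j<d_{ij}$ where $d_{ij}:=\sqrt{(x_i-x_j)^2+(y_i-y_j)^2}$. Forgetting the radius gives a continuous map $p\colon C\to\Conf_n(*,\RR^2)$, and I would prove that $p$ is a homotopy equivalence.

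First I would build a section. Define $\rho\colon\Conf_n(*,\RR^2)\to(0,\infty)$ by
\[
\rho\big((x_i,y_i)_i\big)\;=\;\tfrac{1}{3}\min_{i\neq j}d_{ij},
\]
which is continuous and strictly positive on $\Conf_n(*,\RR^2)$. Let $s\colon\Conf_n(*,\RR^2)\to C$ send $(x_i,y_i)_i$ to $(x_i,y_i,\rho((x_k,y_k)_k))_i$. Since each assigned radius equals the same $\rho$, we have $r_i+r_j=2\rho\leq \tfrac{2}{3}d_{ij}<d_{ij}$, so $s$ lands in $C$, and clearly $p\circ s=\id$.

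Next I would show $s\circ p\simeq\id_C$ via a linear homotopy in the radii,
\[
H\big((x_i,y_i,r_i)_i,\,t\big)\;=\;\Big(x_i,\,y_i,\,(1-t)r_i+t\rho\big((x_k,y_k)_k\big)\Big)_i,\qquad t\in[0,1].
\]
To see that $H$ stays in $C$, observe that for every pair $i\neq j$ the two quantities $r_i+r_j$ and $2\rho$ are both strictly less than $d_{ij}$ — the first by the hypothesis that the starting configuration lies in $C$, the second by the choice of the constant $1/3$ — so their convex combination
\[
(1-t)(r_i+r_j)+t(2\rho)\;=\;\big((1-t)r_i+t\rho\big)+\big((1-t)r_j+t\rho\big)
\]
is again strictly less than $d_{ij}$. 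Hence $H_t\in C$ for all $t$, $H_0=\id_C$, and $H_1=s\circ p$, proving that $p$ is a homotopy equivalence. The only real technical point — and the step I would single out as the substantive one — is exactly this verification that the straight-line homotopy of radii never leaves the non-nested locus; the choice of $1/3$ (any constant strictly less than $1/2$ suffices) is precisely what makes the convexity estimate work.

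Finally, the unlabeled case follows because every object above is $\Sigma_n$-equivariant: $\rho$ is a symmetric function of the points, and $p$, $s$, and $H$ are built by applying the same recipe to each coordinate. They therefore descend to the quotients and produce the analogous homotopy equivalence between the non-nested component of $\UConf_n(S^1,\RR^2)$ and $\UConf_n(*,\RR^2)$.
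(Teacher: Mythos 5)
Your proof is correct and is essentially the standard argument (and the one the paper itself uses in the analogous Corollary~\ref{cor:hebetweenconfpointandnonnestedcomponents}, down to the radius $\tfrac{1}{3}\min_{i\neq j}d_{ij}$): project to centers, section by a uniformly small radius, and interpolate radii linearly, with the convexity estimate guaranteeing the non-nested inequalities persist. The $\Sigma_n$-equivariance argument for the unlabeled case is also fine.
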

	
	To visualize the collection of connected components of this space we associate each component with a tree.
	
	\begin{definition}[Tree]\label{dfn:trees}
		In this paper, a \emph{tree} is a connected graph that is \begin{enumerate*}
			\item finite,
			\item directed,
			\item rooted with the root having zero incoming edges and any finite number of outgoing edges,
			\item non-root vertices have a single incoming edge and any finite number of outgoing edges,
			\item no cycles.
		\end{enumerate*}
		A \emph{labeled tree} is a tree along with a bijective function, $f$, from $\{1,\dots,n\}$ to the non-root vertices, often we say vertex $i$ instead of $f(i)$.
	\end{definition}
		
		We call the terminal vertex of an edge a \emph{child} of the initial vertex of the edge, and the collection of all terminal vertices of all edges with the same initial point the \emph{children} of the initial vertex. When we fix a left to right planar ordering of the children of each vertex we get a \emph{planar (labeled) tree} and we can define an isomorphism and understand equality. An \emph{isomorphism of planar trees} is a bijection of vertex and edge sets where the direction of the edges are preserved, and labels of vertices are preserved for an isomorphism of labeled trees. Note an isomorphism of (labeled) trees need not preserve the ordering of the children of a vertex; in fact, an isomorphism that preserves the planar ordering is unique, this is our sense of equality.
	
	\begin{figure}[htb]
		\centering
		\begin{tikzpicture}[line width=1pt]
			\draw (0,0) -- (0.5,-0.5) -- (1.5,0.5)   (1,0) -- (.5,.5);
			
			\filldraw (0,0) circle (1.5pt);
			\filldraw (0.5,-0.5) circle (2.5pt);
			\filldraw (1.5,0.5) circle (1.5pt);
			\filldraw (1,0) circle (1.5pt);
			\filldraw (0.5,0.5) circle (1.5pt);
			
			\node at (-0.3,-0.1) {$1$};
			\node at (1.3,-0.1) {$2$};
			\node at (0.2,0.4) {$3$};
			\node at (1.7,0.5) {$4$};
			
			\node at (0.5,1.5) {$T_1$};
			
			\begin{scope}[xshift=6cm]
				\draw (0,0) -- (-0.5,-0.5) -- (-1.5,.5)	(-1,0) -- (-0.5,0.5);
				
				\filldraw (0,0) circle (1.5pt);
				\filldraw (-0.5,-0.5) circle (2.5pt);
				\filldraw (-1,0) circle (1.5pt);
				\filldraw (-0.5,0.5) circle (1.5pt);
				\filldraw (-1.5,0.5) circle (1.5pt);
				
				\node at (0.3,-0.1) {$1$};
				\node at (-1.3,-0.1) {$2$};
				\node at (-1.8,0.4) {$3$};
				\node at (-0.3,0.5) {$4$};
				
				\node at (-0.5,1.5) {$T_2$};
			\end{scope}
			
			\begin{scope}[xshift=11cm]
				\draw (0,0) -- (-0.5,-0.5) -- (-1.5,.5) 			(-1,0) -- (-0.5,0.5);
				
				\filldraw (0,0) circle (1.5pt);
				\filldraw (-0.5,-0.5) circle (2.5pt);
				\filldraw (-1,0) circle (1.5pt);
				\filldraw (-0.5,0.5) circle (1.5pt);
				\filldraw (-1.5,0.5) circle (1.5pt);
				
				\node at (0.3,-0.1) {$2$};
				\node at (-1.3,-0.1) {$1$};
				\node at (-1.8,0.4) {$4$};
				\node at (-0.3,0.5) {$3$};
				
				\node at (-0.5,1.5) {$T_3$};
			\end{scope}
			
		\end{tikzpicture}
		\caption{Three labeled trees, $T_1$, $T_2$, and $T_3$, where edges are directed upwards. As labeled trees, $T_1$ and $T_2$ are equal, and $T_3$ is not equal to either of them. The underlying unlabeled trees of $T_1$, $T_2$, and $T_3$ are equal. As planar labeled trees, $T_1$ and $T_2$ are isomorphic but not equal, and $T_3$ is not isomorphic to either of them. The underlying planar unlabeled tree of $T_3$ is equal to that of $T_2$, and is isomorphic but not equal to that of $T_1$.}
		\label{fig:iso_nonequal_labeled_config_tree}
	\end{figure}
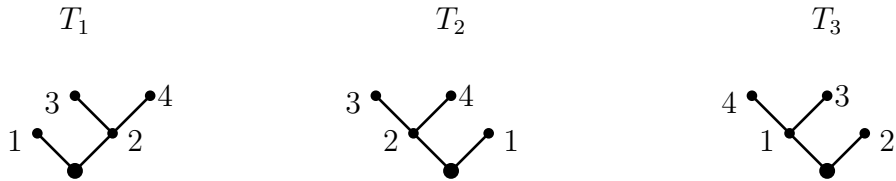
	
	Next we recall the correspondence between planar (labeled) trees and (labeled) configurations of circles in the plane that was set up in \cite{curry2024configurationspacescirclesplane}. Here planar trees are used to capture the lexicographical ordering of the centers of the circles.
	Let $\kappa=(C_1,\dots,C_n)$ be an element of $\Conf_n(S^1,\RR^2)$. Define the \emph{planar labeled tree} of $\kappa$, denoted $T_{\kappa}$, to be such that a vertex $j$ is a child of a vertex $i$ if and only if $C_j$ is nested by $C_i$ and no other $C_k$ simultaneously nests $C_j$ and is nested by $C_i$, and a vertex $i$ is a child of the root, $\varnothing$, if and only if $C_i$ is not nested inside any other $C_k$. The unlabeled situation is handled by giving a configuration of circles an arbitrary labeling, constructing a planar labeled tree as before, and finally forgetting the arbitrary labeling.
	In the other direction, to construct a fixed (labeled) configuration corresponding to a (labeled) tree we need to be more precise with the placement of circles such that the planar ordering of the vertices of the tree corresponds to the lexicographical ordering of the centers of circles. From this correspondence, constructed in detail in \cite{curry2024configurationspacescirclesplane}, we achieve our goal of indexing the connected components of the labeled and unlabeled configuration spaces of circles in the plane.
	
	\begin{proposition}[Curry, Gelnett, Zaremsky, \cite{curry2024configurationspacescirclesplane}]\label{prop:components}
		Let $T$ and $U$ be planar trees with $n$ non-root vertices. Then the configurations corresponding to $T$ and $U$ lie in the same connected component of $\UConf_n(S^1,\RR^2)$ if and only if $T\cong U$. Moreover, there exists a planar tree for every connected component of $\Conf_n(S^1,\RR^2)$ such that the corresponding configuration is contained in the connected component, and the planar tree $T_{\kappa}$ corresponding to any configuration $\kappa$ in the connected component indexed by $T$ is isomorphic to $T$. The analogous result holds for the labeled case.
	\end{proposition}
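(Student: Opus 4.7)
The proof splits into a forward direction (the tree class is an invariant of the connected component) and a backward direction (configurations sharing a tree can be connected by a path). I will handle the labeled case first and then descend to the unlabeled case.

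For invariance, let $\kappa:[0,1]\to\Conf_n(S^1,\RR^2)$ be a path, written as $\kappa(t)=((x_i(t),y_i(t),r_i(t)))_{i=1}^n$. For each ordered pair $i\neq j$ the quantities
\[
A_{ij}(t)=(r_i-r_j)^2-((x_i-x_j)^2+(y_i-y_j)^2), \qquad B_{ij}(t)=(r_i+r_j)^2-((x_i-x_j)^2+(y_i-y_j)^2)
\]
are continuous in $t$ and, by the defining inequalities of $\Conf_n(S^1,\RR^2)$, never vanish; at every $t$ exactly one of ``$A_{ij}>0$'' or ``$B_{ij}<0$'' holds. Since the signs are locally constant, the relation ``$C_i$ and $C_j$ are nested'' versus ``disjointly unnested'' is constant along the path. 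When the pair is nested, the nester is the circle of larger radius; and $A_{ij}(t)>0$ forces $r_i(t)\neq r_j(t)$ (else $A_{ij}(t)\leq 0$), so the identity of the nester is also preserved. Hence $T_{\kappa(0)}$ and $T_{\kappa(1)}$ agree as labeled trees.

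For the backward direction I would induct on $n$ (or on the depth of $T$) to show that any labeled configuration $\kappa$ whose tree equals $T$ is path-connected to a fixed canonical configuration $\kappa_T$. Let $v_1,\dots,v_m$ be the children of the root of $T$, and $C_1,\dots,C_m$ the top-level circles of $\kappa$. First, apply an affine homotopy that simultaneously shrinks each $C_i$, together with every circle nested inside it, by a common factor toward the center of $C_i$; this preserves all pairwise disjointness and all nesting data because sub-configurations scale rigidly. Once the top-level circles are small enough, we can freely move their centers via a path in $\Conf_m(*,\RR^2)$ to the corresponding (shrunk) centers of $\kappa_T$; translating each $C_i$ carries its entire nested sub-configuration with it, and smallness ensures the top-level circles stay disjoint throughout. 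After this, each top-level disk contains a sub-configuration whose tree is $T(v_i)$; by the inductive hypothesis applied inside each disk (conjugating by an affine identification with $\RR^2$), we can further deform each sub-configuration to match $\kappa_T$. Finally, a reverse growth step brings the top-level circles to their canonical sizes. For the unlabeled case, a path between configurations whose unlabeled trees are isomorphic is produced by choosing compatible labelings on both endpoints (using the tree isomorphism) and applying the labeled case, then projecting to $\UConf_n(S^1,\RR^2)$. Existence of a canonical configuration for each planar tree is a recursive packing argument: place $m$ disjoint disks in $\RR^2$ using the planar ordering, then recursively place each subtree $T(v_i)$ inside the $i$th disk.

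The main obstacle is making the ``shrink, move, regrow'' procedure rigorous and continuous, in particular verifying that the intermediate configurations remain in $\Conf_n(S^1,\RR^2)$ at all times. The delicate point is choosing the shrinking factor small enough, uniformly in the endpoint configurations, so that when top-level circles are translated the interiors of distinct top-level disks stay disjoint; this is a compactness argument once one works inside a large ambient disk containing both endpoint configurations.
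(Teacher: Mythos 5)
This proposition is quoted from \cite{curry2024configurationspacescirclesplane} and the present paper gives no proof of it, so there is no in-paper argument to compare against; I can only assess your reconstruction on its own terms. Your two halves are the natural ones and both are essentially sound. The forward direction is clean: $B_{ij}\geq A_{ij}$ always, so the two defining conditions are mutually exclusive open conditions whose union covers $[0,1]$, forcing one to hold throughout; and $A_{ij}>0$ forces $r_i\neq r_j$, so the identity of the nesting circle is locally constant as well. Since the planar tree is determined (up to the planar ordering, which isomorphism ignores) by the nesting relation, the tree class is an invariant of the component. For the unlabeled forward direction you should say one more sentence: lift a path in $\UConf_n(S^1,\RR^2)$ to $\Conf_n(S^1,\RR^2)$ via the covering map, apply the labeled invariance, and forget labels to get an isomorphism of unlabeled trees.

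The one step I would push back on is the phrase ``conjugating by an affine identification with $\RR^2$'' in the recursive step: an open disk is not affinely identified with $\RR^2$, and the path supplied by the inductive hypothesis between two sub-configurations lying in $D(C_i)$ may well leave $D(C_i)$, which would break disjointness from the other top-level circles. The standard repair is to note that the inductive path has compact image, hence lies in some large ball; conjugate the whole path by a contraction centered at a point of $D(C_i)$ so that it fits inside $D(C_i)$, and then splice in scaling paths (toward that center) joining the true endpoints to their contracted copies --- these stay inside $D(C_i)$ by convexity of the disk. With that fix, and the compactness argument you already sketch for choosing the shrinking factor in the ``shrink, move, regrow'' step, the backward direction goes through.
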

	
	More simply put, the constructions induce a one-to-one correspondence between the isomorphism classes of planar trees and connected components of the configuration space of circles in the plane.
	Hence, the following is well defined:
	
	\begin{definition}[Space of $T$-configurations]\label{def:Tconfigs}
		For $T$ a planar labeled tree with $n$ non-root vertices, the \emph{space of labeled $T$-configurations} of $n$ circles in the plane, denoted $\Conf_T(S^1,\RR^2)$, is the connected component of $\Conf_n(S^1,\RR^2)$ containing the configurations corresponding to $T$. Similarly, for $T$ a planar (unlabeled) tree with $n$ non-root vertices, the \emph{space of $T$-configurations} of $n$ circles in the plane, denoted $\UConf_T(S^1,\RR^2)$, is the connected component of $\UConf_n(S^1,\RR^2)$ containing the configurations corresponding to $T$.
	\end{definition}
	
	Let $\kappa=\{C_1,\dots,C_n\}$ be a configuration of circles in the plane and identify $\kappa$ with its image in $\UJ_n(\RR^2)$. In particular, we can associate to any circle, $C_i$ two, possibly equal subsets of the plane, a simply connected domain bounded by $C_i$, denoted $D(C_i)$, and the connected component of $\RR^2\setminus\kappa$ that non-trivially intersects $D(C_i)$ and has $C_i$ as a boundary component, denoted $R(C_i)$.
	
	\begin{definition}[The abstract tree of configurations]\label{dfn:abstracttree}
		Let $\kappa=(C_1,\dots,C_n)$ be an element of $\Conf_n(S^1,\RR^2)$. Define the \emph{abstract labeled tree} of $\kappa$, we denote this $T(\kappa)$, to have a vertex $i$ for each $i\in\{1,\dots,n\}$ and a directed edge from vertex $i$ to vertex $j$ for $i\neq j$, labeled $(i,j)$, if and only if $C_j$ is a boundary component of $R(C_i)$; additionally, there is an unlabeled vertex, $\varnothing$, and an edge $(\varnothing,i)$ for each $C_i$ that does not lies in $D(C_j)$ for all $j\neq i$.
		
		Let $\eta:=\{C_1,\dots,C_n\}$ be an unlabeled configuration of $n$ circles in the plane. We may give this configuration an arbitrary labeling to yield $\widetilde{\eta}=(C_{i_1},\dots,C_{i_n})$ an element of $\Conf_n(S^1,\RR^2)$. Construct $T(\widetilde{\eta})$ and forget the labeling of the vertices to obtain $T(\eta)$, which we call the \emph{abstract unlabeled tree} of $\eta$.
		
		Two abstract labeled trees $T(\kappa)$ and $T(\kappa')$ are equal if and only if $T(\kappa)$ and $T(\kappa')$ have the same vertex and edge sets. Two abstract trees $T(\eta)$ and $T(\eta')$ are equal if and only if there exists an arbitrary labeling of $\eta$ and $\eta'$ such that $T(\widetilde{\eta})$ and $T(\widetilde{\eta}')$ are equal.
	\end{definition}
	
	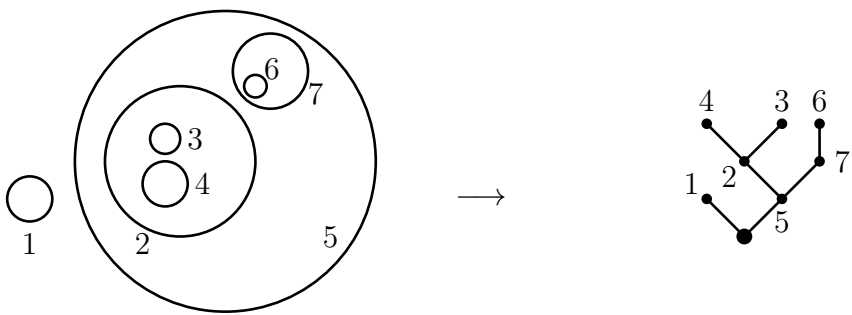
\begin{figure}[htb]
		\centering
		\begin{tikzpicture}[line width=1pt]
			
			\draw (-2,-0.5) circle (0.3cm);
			\draw (0,0) circle (1cm);
			\draw (-0.2,0.3) circle (0.2cm);
			\draw (-0.2,-0.3) circle (0.3cm);
			\draw (0.6,0) circle (2cm);
			\draw (1,1) circle (0.15cm);
			\draw (1.2,1.2) circle (0.5cm);
			
			\node at (-2,-1.1) {$1$};
			\node at (-0.5,-1.1) {$2$};
			\node at (0.2,0.3) {$3$};
			\node at (0.3,-0.3) {$4$};
			\node at (2,-1) {$5$};
			\node at (1.22,1.23) {$6$};
			\node at (1.8,0.9) {$7$};
			
			\node at (4,-0.5) {$\longrightarrow$};
			
			\begin{scope}[xshift=7cm,yshift=-0.5cm]
				\draw (0,0) -- (0.5,-0.5) -- (1.5,0.5) -- (1.5,1)   (1,0) -- (0,1)   (0.5,0.5) -- (1,1);
				
				\filldraw (0,0) circle (1.5pt);
				\filldraw (0.5,-0.5) circle (2.5pt);
				\filldraw (1.5,0.5) circle (1.5pt);
				\filldraw (1,0) circle (1.5pt);
				\filldraw (0,1) circle (1.5pt);
				\filldraw (0.5,0.5) circle (1.5pt);
				\filldraw (1,1) circle (1.5pt);
				\filldraw (1.5,1) circle (1.5pt);
				
				\node at (-0.2,0.2) {$1$};
				\node at (0.3,0.3) {$2$};
				\node at (1,1.3) {$3$};
				\node at (0,1.3) {$4$};
				\node at (1,-0.3) {$5$};
				\node at (1.5,1.3) {$6$};
				\node at (1.8,0.5) {$7$};
			\end{scope}
			
		\end{tikzpicture}
		\caption{A labeled configuration $\kappa$ of circles in $\Conf_7(S^1,\RR^2)$, and the associated abstract labeled tree $T(\kappa)$. (Reprinted from \cite{curry2024configurationspacescirclesplane})}
		\label{fig:config_tree}
	\end{figure}
	
	\begin{lemma}\label{lem:abstracttreeandtreerelate}
		The abstract labeled trees of $\Conf_n(S^1,\RR^2)$ index the isomorphism classes of planar labeled trees of $\Conf_n(S^1,\RR^2)$. Similarly, the abstract trees of $\UConf_n(S^1,\RR^2)$ index the isomorphism classes of planar trees of $\UConf_n(S^1,\RR^2)$.
	\end{lemma}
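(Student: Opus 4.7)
The plan is to observe that both $T_\kappa$ and $T(\kappa)$ encode the same underlying directed, labeled graph structure---namely the nesting relation of $\kappa$---and that $T_\kappa$ differs from $T(\kappa)$ only by the additional datum of a left-to-right planar ordering of the children at each vertex. The lemma should then follow from a direct comparison of definitions.

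First, for a fixed configuration $\kappa \in \Conf_n(S^1,\RR^2)$ I would verify that $T_\kappa$ and $T(\kappa)$ have the same vertex set (namely $\{1,\ldots,n\}$ together with a root) and the same directed edge set: in both constructions there is an edge from $i$ to $j$ precisely when $C_j$ is immediately nested in $C_i$, and an edge from the root to $i$ precisely when $C_i$ is not nested in any other circle. For the abstract tree one uses that the boundary components of $R(C_i)$ other than $C_i$ itself are exactly the circles immediately nested in $C_i$. Hence $T(\kappa)$ is obtained from $T_\kappa$ by the forgetful operation that drops the planar order of children.

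Next, I would invoke the paper's convention that an isomorphism of planar labeled trees is a vertex-and-edge-set bijection preserving edge directions and vertex labels, but not necessarily planar order. Any such bijection must be the identity on $\{1,\ldots,n\}$ and send the root to the root, so two planar labeled trees are isomorphic exactly when they have identical vertex and edge sets; by Definition~\ref{dfn:abstracttree} this is exactly the condition that their associated abstract labeled trees coincide. Therefore the forgetful map $T_\kappa \mapsto T(\kappa)$ descends to a well-defined bijection from isomorphism classes of planar labeled trees of $\Conf_n(S^1,\RR^2)$ onto abstract labeled trees of $\Conf_n(S^1,\RR^2)$. Surjectivity is automatic because each $T(\kappa)$ is the image of $T_\kappa$; injectivity is the equivalence just stated.

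For the unlabeled statement, I would choose an arbitrary labeling of $\eta \in \UConf_n(S^1,\RR^2)$ and reduce immediately to the labeled case, using that both ``isomorphism of planar unlabeled trees'' and ``equality of abstract unlabeled trees'' are defined as ``there exists a labeling making the associated labeled versions agree.'' The main obstacle is purely bookkeeping: keeping track of the four notions in play---labeled vs.\ unlabeled, planar vs.\ abstract, and isomorphism vs.\ equality---so that the correct definition is invoked at each step. There is no serious geometric content beyond the observation that the planar tree and the abstract tree of a configuration differ only by a choice of planar ordering.
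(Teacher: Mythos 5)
Your proposal is correct and follows essentially the same route as the paper: identify $T(\kappa)$ with the planar labeled tree $T_\kappa$ after forgetting the planar order (by matching edge sets via the equivalence between immediate nesting and being a boundary component of $R(C_i)$), note that isomorphism of planar labeled trees amounts to equality of the underlying non-planar labeled trees, and reduce the unlabeled case to the labeled one by an arbitrary labeling. No gaps.
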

	\begin{proof}
		Let $\kappa=(C_1,\dots,C_n)$ be an element of $\Conf_n(S^1,\RR^2)$. Let $\overline{T}_{\kappa}$ forget the planar ordering of the children of vertices of the planar labeled tree of $\kappa$, $T_{\kappa}$. 
		Note that if two planar trees are isomorphic then forgetting their planar ordering yields equal trees.
		We will show $\overline{T}_{\kappa}=T(\kappa)$. Since both $\overline{T}_{\kappa}$ and $T(\kappa)$ have $n$ non-root vertices labeled $1$ through $n$, we only need to show that they have the same directed edge set. 
		This is true because $C_j$ is nested inside $C_i$ if and only if $C_j$ lies in $D(C_i)$, and the non-existence of a $C_k$ that is simultaneously nested by $C_i$ with $C_j$ nested by $C_k$ is equivalent to $C_j$ being a component of the boundary of $R(C_i)$.
		Similarly, $C_i$ is not nested inside any other $C_k$ if and only if $C_i$ lies in the boundary of the region of the plane that is not bounded by any circle.
		
		Let $\eta=\{C_1,\dots,C_n\}$ be an element of $\UConf_n(S^1,\RR^2)$. Giving $\eta$ an arbitrary labeling, $\widetilde{\eta}=(C_{i_1},\dots,C_{i_n})$, corresponds to a labeling of $T_{\eta}$, denote it $\widetilde{T}_{\eta}$, such that $\widetilde{T}_{\eta}$ is the labeled planar tree of $\widetilde{\eta}$. By the previous argument, $\overline{T}_{\widetilde{\eta}}=T(\widetilde{\eta})$ and forgetting their labeling yields $T_{\eta}=T(\eta)$. Hence, the latter statement of the lemma is true.
	\end{proof}
	
	The deformation retracts in this paper depend on the containment order of the Jordan configurations. Just as the containment order of configuration spaces of circles in the plane were witnessed by trees, the same is true for Jordan configurations. 
	
	\begin{definition}[The abstract tree of Jordan configurations]\label{dfn:abstracttreeJrdancurves}
		Let $f=([f_1],\dots,[f_n])$ be a labeled Jordan configuration in $X$. Define the \emph{abstract labeled tree} of $f$, called $T(f)$, to have a vertex $i$ for each $f_i$ and a directed edge from vertex $i$ to vertex $j$, written as $(i,j)$, if and only if $f_j(S^1)$ is a component of the boundary of $R(f_i)$; additionally, there is an unlabeled vertex, $\varnothing$, for the region of $X\setminus f$ that is not $D(f_i)$ for some $i$ and an edge $(\varnothing,i)$ for each $i$ such that the image of $f_i(S^1)$ lies in the boundary of this region.
		Since the abstract tree depends on the image of a Jordan configuration and not on the parametrization of the curve, it is well defined.
		
		For $g:=\{[g_1],\dots,[g_n]\}$ an element of $\UJ_n(X)$, we may give this Jordan configuration an arbitrary labeling to yield $\widetilde{g}=([g_{i_1}],\dots,[g_{i_n}])$, an element of $\J_n(X)$. Construct $T(\widetilde{g})$ and forget the labeling of the vertices to obtain $T(g)$, which we call the \emph{abstract unlabeled tree} of $g$. 
		
		Two abstract labeled trees of Jordan configurations, $T(f)$ and $T(f')$, are equal if and only if $T(f)$ and $T(f')$ have equal vertex and edge sets. Two abstract unlabeled trees of Jordan configurations, $T(g)$ and $T(g')$, are equal if and only if there exists an arbitrary labeling of $g$ and $g'$ such that $T(\widetilde{g})$ and $T(\widetilde{g}')$ are equal. Two arbitrary labelings of a Jordan configuration result in two abstract labeled trees that differ by a permutation of the labels, hence the structure of the abstract unlabeled tree does not depend on the arbitrary labeling.
	\end{definition}
	
	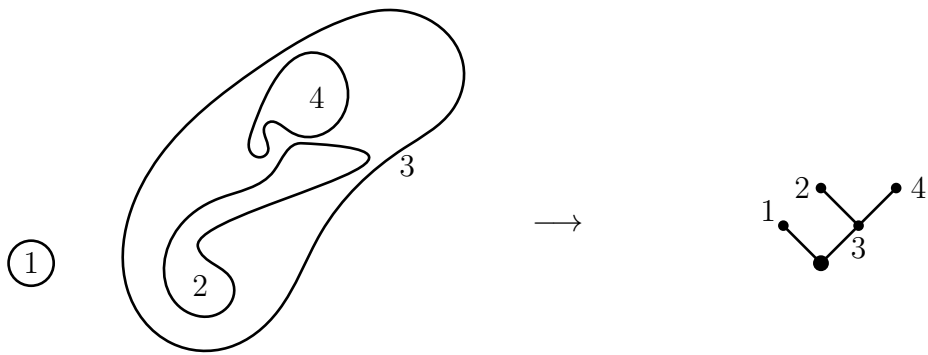
\begin{figure}[htb]
		\centering
		\begin{tikzpicture}[line width=1pt]
			
			\draw (-3,-1) circle (0.3cm);
			
			\draw[use Hobby shortcut,closed=true, scale=1.3, rotate=55]
			(-1.8,-.2) .. (-1.6,.5) .. (-1,.9) .. 
			(-.3,1) .. (1,.7) .. (2,.2) .. (2.22,0) ..
			(2.42,-.45) .. (2.25,-1) ..
			(1.8,-1.18) .. (1.2,-1) ..
			(0,-.8) ..
			(-.9,-.9) .. (-1.5,-.7);
			\draw[use Hobby shortcut,closed=true, scale=1.3, rotate=45,xshift=-10]
			(.6,0) .. (0,.2) .. (-1,-.5) .. (-.5,-.5) .. (-.5,0) .. (1,0);
			\draw[use Hobby shortcut,closed=true, scale=1, rotate=180, yshift=-50, xshift=-55]
			(1.8,0.9) .. (1.8,1.3) .. (2,1.3) .. (2,1) .. (1,0) .. (.8,.2) .. (1.5,1);
			
			\node at (-3,-1) {$1$};
			\node at (-0.75,-1.3) {$2$};
			\node at (2,.3) {$3$};
			\node at (.8,1.2) {$4$};
			
			\node at (4,-0.5) {$\longrightarrow$};
			
			\begin{scope}[xshift=7cm,yshift=-0.5cm]
				\draw (0,0) -- (0.5,-0.5) -- (1.5,0.5)   (1,0) -- (0.5,0.5);
				
				\filldraw (0,0) circle (1.5pt);
				\filldraw (0.5,-0.5) circle (2.5pt);
				\filldraw (1.5,0.5) circle (1.5pt);
				\filldraw (1,0) circle (1.5pt);
				\filldraw (0.5,0.5) circle (1.5pt);
				
				\node at (-0.2,0.2) {$1$};
				\node at (0.25,0.5) {$2$};
				\node at (1,-0.3) {$3$};
				\node at (1.8,0.5) {$4$};
			\end{scope}
			
		\end{tikzpicture}
		\caption{A labeled Jordan configuration $f$ in $\J_4(\RR^2)$, and the associated abstract labeled tree $T(f)$.}
		\label{fig:multicurv_tree}
	\end{figure}
	
	\begin{lemma}
	The maps sending abstract trees of configurations to abstract trees of Jordan configurations induced by the embeddings $\Conf_n(S^1,\RR^2)\to \J_n(\RR^2)$ and $\UConf_n(S^1,\RR^2)\to \J_n(\RR^2)$ are the identity.
	\end{lemma}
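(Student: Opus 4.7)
The plan is to verify directly that the two definitions of ``abstract tree'' agree on the nose for configurations in the image of $\Psi$. Given $\kappa=(C_1,\dots,C_n)\in\Conf_n(S^1,\RR^2)$ with $C_i=(x_i,y_i,r_i)$, write $\Psi(\kappa)=([f_1],\dots,[f_n])\in\J_n(\RR^2)$ with $f_i(t)=(r_i\cos t+x_i,\,r_i\sin t+y_i)$. Both $T(\kappa)$ and $T(\Psi(\kappa))$ have vertex set $\{\varnothing,1,\dots,n\}$ by construction, so the claim reduces to showing the two edge sets coincide. Since the edges of each tree are determined by the region $R(\cdot)$ associated with each curve, it suffices to match up these regions under $\Psi$.

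The first step is to identify the two notions of bounded domain. By definition $f_i$ parametrizes the round circle of radius $r_i$ centered at $(x_i,y_i)$, and the closed disk bounded by this circle is a Jordan domain in $\mathcal{D}(\RR^2)$; by uniqueness of the bounded complementary region (Jordan curve theorem) this disk is exactly $D(f_i)$, which coincides with the disk $D(C_i)$ used implicitly in Definition~\ref{dfn:abstracttree}. In particular, the nesting relation $C_j\subset D(C_i)$ matches $f_j(S^1)\subset D(f_i)$.

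Next observe that the images of $\kappa$ and $\Psi(\kappa)$ are literally the same subset of $\RR^2$, namely the disjoint union of the $n$ round circles, so the two complements $\RR^2\setminus\kappa$ and $\RR^2\setminus\Psi(\kappa)$ agree as topological spaces and have identical connected components. Hence for each $i$ the region $R(C_i)$ from Definition~\ref{dfn:abstracttree}, uniquely characterized as the connected component of the complement that non-trivially meets $D(C_i)$ and has $C_i$ as a boundary component, coincides with $R(f_i)$ from Definition~\ref{dfn:abstracttreeJrdancurves}. The same reasoning applies to the unbounded complementary region used to determine edges from the root $\varnothing$.

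Combining these identifications, an edge $(i,j)$ belongs to $T(\kappa)$ iff $C_j$ is a boundary component of $R(C_i)$, iff $f_j(S^1)$ is a boundary component of $R(f_i)$, iff $(i,j)$ belongs to $T(\Psi(\kappa))$; similarly for edges $(\varnothing,i)$. Thus $T(\kappa)=T(\Psi(\kappa))$ as abstract labeled trees, proving the labeled case. For the unlabeled case, given $\eta\in\UConf_n(S^1,\RR^2)$ one chooses any arbitrary labeling $\widetilde\eta$, applies the labeled result to obtain $T(\widetilde\eta)=T(\Psi(\widetilde\eta))$, and forgets labels; well-definedness was already handled in Definitions~\ref{dfn:abstracttree} and~\ref{dfn:abstracttreeJrdancurves}. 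No real obstacle arises, as the argument is essentially bookkeeping; the only point requiring care is confirming that the two definitions of $R(\cdot)$ agree, which is immediate because the two configurations are identical as subsets of $\RR^2$.
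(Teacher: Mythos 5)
Your proposal is correct and follows the same route as the paper, which simply observes that Definitions~\ref{dfn:abstracttree} and~\ref{dfn:abstracttreeJrdancurves} agree under the embedding of Theorem~\ref{thm:embeddingconfspaces}; you have merely spelled out the details (identical images in $\RR^2$, hence identical regions $D(\cdot)$ and $R(\cdot)$, hence identical edge sets). The extra bookkeeping is sound and adds nothing beyond what the paper's one-line proof asserts.
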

	\begin{proof}
		This is true due to Definitions~\ref{dfn:abstracttree} and~\ref{dfn:abstracttreeJrdancurves} agreeing under the embedding in Theorem~\ref{thm:embeddingconfspaces}.
	\end{proof}
	
	\begin{lemma}\label{lem:pathconnembsgivethesameabstracttree}
		If there exists a path from a labeled or unlabeled Jordan configuration $f$ to another $f'$, then the corresponding abstract labeled or unlabeled trees $T(f)$ and $T(f')$ are equal. 
	\end{lemma}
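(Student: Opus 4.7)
The plan is to show that $f\mapsto T(f)$, viewed as a map from $\J_n(X)$ (resp.\ $\UJ_n(X)$) to the discrete set of abstract labeled (resp.\ unlabeled) trees with $n$ non-root vertices, is locally constant; since $[0,1]$ is connected, any such function is constant along a path, yielding $T(f)=T(f')$. I would handle the labeled case directly and deduce the unlabeled case by lifting: because $\Sigma_n$ acts freely on $\J_n(X)$ (distinct coordinates of a Jordan configuration have distinct images and hence distinct Jordan domains), the quotient $\J_n(X)\to\UJ_n(X)$ is a covering map, so any unlabeled path lifts to a labeled one, and the labeled conclusion descends after forgetting the labels.

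For the labeled case, fix a continuous path $\gamma\colon[0,1]\to\J_n(X)$ with $\gamma(t)=([f_1^t],\dots,[f_n^t])$. The tree $T(\gamma(t))$ is determined by the pairwise nesting data
\[
\chi_{ij}(t):=\begin{cases}1 & \text{if } f_j^t(S^1)\subseteq\interior(D(f_i^t)),\\ 0 & \text{otherwise,}\end{cases}
\]
since the edges of $T(\gamma(t))$ encode the \emph{minimal} nestings, and minimality is determined by the full set of nestings. So it suffices to show each $\chi_{ij}\colon[0,1]\to\{0,1\}$ is locally constant.

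To do this, fix $s_0\in S^1$ and let $p_j(t):=f_j^t(s_0)$, a continuous path in $X$. Because curves in a Jordan configuration are pairwise disjoint, $p_j(t)\notin f_i^t(S^1)$ for every $t$, so $\Gamma:=\{(p_j(t),t):t\in[0,1]\}$ is a connected subset of $(X\times[0,1])\setminus K$, where $K:=\{(x,t):x\in f_i^t(S^1)\}$ is closed. Since $X$ is non-positively curved, $f_i^t$ bounds a unique disk $D(f_i^t)$, so $X\setminus f_i^t(S^1)$ splits into two distinguished components $\interior(D(f_i^t))$ and $X\setminus D(f_i^t)$. Continuity of $t\mapsto[f_i^t]$ together with uniqueness of $D(f_i^t)$ should ensure that these components vary continuously in $t$, so the connected components of $(X\times[0,1])\setminus K$ are ``tubes'' over $[0,1]$ that consistently track either the interior or the exterior of $D(f_i^t)$; the connected $\Gamma$ lies in a single tube, so $\chi_{ij}$ is constant.

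The main obstacle will be rigorously justifying the tube picture: formally, one must show that the assignment $[f]\mapsto D(f)$ from $\J_1(X)$ to $\mathcal{D}(X)$ is continuous, so the inside/outside labeling of the two components of $X\setminus f_i^t(S^1)$ propagates coherently along the path. This will invoke non-positive curvature of $X$ (for uniqueness of the bounding disk) and sequentiality of $\J_1(X)$ from Lemma~\ref{lem:sequential}. Granting this, all pairwise nesting relations are preserved along $\gamma$, giving $T(f)=T(f')$ in the labeled case; the unlabeled case then follows by the lifting argument above.
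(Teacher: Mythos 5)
Your argument is essentially the paper's: the paper likewise observes that, since the configuration consists of pairwise disjoint curves at every moment of the path, no curve can cross another, so each nesting relation $D(f_j)\subseteq D(f_i)$ persists for all $t$ (which determines the edge set), and it handles the unlabeled case exactly as you do, by lifting the path to a labeled one. Your write-up is more explicit about the locally-constant/connectedness mechanism and about the continuity of $[f]\mapsto D(f)$ that underlies it, but the route is the same.
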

	\begin{proof}
		It is clear that the vertex sets are equal and we need to show that the edge sets are too. Let $p_t$ be a path from $f=([f_1],\dots,[f_n])$ to $f'=([f_1'],\dots,[f_n'])$. Suppose $D(f_j)$ lies in the disk $D(f_i)$. For all $t$, we have $n$ labeled Jordan configurations $p_t$. This means for each $t$, the $j^{th}$ component of $p_t$ must lie in the disk with boundary the $i^{th}$ component of $p_t$. Therefore, $D(f'_j)$ lies in the disk $D(f'_i)$ and the edge sets of $T(f)$ and $T(f')$ are equal.
		
		Lifting a path between two elements $g$ and $g'$ in $\UJ_{n}(X)$ by giving them a labeling that respects the path yields the unlabeled case.
	\end{proof}
	
	
	\section{Rounding Jordan configurations}\label{sec:rounding}
	
	
	Here we round all Jordan configurations in our surface to be circles in the sense of curves equidistant to a point in $X$, Theorem~\ref{thrm:roundingcurvesinsurfaces}. This combined with Proposition~\ref{prop:components} allows us to index the connected components of $\UJ_n(X)$ and focus on the finite dimensional subspace $\RUJ_n(X)$. To round Jordan configurations in surfaces we lift to the universal cover of constant curvature and round each curve relative to the tangent plane of the surface at the center point of the curve, see Figure~\ref{fig:domain_surface_tangentplane}. Thus, we focus on rounding Jordan configurations in the plane, Theorem~\ref{thrm:preroundingJordanconfigurationsintheplane}. An added result due to the equivariant centers of Belegradek and Ghomi in Theorems~\ref{thrm:extequcenter}~and~\ref{thrm:extequcenterplane} is that these strong deformation retractions become equivariant. In fact, Belegradek and Ghomi prove Theorem~\ref{thrm:roundingcurvesinsurfaces} for the case $n=1$. They even prove that $\mathcal{D}(S^2)$ equivariantly strong deformation retracts onto the subspace of round Jordan domains.
	
	\begin{remark}
		It should be stated that the deformation retractions can be performed by curve shortening flow on rectifiable curves \cite{lauer2013new}.
	\end{remark}

	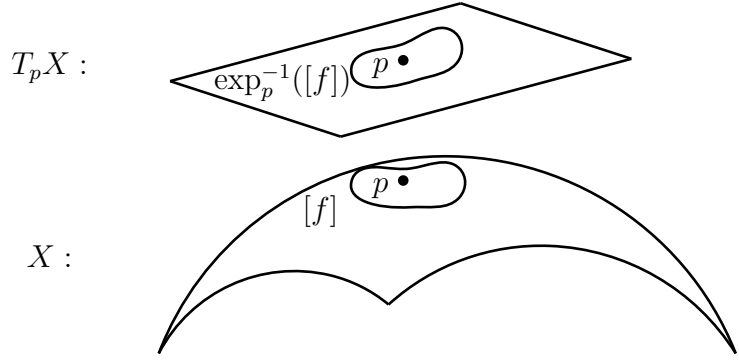
\begin{figure}[htb]
		\centering
		\begin{tikzpicture}[line width=1pt]
			
			\begin{scope}[rotate=15,yshift=45,xshift=10]
				\draw (-4,1.8) -- (-2,0.5);
				\draw (0,1.8) -- (2,0.5);
				\draw (-4,1.8) -- (0,1.8);
				\draw (-2,0.5) -- (2,0.5);
			\end{scope}
			
			\draw[use Hobby shortcut,closed=true, scale=1.29,xshift=2] (-1.6,.8) .. (-1.3,.91) .. (-1,.9) .. (-.5,.9) .. (-.5,.6) .. (-1,.5);
			
			\draw[use Hobby shortcut,closed=false, scale=1.3]
			(2.4,-1) .. (.5,0.1) .. (-1.15,-.5);
			
			\draw[use Hobby shortcut,closed=false, scale=1.3]
			(-1.15,-.5) .. (-1.75,-.2) .. (-3.5,-1);
			
			\draw[use Hobby shortcut,closed=false, scale=1.3]
			(2.4,-1) .. (-.9,1) .. (-3.5,-1);
			
			\filldraw (-1.3,1) circle (1.5pt);
			\node at (-1.6,.9) {$p$};
			
			\filldraw (-1.3,2.6) circle (1.5pt);
			\node at (-1.6,2.5) {$p$};

			\draw[use Hobby shortcut,closed=true, scale=1.29,xshift=6,yshift=45, rotate=15] (-1.6,.8) .. (-1.3,.91) .. (-1,.9) .. (-.5,.9) .. (-.5,.6) .. (-1,.5);
			
			\node at (-6,2.5) {$T_pX:$};
			\node at (-6,0) {$X:$};
			\node at (-2.9,2.3) {$\exp_{p}^{-1}([f])$};
			\node at (-2.4,.6) {$[f]$};
			
		\end{tikzpicture}
		\caption{The inverse exponential function in Definition~\ref{def:exp} of $X$ at $p$ lifts a curve $[f]$ in $\UJ_{n}(X)$ to the tangent plane of $X$ at $p$, where points/vectors in the tangent plane correspond to geodesics in $X$.}
		\label{fig:domain_surface_tangentplane}
	\end{figure}
	
	\subsection{Convex curves in the plane}\label{sec:convex}
	
	An intermediate step before proving Theorem~\ref{thrm:preroundingJordanconfigurationsintheplane} is to show that the space of convex Jordan configurations strong deformation retracts onto the space of round Jordan configurations.
	
	\begin{definition}[Convex Jordan configurations]\label{dfn:cnvxjrdncrves}
		We say a curve $[f]\in\J_1(\RR^2)$ is \emph{convex} if $D(f)$ is a convex region of the plane. Let $\CJ_n(\RR^2)$ denote the subspace of $\J_n(\RR^2)$ where the entries of the tuples are convex. Similarly, let $\CUJ_n(\RR^2)$ denote the subspace of $\UJ_n(\RR^2)$ where elements of the sets are convex.
	\end{definition}
	
	Convex Jordan configurations are a closed $\Con(\RR^2)$-invariant subspace and there are a number of known $\Con(\RR^2)$-equivariant centers defined on them including the centroid. Note that the only choice of equivariant center on $\RUJ_n(\RR^2)$ with respect to conformal transformations of the plane is the canonical choice of centers of circles.
	Centroids of convex domains are known to lie in their interior, be equivariant under conformal maps of the plane, and the map that computes the centroid of domains is known to be continuous \cite{bonnesen1987theory,moszynska2006selected}.
	There are other choices for $\Con(\RR^2)$-equivariant centers on convex curves, for example the Steiner point \cite{schneider2013convex,moszynska2006selected}.
	
	\begin{theorem}\label{thrm:convdefretractontojrdncurves}
		The spaces $\CJ_n(\RR^2)$ and $\CUJ_n(\RR^2)$ strong deformation retract onto $\RJ_n(\RR^2)$ and $\RUJ_n(\RR^2)$, respectively, in a way that is equivariant under $\Con(\RR^2)$. 
	\end{theorem}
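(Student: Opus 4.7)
I would carry out the retraction coordinate by coordinate, equivariantly interpolating each convex Jordan curve to a concentric round circle via a Minkowski combination. Two pieces of $\Con(\RR^2)$-equivariant continuous data are needed: a center $c:\CJ_1(\RR^2)\to\RR^2$ with $c([f])\in\interior D(f)$, supplied by the centroid (which the paragraph preceding the statement already notes is $\Con(\RR^2)$-equivariant and continuous on convex domains), and a positive target radius function $R$ on $\CJ_n(\RR^2)$ whose $i^{\text{th}}$ component $R_i$ returns the original radius whenever $[f_i]$ is already a round circle, so that $\RJ_n(\RR^2)$ is fixed throughout the deformation.

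\textbf{The deformation.} Writing $c_i:=c([f_i])$, let
\[
D_{i,t} := (1-t)\,D(f_i) + t\,\overline{B}_{R_i}(c_i)
\]
be the Minkowski combination of the original convex domain with the closed disk of radius $R_i$ around $c_i$, and set $H_t([f_i]):=\partial D_{i,t}$. Minkowski combinations of convex bodies are convex; by construction $D_{i,0}=D(f_i)$, $D_{i,1}=\overline{B}_{R_i}(c_i)$, and $D_{i,t}$ is independent of $t$ whenever $D(f_i)$ already equals $\overline{B}_{R_i}(c_i)$. Continuity of $H$ follows from continuity of $c$, $R$, and Minkowski addition in the Hausdorff metric on convex bodies; $\Con(\RR^2)$-equivariance follows from the equivariance of $c$ and $R$ together with the commutation of similarities with Minkowski sums. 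Because the construction is coordinate-wise with no permutation of the tuple, the homotopy descends equivariantly to a deformation retraction $\CUJ_n(\RR^2)\to\RUJ_n(\RR^2)$.

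\textbf{Main obstacle.} The crux of the proof is showing that $H_t$ actually stays in $\CJ_n(\RR^2)$: the domains $D_{1,t},\dots,D_{n,t}$ must remain pairwise disjoint with the same nesting tree as $f$ for every $t$. Monotonicity of the Minkowski sum reduces preservation of a nesting $D(f_j)\subset D(f_i)$ to the disk-containment $\overline{B}_{R_j}(c_j)\subset\overline{B}_{R_i}(c_i)$, i.e.\ $|c_i-c_j|+R_j\leq R_i$, while preservation of disjointness between incomparable (``sibling'') curves reduces to $|c_i-c_j|>R_i+R_j$; these constraints pull $R$ in opposite directions, so $R$ must depend on the whole configuration. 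I would define it inductively along the nesting tree of $f$: starting from the leaves and moving outward, take $R_i$ to be a small fraction of the minimum among the pairwise distances between $c_i$, $\partial D(f_i)$, the sibling boundaries $\partial D(f_j)$, and the target disks $\overline{B}_{R_j}(c_j)$ of descendants $j$ already chosen. All these quantities are continuous and strictly positive on $\CJ_n(\RR^2)$ and scale linearly under similarities, so $R$ remains continuous and $\Con(\RR^2)$-equivariant. Verifying that the induction closes up and that the Minkowski interpolants remain simultaneously pairwise disjoint and correctly nested at every intermediate $t$—by combining separation of the contracted bodies $(1-t)D(f_i)+tc_i\subset D(f_i)$ with the upper bound $tR_i+tR_j$ on the Minkowski thickening—is the main technical step of the proof.
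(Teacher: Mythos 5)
Your overall architecture --- each convex curve interpolating \emph{independently}, via a Minkowski combination, from $D(f_i)$ to a target disk $\overline{B}_{R_i}(c_i)$ chosen continuously and equivariantly --- cannot be made to work, and the step you defer as ``the main technical step'' is exactly where it breaks. To keep siblings disjoint you correctly want $\overline{B}_{R_i}(c_i)\subseteq D(f_i)$, which by convexity gives $D_{i,t}\subseteq(1-t)D(f_i)+tD(f_i)=D(f_i)$ for all $t$; and to preserve a nesting $D(f_j)\subsetneq D(f_i)$ under independent Minkowski interpolation you need, as you note, $\overline{B}_{R_j}(c_j)\subseteq\overline{B}_{R_i}(c_i)$, i.e.\ $|c_i-c_j|+R_j\leq R_i$. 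Together these force
$|c_i-c_j|+R_j\leq R_i\leq d(c_i,\partial D(f_i))$,
which is unsatisfiable for any $R_j>0$ whenever $|c_i-c_j|\geq d(c_i,\partial D(f_i))$ --- for instance a long thin ellipse $D(f_i)$ with centroid $c_i$ and a small nested curve sitting near one end. No disk contained in a thin ellipse can contain two small disks placed near its two ends, so no choice of centers or radii rescues the scheme. Your inductive recipe actually points the wrong way: taking $R_i$ to be a small fraction of the distance from $c_i$ to the descendants' target disks makes $\overline{B}_{R_i}(c_i)$ \emph{disjoint} from $\overline{B}_{R_j}(c_j)$ rather than containing it, so the outer boundary must sweep across the inner curve at some intermediate $t$. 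Separately, wherever $R_i$ is ``a small fraction'' of anything, a round circle gets replaced by a strictly smaller one, so the homotopy is not stationary on $\RJ_n(\RR^2)$ and the retraction is not strong; fixing this requires $R_i=d(c_i,\partial D(f_i))$ on round curves, which only sharpens the conflict above.

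The missing idea is that the motion of a nested curve must be \emph{coupled} to the motion of its ancestors. The paper's proof stages the homotopy by nesting depth: at each stage a curve $f_i$ is radially contracted onto its inscribed circle of radius $r_{f_i}$ about $c_{f_i}$, while the entire configuration nested inside it is simultaneously translated and rescaled about $c_{f_i}$ by the factor $r_{f_i}/R_{f_i}$ so that it rides along inside the shrinking curve; disjointness holds because each boundary point of $f_i$ has its distance to $c_{f_i}$ multiplied by at least $r_{f_i}/R_{f_i}$ while the inner configuration is scaled by exactly that factor. Some such dependence of the inner curves' paths on the outer curves' paths is unavoidable, and your proposal has none; if you want to keep the Minkowski-interpolation formula for the outermost curves, you would still need to transport their nested configurations along by a family of (equivariantly chosen) affine maps, at which point you have essentially reconstructed the paper's argument.
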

	\begin{proof}
		Let $f=\{[f_1],\dots,[f_n]\}$ be an element of $\CUJ_n(\RR^2)$. 
		Let $R_{f_i}$ be the maximum distance from $f_i(S^1)$ to the center of $[f_i]$ provided by Corollary~\ref{cor:existcenter}, written as , $c(D(f_i))=c_{f_i}$, and let $r_{f_i}$ be the minimum distance between $f_i(S^1)$ and $c_{f_i}$. 
		There is a strict partial ordering of $\{\varnothing,1,\dots,n\}$ given by $T(f)$; we write $j<i$ if $D(f_j)\subsetneq D(f_i)$. 
		Let $d_j$ be the number of $i\neq\varnothing$ such that $j<i$. 
		Take $d$ to be the maximum $d_j$ for $j\in \{1,\dots,n\}$.
		We will construct a piecewise linear homotopy.
		
		For $j\in \{1,\dots,n\}$, define
		$$F_{f_j,d_j+1}(t,x):=f_j(x)+t\left(\frac{c_{f_j}-f_j(x)}{|c_{f_j}-f_j(x)|}\right)(|c_{f_j}-f_j(x)|-r_{f_j}),$$
		which linearly contracts the image of $f_j$ to the circle centered at $c_{f_j}$ with radius $r_{f_j}$, so the image of $F_{f_j,d_j+1}$ is invariant under precomposition with a homeomorphism of a circle and passing to the equivalence class $[F_{f_j,d_j+1}(t,-)]$ is well defined. 
		For $1\leq k\leq d_j$, let $c_{f_j,k}$ be the center of the circle $F_{f_j,d_j+1-(k-1)}(1,S^1)$ and let $j_k\in\{1,\dots,n\}$ be the unique index such that there exists exactly $k-1$ distinct $l$ such that $j<l<j_k$,
		define $F_{f_j,d_j+1-k}(t,x)$ to be
		$$\left(c_{f_j,k}+t\left(\frac{c_{f_{j_k}}-c_{f_j,k}}{R_{f_{j_k}}}\right)(R_{f_{j_k}}-r_{f_{j_k}})\right)+\left(F_{f_j,d_j+1-(k-1)}(1,x)-c_{f_j,k}\right)\left(1-t\left(\frac{R_{f_{j_k}}-r_{f_{j_k}}}{R_{f_{j_k}}}\right)\right),$$
		and for $d_j-d\leq k\leq -1$ define $$F_{f_j,d_j+1-k}(t,x):=f_j(x).$$
		We have that $F_{f_j,k}$ is stationary for $k>d_j+1$ and linearly translates and scales the circle $F_{f_j,k+1}(1,S^1)$ for $k<d_j+1$, so $F_{f_j,k}(t,x)$ is invariant under precomposition with a homeomorphism of a circle and passing to the equivalence class $[F_{f_j,k}(t,-)]$ is well defined.
		
		We will show $\{[F_{f_1}(t,-)],\dots,[F_{f_n}(t,-)]\}$ is a path of Jordan configurations from $\{[f_1],\dots,[f_n]\}$ to a round Jordan configuration defined by concatenation
		$$F_{f_j}(t,-):=(F_{f_j,d+1}* F_{f_j,d}*\cdots *F_{f_j,d_j}*\cdots* F_{f_j,2}* F_{f_j,1})(t,-)$$
		for all $j$. Note that when $\{[f_1],\dots,[f_n]\}$ is an element of $\RUJ_n(\RR^2)$ we have $|c_{f_j}-f_j(x)|-r_{f_j}=0$, and $r_{f_j}=R_{f_j}$ for all $j$, so $\{[F_{f_1}(t,-)],\dots,[F_{f_n}(t,-)]\}$ leaves a round Jordan configuration stationary. 
		The concatenation is well defined since each $F_{f_j,k}(t,-)$ starts at $F_{f_j,k+1}(1,-)$ and ends at $F_{f_j,k-1}(0,-)$. 
		Since $R_{f_j}$, $r_{f_j}$, and $c_{f_j}$ all change continuously as $f_j$ changes continuously, it is sufficient to show $\{[F_{f_1,k}(t,-)],\dots,[F_{f_n,k}(t,-)]\}$ is a path of Jordan configurations, that is, each pair of curves stays disjoint for all $k$ and $t$. 
		
		Suppose $\{[F_{f_1,d_i+1}(0,-)],\dots,[F_{f_n,d_i+1}(0,-)]\}$ is a Jordan configuration for some $i$, this is true for $d_i=d$ since $\{[F_{f_1,d+1}(0,-)],\dots,[F_{f_n,d+1}(0,-)]\}=\{[f_1],\dots,[f_n]\}$.
		We have $\{[F_{f_j,d_i+1}(t,-)]\mid j<i\}$ is a linear path of round Jordan configurations in the interior of $f_i$ that contracts the configuration $\{[F_{f_j,d_i+1}(0,-)]\mid j<i\}$, centered at $c_{f_i}$, by a factor of $\frac{r_{f_i}}{R_{f_i}}$. That is $\{[F_{f_j,d_i+1}(0,-)-c_{f_i}]\mid j<i\}$ and $\{[F_{f_j,d_i+1}(1,-)-c_{f_i}]\mid j<i\}$ are the same configuration shrunken by a factor of $\frac{r_{f_i}}{R_{f_i}}$, and therefore $\{[F_{f_j,d_i+1}(t,-)]\mid j<i\}$ is a configuration for all $t$. Note that $[F_{f_i,d_i+1}(0,-)]$ bounds and is disjoint from $\{[F_{f_j,d_i+1}(0,-)]\mid j<i\}$. Further, the distance of $F_{f_i,d_i+1}(t,x)$ from $c_{f_i}$ decreases by a factor of at most $\frac{r_{f_i}}{R_{f_i}}$ for all $x\in S^1$, whereas the distance of $F_{f_j,d_i+1}(t,x)$ from $c_{f_i}$ decreases by a factor of exactly $\frac{r_{f_i}}{R_{f_i}}$ for all $x\in S^1$ and $j<i$, so $[F_{f_i,d_i+1}]$ stays disjoint from $\{[F_{f_j,d_i+1}(t,-)]\mid j<i\}$. 
		
		The paths $[F_{f_i,d_i+1}(t,-)]$ and $\{[F_{f_j,d_i+1}(t,-)]\mid j<i\}$ are contained in $D(f_i)$, the same is said for all $l$ with $d_l=d_i$ we have $[F_{f_l,d_i+1}(t,-)]$ and $\{[F_{f_j,d_i+1}(t,-)]\mid j<l\}$ are contained in $D(f_l)$ for all $t$. If $d_i> d_l$, then $[F_{f_l,d_i+1}(t,-)]$ is stationary. It follows that $[F_{f_i,d_i+1}(t,-)]$ and $\{[F_{f_j,d_i+1}(t,-)]\mid j<i\}$ are disjoint from $[F_{f_l,d_i+1}(t,-)]$ and $\{[F_{f_j,d_i+1}(t,-)]\mid j<l\}$ for $l$ with $d_l=d_i$, and $[F_{f_i,d_i+1}(t,-)]$ and $\{[F_{f_j,d_i+1}(t,-)]\mid j<i\}$ are disjoint from $[F_{f_l,d_i+1}(t,-)]$ for $l$ with $d_i> d_l$. By iterating this procedure from $d_i=d$ to $d_i=1$, this shows $\{[F_{f_1}(t,-)],\dots,[F_{f_n}(t,-)]\}$ is a path of a convex Jordan configuration to a round Jordan configuration, constant if $\{[f_1],\dots,[f_n]\}$ was already a round Jordan configuration.
		
		Lastly, we note that this strong deformation retraction is $\Con(\RR^2)$-equivariant. Applying a conformal map to $f$ does not affect the nesting, so the abstract tree of $f$ stays the same after applying a conformal map,  and hence the partial ordering defined by the abstract tree does not change. Checking equivariance is similar to Belegradek and Ghomi's check in \cite[Prop.~4.1]{belegradek2025point} which follows from equivariance of our chosen centers. The difference in our deformation retraction is checking if the scaling is equivariant, however since distance ratios are preserved by conformal linear transformations and translations, $$\frac{r_{f_j}}{|c_{f_j}-f_j(x)|}=\frac{g(r_{f_j})}{|g(c_{f_j})-g(f_j(x))|}=\frac{r_{g(f_j)}}{|c_{g(f_j)}-g(f_j(x))|}$$ and $$\frac{r_{f_{j_k}}}{R_{f_{j_k}}}=\frac{g(r_{f_{j_k}})}{g(R_{f_{j_k}})}=\frac{r_{g(f_{j_k}})}{R_{g(f_{j_k})}}.$$
		
		This construction of a $\Con(\RR^2)$-equivariant strong deformation retraction does not depend on a labeling, so the theorem holds.
	\end{proof}
	
	See Figure~\ref{fig:convex_def} for an example of the above deformation retraction applied to a convex Jordan configuration of 3 curves in the plane.
	
		\begin{figure}[htb]
		\centering
		\begin{tikzpicture}[line width=1pt,scale=.8]
			
			\begin{scope}[xshift=-8.5cm]
			\draw (-4,0) ellipse (3cm and 1cm);
			\draw (-2,0) ellipse (.25cm and .5cm);
			\draw[dotted] (-2,0) circle (.25cm);
			\draw (-5,0) circle (.75cm);
			
			\draw[thick,fill] (-5,0) circle (.05cm);
			
			\draw[thick,fill] (-2,0) circle (.05cm);
			
			\node at (0.1,0) {$\longrightarrow$};
			\end{scope}
			
			\draw (-4,0) ellipse (3cm and 1cm);
			\draw[dotted] (-4,0) circle (1cm);
			\draw[dashed] (-4,0) circle (3cm);
			\draw (-2,0) ellipse (.25cm and .25cm);
			\draw (-5,0) circle (.75cm);
			
			\draw[thick,fill] (-4,0) circle (.05cm);
			\node at (-3.85,-.15) {$c$};
			\draw[dash dot] (-4,0) -- (-3.5,0.866025403784439);
			\node at (-3.5,.3) {$r$};
			\draw[dash dot dot] (-4,0) -- (-3.5,2.95803989154980802);
			\node at (-3.3,1.7) {$R$};
			
			\node at (0.1,0) {$\longrightarrow$};
			
			\draw (2,0) circle (1cm);
			\draw (2.66666667,0) circle (.0833333cm);
			\draw (1.66666667,0) circle (.25cm);

		\end{tikzpicture}
		\caption{An element of $\CUJ_3(\RR^2)$ and its deformation to a round Jordan configuration.
		First the nested curves contract to the largest circle contained in each curve centered at its center (dotted) and leaves circles stationary. 
		Then the deformation contracts the un-nested curve to the largest circle it contains centered at its center $c$ (dotted), and scales the configuration nested in the un-nested curve by a factor of $r/R$ relative to $c$, so that it becomes the same configuration in the dotted circle as it was in the smallest circle containing the un-nested curve centered at $c$ (dashed).}
		\label{fig:convex_def}
	\end{figure}
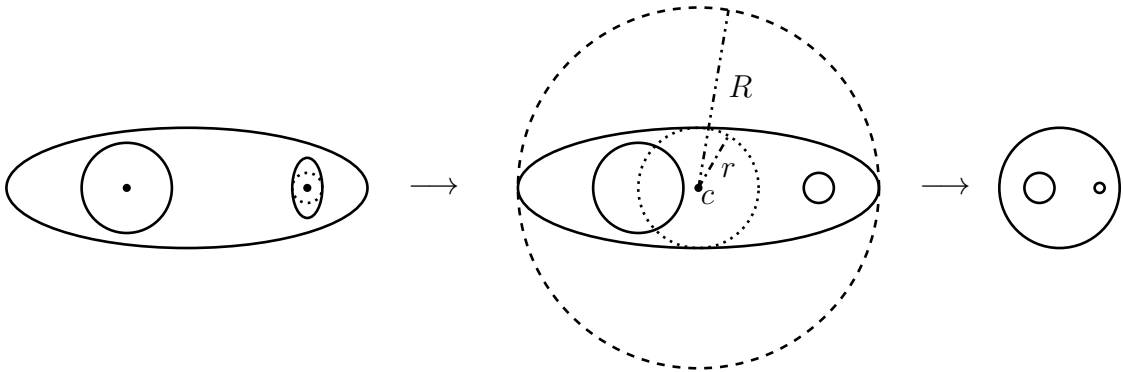
	
	The construction of the strong deformation retraction in the previous proof behaves well with respect to parametrizations of the curves so a similar statement can be said for the labeled and unlabeled embedding space of circles in the plane.

	
	\subsection{Jordan configurations in the plane}\label{sec:proof}
	
	In this subsection we prove Theorem~\ref{thrm:roundingcurvesinsurfaces} in the case of the plane which is a generalization of Belegradek and Ghomi's application of Theorem~\ref{thrm:extequcenterplane} \cite[Prop.~4.1]{belegradek2025point}. 
	Here we take their construction of a  $\Con(\RR^2)$-equivariant deformation retraction of the space of Jordan domains onto the space of round Jordan domains in the plane, and we update it to $\Con(\RR^2)$-equivariantly deformation retract the space of Jordan configurations in the plane onto the space of round Jordan configurations in the plane.
	
	Recall for every simple closed curve in the plane, $[f]\in \J_1(\RR^2)$, we have $D(f)\in\mathcal{D}(\RR^2)$ to denote the union of the interior of the image of $f$ and the image of $f$. By Corollary~\ref{cor:existcenter},	there is a continuous $\Con(\RR^2)$-equivariant choice of interior point of $D(f)$, written as $c(D(f))$, such that if $D(f)$ is round then $c(D(f))$ is the usual center of a circle. Also recall the strict partial ordering of the indices of Jordan configurations given by their abstract tree used in the proof of Theorem~\ref{thrm:convdefretractontojrdncurves}, as well as the associated numbers $d_j$, the number of $i\neq\varnothing$ such that $j<i$, and $d$, the maximal such $d_j$. 
	
	Let $f=\{[f_1],\dots,[f_n]\}$ be an element of $\UJ_n(\RR^2)$.  
	If $d=0$, then $f_i$ does not lie in $D(f_j)$ for all $i\neq j$, it follows from Lemma~\ref{lem:conformalmapping} that there is a continuous map from $\UJ_n(\RR^2)$ to $\UEmb_n(B^2,\RR^2)$ induced by sending each curve, $[f_i]$, to the unique map $\gamma_{D(f_i),c(D(f_i)),(1,0)}$. In particular, there is a continuous map sending \[f^{(0)}:=\{[f_i]\mid d_i=0\}\] to $\{\gamma_{D(f_i),c(D(f_i)),(1,0)}\mid d_i=0\}$ in $\UEmb_m(B^2,\RR^2)$, where $\gamma_{D(f_i),c(D(f_i)),(1,0)}$ is conformal on the interior of $B^2$ and $m=|\{i\mid d_i=0\}|$. Write $\gamma_{f_i}$ for $\gamma_{D(f_i),c(D(f_i)),(1,0)}$.
	
	The idea is to use the conformal parametrization of the interior of the un-nested curves, $f^{(0)}$, to move the nested curves while rounding the un-nested curves. To ensure that two un-nested curves are disjoint while rounding them, we shrink them so that the rounding happens in the domain of the original curve. For $y\in[0,1)$, let $\phi_y:[0,1]\to(B^2\to B^2)$ be the path in $\Emb(B^2,B^2)$ sending $s$ to the map $x\mapsto x-ysx$. Note that $\lim\limits_{y\to1}\phi_y(1)$ is the constant map at the center of $B^2$, denoted $o$; and note for $y<y'$ we have $\left(\phi_{y'}(s)\right)\left(B^2\right)\subsetneq\left(\phi_{y}(s)\right)\left(B^2\right)$. We define $h_{t,f_i,y}:B^2\to\RR^2$ to be the rounding deformation retraction of an un-nested curve, $f_i$, and its interior after $\phi_y$, that is
	$$h_{t,{f_i},y}(x):=\begin{cases}
		c(D(f_i))+\frac{\gamma_{f_i}\left(t(\phi_y(1))(x)\right)-c(D(f_i))}{t}&t\in(0,1]\\
		c(D(f_i))+(1-y)(d\gamma_{f_i})_o(x)&t=0
	\end{cases},$$ 
	which is continuous in $t$,
	\begin{align*}
		\lim\limits_{t\to0}h_{t,{f_i},y}=&\lim\limits_{t\to0}c(D(f_i))+\frac{\gamma_{f_i}\left(t(\phi_y(1))(x)\right)-c(D(f_i))}{t}\\
		=&c(D(f_i))+\lim\limits_{t\to0}\frac{\gamma_{f_i}\left(t(1-y)(x)\right)-c(D(f_i))}{t}\cdot\frac{(1-y)t}{(1-y)t}\\
		=&c(D(f_i))+\lim\limits_{t\to0}\frac{\gamma_{f_i}\left(t(1-y)(x)\right)-c(D(f_i))}{(1-y)t}\cdot\frac{(1-y)t}{t}\\
		=&c(D(f_i))+\lim\limits_{u\to0}\frac{\gamma_{f_i}\left(ux\right)-c(D(f_i))}{u}\cdot(1-y)&u\coloneqq t(1-y)\\
		=&c(D(f_i))+(d\gamma_{f_i})_o(x)\cdot(1-y).
	\end{align*}
	Calling this a rounding deformation of a Jordan configuration is a misnomer since linear maps, such as $h_{0,f_i,y}$, do not generally preserve circles, but they do send circles to ellipses. In particular, the set $h_{0,{f_i},y}(B^2)$ is convex since linear maps send convex sets to convex sets.
	
	Let $r_{f_i}$ be the radius of the circle inscribed in $D(f_i)$ with center $c(D(f_i))$, that is, $r_{f_i}$ is the infimum of $d_{\RR^2}(c(D(f_i)),f_i(x))$ for $x\in S^1$. Denote by $M_{{f_i},y}$ the supremum of 
	$$d_{\RR^2}\left(h_{t,{f_i},y}\left(x\right),c(D(f_i))\right),$$
	for $x\in B^2$ and define the map $\varphi_{{f_i}}:[0,1)\to(0,\infty)$ by $y\mapsto M_{{f_i},y}$.
	
	\begin{lemma}
		The value $\varphi_{{f_i}}^{-1}(r_{f_i})$ is a  continuous choice of $y\in[0,1)$ such that $h_{t,{f_i},y}(B^2)$ lies in $D(f_i)$ for all $t$. 
	\end{lemma}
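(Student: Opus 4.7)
The plan is to show that $\varphi_{f_i}:[0,1)\to(0,\infty)$ is continuous and strictly decreasing, with $\varphi_{f_i}(0)\geq r_{f_i}$ and $\lim_{y\to 1}\varphi_{f_i}(y)=0$, so that $y^\ast:=\varphi_{f_i}^{-1}(r_{f_i})$ is uniquely defined and depends continuously on $f_i$; the containment claim is then immediate from the definition of $r_{f_i}$.

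First, I would verify that $M_{f_i,y}$ is finite and that $\varphi_{f_i}$ is jointly continuous in $(f_i,y)$. Since $(t,x)\mapsto h_{t,f_i,y}(x)$ is continuous on the compact set $[0,1]\times B^2$ (with the limit at $t=0$ verified in the excerpt), the supremum over $t$ and $x$ defining $M_{f_i,y}$ is attained and finite. Joint continuity of $h$ in $(f_i,y,t,x)$---inherited from continuity of $\gamma_{f_i}$ via Lemma~\ref{lem:conformalmapping} and of $c(D(f_i))$ via Corollary~\ref{cor:existcenter}---together with the standard fact that the supremum of a jointly continuous function over a compact fiber varies continuously in parameters, yields continuity of $\varphi_{f_i}(y)$ jointly in $y$ and $f_i$.

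The crucial step is a rescaling identity. For $0\leq y<y'<1$, set $t'=t(1-y')/(1-y)$, so that $t(1-y')x=t'(1-y)x$; a short computation, verified separately on $t>0$ and at the limit $t=0$, gives
$$h_{t,f_i,y'}(x)-c(D(f_i))=\frac{1-y'}{1-y}\bigl(h_{t',f_i,y}(x)-c(D(f_i))\bigr)$$
for all $(t,x)\in[0,1]\times B^2$. As $t$ ranges over $[0,1]$, $t'$ ranges over $[0,(1-y')/(1-y)]\subsetneq[0,1]$, so taking suprema yields
$$M_{f_i,y'}\leq\frac{1-y'}{1-y}\,M_{f_i,y}<M_{f_i,y}.$$
Setting $y=0$ gives $M_{f_i,y'}\leq(1-y')M_{f_i,0}$, whence $\lim_{y\to 1}\varphi_{f_i}(y)=0$. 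On the other hand $h_{1,f_i,0}(x)=\gamma_{f_i}(x)$, so $M_{f_i,0}\geq\sup_{x\in B^2}|\gamma_{f_i}(x)-c(D(f_i))|\geq r_{f_i}$.

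The intermediate value theorem and strict monotonicity then give a unique $y^\ast\in[0,1)$ with $\varphi_{f_i}(y^\ast)=r_{f_i}$, continuous in $f_i$ by joint continuity of $\varphi_{f_i}(y)$ and $r_{f_i}$ together with strict monotonicity (the standard subsequence argument, using $\lim_{y\to 1}\varphi_{f_i}(y)=0<r_{f_i}$ to rule out escape to the boundary). Finally, $M_{f_i,y^\ast}=r_{f_i}$ means every point of $h_{t,f_i,y^\ast}(B^2)$ lies in the closed ball of radius $r_{f_i}$ about $c(D(f_i))$, which by the definition of $r_{f_i}$ as the inscribed radius is contained in $D(f_i)$. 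The main obstacle is setting up the rescaling identity carefully across the piecewise definition of $h$ at $t=0$; once established, it delivers strict monotonicity and the boundary limit in one stroke, and the rest is routine continuity and compactness.
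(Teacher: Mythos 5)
Your proposal is correct and follows essentially the same route as the paper: show $\varphi_{f_i}$ is continuous, strictly decreasing, at least $r_{f_i}$ at $y=0$, and tends to $0$ as $y\to 1$, then invert and invoke continuity of $r_{f_i}$ and $c$. Your rescaling identity $h_{t,f_i,y'}(x)-c(D(f_i))=\tfrac{1-y'}{1-y}\bigl(h_{t',f_i,y}(x)-c(D(f_i))\bigr)$ is a welcome addition, since it actually proves the \emph{strict} monotonicity and the boundary limit that the paper only asserts (nestedness of the images $\phi_{y'}(s)(B^2)\subsetneq\phi_y(s)(B^2)$ alone would give only weak monotonicity of the suprema).
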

	\begin{proof}
		The image of $h_{t,{f_i},y}$ with $y$ fixed is compact and therefore a closed and bounded subset of $\RR^2$.
		Note that $\varphi_{{f_i}}$ is strictly decreasing and continuous with $\lim\limits_{y\to1}M_{{f_i},y}=0$ since $\lim\limits_{y\to1}\phi_y$ is the constant map. Hence, $\varphi_{f_i}$ is an embedding that decreases to $0$ as $y\to1$. 
		Since $c$ and $h_{t,{f_i},y}$ are continuous with respect to $D(f_i)$, the map $[f_i]\mapsto\varphi_{f_i}$ is continuous. There is a $y$ such that $M_{{f_i},y}=r_{f_i}$ since $0<r_{f_i}\leq M_{{f_i},0}$. Thus, the maps $[f_i]\mapsto \varphi^{-1}_{f_i}$ and $[f_i]\mapsto r_{f_i}$ are continuous and we obtain a continuous function $[f_i]\mapsto \varphi^{-1}_{f_i}(r_{f_i})$.
	\end{proof}
	
	For $i$ such that $d_i=0$, precomposing $\gamma_{f_i}$ with the image of $\phi_{\varphi^{-1}_{f_i}(r_{f_i})}$ yields the homotopy $\Gamma_0:[0,1]\times \UJ_m(\RR^2)\to\UEmb_m(B^2,\RR^2)$, defined as
	$$\Gamma_0(s,f^{(0)})\coloneqq\{\gamma_{f_i}\circ(\phi_{\varphi^{-1}_{f_i}(r_{f_i})}(s))\mid d_i=0\}.$$ 
	Then $\Gamma_0$ shrinks all un-nested curves and their interiors so that concatenation of the homotopy $\Gamma_0':[0,1]\times \UJ_m(\RR^2)\to\UEmb_m(B^2,\RR^2)$, defined by 
	$$\Gamma_0'\left(t,f^{(0)}\right)\coloneqq\{h_{t,{f_i},\varphi^{-1}_{f_i}(r_{f_i})}\mid d_i=0\},$$ 
	is well defined and lies within $\bigcup\limits_{d_i=0}D(f_i)$.
	
	For all $j\leq i$ such that $d_i=0$, define the map $\psi_{f_j}\coloneqq(\gamma_{f_i}^{-1}\circ\gamma_{f_j})$, so
	$$\gamma_{f_j}=\gamma_{f_i}\circ\psi_{f_j}=\gamma_{f_i}\circ\left(\phi_{\varphi^{-1}_{f_i}(r_{f_i})}(0)\right)\circ\psi_{f_j}.$$
	We get a homotopy $F_0:[0,1]\times \UJ_{n}(\RR^2)\to\UEmb_{n}(S^1,\RR^2)$ by precomposing $\Gamma_0*\Gamma_0'$ with $\psi_{f_j}$ when $j\leq i$, this yields
	$$F_0(t,f)\coloneqq\left\{\gamma_{t,f_j}\coloneqq\begin{cases}
		\gamma_{f_i}\circ(\phi_{\varphi^{-1}_{f_i}(r_{f_i})}(2t))\circ\psi_{f_j}&0\leq t\leq1/2\\
		h_{2-2t,{f_i},\varphi^{-1}_{f_i}(r_{f_i})}\circ\psi_{f_j}&1/2\leq t\leq1
	\end{cases} \ \Bigg\vert \ d_i=0 \ \& \ j\leq i\right\}.$$ 
	
	Now we do the same procedure to $F_0(1,f)\setminus F_0(1,f^{(0)})$ and leave $F_0\left(1,f^{(0)}\right)$ stationary to obtain $F_1$. The first part of $F_1$, that is $\Gamma_1$, contracts $F_0(1,f)\setminus F_0(1,f^{(0)})$ so that $\Gamma_1'$ lies within $\bigcup\limits_{d_j=1, \ j\leq i} D(h_{0,{f_i},\varphi^{-1}_{f_i}(r_{f_i})}\circ\psi_{f_j})$. We continue this procedure for a total of $d+1$ times and write $[F_0*F_1*\cdots* F_d(t,f)]$ for the image of this homotopy in $\UJ_n(\RR^2)$.
	
	\begin{lemma}\label{lem:mainwelldefineddefret}
		$[F_0*F_1*\cdots* F_d(t,f)]$ is well defined and is a deformation retraction of $\UJ_n(\RR^2)$ to $\CUJ_n(\RR^2)$.
	\end{lemma}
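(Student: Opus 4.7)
The plan is to proceed by induction on $k\in\{0,\ldots,d\}$, verifying that each $F_k$ is a continuous path in $\UJ_n(\RR^2)$, that the endpoints $F_k(1,f)=F_{k+1}(0,f)$ agree, that $F_0(0,f)=f$, and that $F_d(1,f)\in\CUJ_n(\RR^2)$. Since $F_{k+1}$ is built from $F_k(1,f)$ by the same recipe as $F_0$, restricted to the curves of depth greater than $k$ while the previously rounded curves are held stationary, it suffices to analyze $F_0$ in detail.

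For a fixed un-nested index $i$ with $d_i=0$, the domains $\{D(f_{i'}):d_{i'}=0\}$ are pairwise disjoint and the dynamics stay inside each $D(f_i)$, so I may fix $i$. The shrinking phase $\Gamma_0$ transports each nested curve $[f_j]$ with $j\leq i$ via $\gamma_{f_i}\circ\phi_{y_i}(2t)\circ\psi_{f_j}$, a composition of the conformal embeddings $\gamma_{f_i}\colon B^2\to D(f_i)$ and $\psi_{f_j}\colon B^2\to B^2$ with the contracting scaling $\phi_{y_i}(2t)\colon B^2\to B^2$, where $y_i:=\varphi^{-1}_{f_i}(r_{f_i})$. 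Since compositions of embeddings are embeddings and the images $\psi_{f_j}(B^2)$ reproduce the original nesting pattern of the $D(f_j)$'s inside $B^2$, the configurations remain Jordan throughout $\Gamma_0$.

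The main obstacle is the rounding phase $\Gamma_0'$, where the conformal parametrization $\gamma_{f_i}\circ\phi_{y_i}(1)$ is interpolated to its affine linearization $h_{0,f_i,y_i}$, a map which is not itself conformal. The calibration $y_i=\varphi^{-1}_{f_i}(r_{f_i})$ is chosen so that $M_{f_i,y_i}=r_{f_i}$, forcing the image $h_{t,f_i,y_i}(B^2)$ to lie inside the inscribed disk $\overline{B}(c(D(f_i)),r_{f_i})\subseteq D(f_i)$ for every $t$. I also need $h_{t,f_i,y_i}$ to be an embedding at each $t$: for $t\in(0,1]$ this is clear, as it is a composition of scalings and translations with $\gamma_{f_i}$; at $t=0$ it is the affine map $c(D(f_i))+(1-y_i)(d\gamma_{f_i})_o$, which is nondegenerate because the conformal differential $(d\gamma_{f_i})_o$ is a nonzero scaled rotation. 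Postcomposing with the embedding $\psi_{f_j}$ then preserves the mutual nesting of the transported curves, completing the Jordan-configuration verification for $F_0$.

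Endpoint matching $F_k(1,f)=F_{k+1}(0,f)$ is tautological from the inductive definition, and $F_0(0,f)=f$ follows since $\phi_{y_i}(0)=\mathrm{id}_{B^2}$ gives $\gamma_{0,f_j}=\gamma_{f_i}\circ\psi_{f_j}=\gamma_{f_j}$. For the terminal configuration, iterating the above argument through $k=0,\ldots,d$ replaces every curve $[f_j]$ by the image of $S^1$ under an affine map applied to a region in $B^2$, producing a convex (indeed elliptic) region, so $F_d(1,f)\in\CUJ_n(\RR^2)$. Finally, because the entire construction references only the partial order $T(f)$ and the $\Sigma_n$-equivariant center $c$, it makes sense on unlabeled Jordan configurations, yielding the claimed well-defined deformation $[F_0\ast\cdots\ast F_d]$ from $\UJ_n(\RR^2)$ to $\CUJ_n(\RR^2)$.
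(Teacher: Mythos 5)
Your proposal is correct and follows essentially the same route as the paper's proof: reduce to $F_0$, keep curves attached to distinct un-nested indices disjoint because the whole motion stays inside the pairwise disjoint domains $D(f_i)$, keep curves under a common un-nested index disjoint via injectivity of the shrinking map and of $h_{t,f_i,y}$ at each fixed $t$, and observe that the terminal curves are affine images of $S^1$, hence convex. Your explicit justification of injectivity at $t=0$ (nondegeneracy of $(d\gamma_{f_i})_o$ as a nonzero scaled rotation) is a small detail the paper leaves implicit.
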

	\begin{proof}
		We check $F_0$ since each $F_k$ does the same process at different levels of nesting. For all $i\neq k$ with $d_i=0=d_k$ we have $\bigcup\limits_{j\leq i}\gamma_{t,f_j}(S^1)$ and $\bigcup\limits_{j\leq k}\gamma_{t,f_j}(S^1)$ stay disjoint since $\gamma_{f_i}\circ(\phi_{\varphi^{-1}_{f_i}(r_{f_i})}(2t))$ contracts $\{\gamma_{f_j}\mid j\leq i\}$ so that $\{h_{2-2t,{f_i},\varphi^{-1}_{f_i}(r_{f_i})}\circ\psi_{f_j}\mid j\leq i \ \& \ 1/2\leq t\leq 1\}$ lies inside $D(f_i)$ and likewise for $k$. 
		For each fixed $t\in[0,1/2]$ the map $\gamma_{f_i}\circ(\phi_{\varphi^{-1}_{f_i}(r_{f_i})}(2t))$ is injective and for each fixed $t\in[1/2,1]$ the map $h_{2-2t,{f_i},\varphi_{f_i}^{-1}(r_i)}$ is injective, so $\gamma_{t,f_j}(S^1)$ and $\gamma_{t,f_l}(S^1)$ are disjoint when $j,l\leq i$ and $j\neq l$. 
		Hence, $\gamma_{t,f_j}(S^1)$ and $\gamma_{t,f_l}(S^1)$ stay disjoint for all $t$ and all $j\neq l$, so $F_0* F_1*\cdots* F_d(t,f)$ defines a homotopy $\UJ_n(\RR^2)\times[0,1]\to\UEmb_n(S^1,\RR^2)$.
		
		We send each curve to a map that only depends on the image of the curve, which is invariant under precomposition with homeomorphisms of a circle.
		The image of $F_k$ in each procedural step is invariant with respect to the chosen representative of a curve and passing to the equivalence class $[F_0*F_1*\cdots* F_d(t,f)]$ is well defined and yields a homotopy of $\UJ_n(\RR^2)$. In particular, the final image of each curve is convex.
	\end{proof}

	\begin{lemma}[Strong deformation retraction]
		The deformation retraction $[F_0*F_1*\cdots* F_d(t,f)]$ is stationary on $\RUJ_n(\RR^2)$.
	\end{lemma}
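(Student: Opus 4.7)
The plan is to trace the definitions through for a round configuration and observe that every factor that depends on $t$ collapses to a constant. Everything rests on the following computation: if $D(f_i)$ is a round disk of radius $r_{f_i}$ centered at $c(D(f_i))$, then
\[
\gamma_{f_i}(x) = c(D(f_i)) + r_{f_i}\, x.
\]
Indeed, this affine map sends $\interior(B^2)$ conformally onto $\interior(D(f_i))$, carries $o$ to $c(D(f_i))$, and its derivative at $o$ applied to $(1,0)$ is $(r_{f_i},0)$, which is parallel to the normalizing direction $(1,0)$ used in the definition $\gamma_{f_i} = \gamma_{D(f_i),c(D(f_i)),(1,0)}$; by uniqueness in Lemma~\ref{lem:conformalmapping}, the affine map equals $\gamma_{f_i}$.

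Once this explicit formula is in hand, direct substitution (using $\phi_0(1)(x)=x$) yields $h_{t,f_i,0}(x) = c(D(f_i)) + r_{f_i}\,x = \gamma_{f_i}(x)$ for every $t$, and the $t=0$ case agrees because $(d\gamma_{f_i})_o(x) = r_{f_i}\,x$. Consequently $M_{f_i,0} = r_{f_i}$, so $\varphi_{f_i}^{-1}(r_{f_i}) = 0$. With the parameter $y=0$, the embedding $\phi_0(s)(x)=x$ is the identity of $B^2$ for every $s$, which shows that the shrinking phase $\Gamma_0$ is constant in $s$; and the identity $h_{t,f_i,0} = \gamma_{f_i}$ shows that the rounding phase $\Gamma_0'$ is constant in $t$. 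Since $\gamma_{f_i}\circ\psi_{f_j} = \gamma_{f_j}$ by the very definition of $\psi_{f_j}$, we conclude $\gamma_{t,f_j} = \gamma_{f_j}$ for all $t$, so $[F_0(t,f)] = f$ in $\UJ_n(\RR^2)$ throughout the first homotopy.

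I would finish by induction on the nesting depth $k$. The argument above uses only that the relevant curves $\{f_i : d_i = 0\}$ are round. The step $F_k$ carries out exactly the same construction restricted to the curves with $d_i = k$, applied to the output of $F_0 * \cdots * F_{k-1}$. By the inductive hypothesis this output is again $f$, whose depth-$k$ curves are round, so the identical calculation shows $F_k$ is stationary. Concatenating, $[F_0 * F_1 * \cdots * F_d(t,f)] = f$ for every $t$, which is precisely what it means for the deformation retraction to be stationary on $\RUJ_n(\RR^2)$. I do not expect any genuine obstacle: the statement reduces entirely to the affine formula for $\gamma_{f_i}$ on a round disk and the chain of trivializations it triggers in $\phi_y$ and $h_{t,f_i,y}$.
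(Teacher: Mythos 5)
Your proposal is correct and follows essentially the same route as the paper: verify that on a round disk $\gamma_{f_i}$ is the affine map $x\mapsto c(D(f_i))+r_{f_i}x$, deduce $\varphi_{f_i}^{-1}(r_{f_i})=0$ so that $\phi_0$ is the identity and $h_{t,f_i,0}=\gamma_{f_i}$, and conclude $\gamma_{t,f_j}=\gamma_{f_i}\circ\psi_{f_j}=\gamma_{f_j}$ for all $t$. Your added justification of the affine formula via uniqueness in Carath\'eodory's theorem and the explicit induction over nesting depth are details the paper leaves implicit, but the argument is the same.
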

	\begin{proof}
		Suppose $f$ is a round Jordan configuration. For all $d_i=0$ and $j\leq i$, we have $\gamma_{f_j}(x)=c(D(f_j))+r_{f_j}x$ and $h_{t,f_i,y}(x)=c(D(f_i))+r_{f_i}x=\gamma_{f_i}(x)$ for all $t$ and $y$, so $\varphi_{f_i}^{-1}(r_{f_i})=0$ and $\gamma_{t,f_j}=\gamma_{f_i}\circ\psi_{f_j}=\gamma_{f_j}$ for all $t$.
	\end{proof}
	
	\begin{theorem}\label{thrm:preroundingJordanconfigurationsintheplane}
		There is a strong deformation retraction of the configuration space of $n$ Jordan curves in the plane, $\UJ_n(\RR^2)$, to the space of round Jordan configurations in the plane, $\RUJ_n(\RR^2)$. Analogous statements hold for $\J_n(\RR^2)$ onto $\RJ_n(\RR^2)$.
	\end{theorem}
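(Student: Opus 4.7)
Essentially all of the work for this theorem has been carried out in the two preceding lemmas: the homotopy $F(t,f):=[F_0*F_1*\cdots*F_d(t,f)]$ is a deformation retraction of $\UJ_n(\RR^2)$ onto $\CUJ_n(\RR^2)$, and it is stationary on $\RUJ_n(\RR^2)$. Since Theorem~\ref{thrm:convdefretractontojrdncurves} supplies a strong deformation retraction $G:[0,1]\times\CUJ_n(\RR^2)\to\CUJ_n(\RR^2)$ of $\CUJ_n(\RR^2)$ onto $\RUJ_n(\RR^2)$, the plan is simply to concatenate $F$ with $G$.

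Explicitly, I would set
$$H(t,f):=\begin{cases} F(2t,f) & 0\leq t\leq 1/2, \\ G(2t-1,F(1,f)) & 1/2\leq t\leq 1, \end{cases}$$
and verify three points: (i) $H$ is continuous, which at $t=1/2$ follows from $F(1,f)\in\CUJ_n(\RR^2)$ (the last sentence of Lemma~\ref{lem:mainwelldefineddefret}) combined with $G(0,\cdot)=\id$; (ii) $H(0,\cdot)=\id$ and $H(1,\UJ_n(\RR^2))\subseteq\RUJ_n(\RR^2)$, which are inherited from the endpoint behavior of $F$ and $G$; and (iii) $H_t$ fixes $\RUJ_n(\RR^2)$ pointwise for every $t$, since $F$ does so by the preceding lemma and $G$ does so by Theorem~\ref{thrm:convdefretractontojrdncurves}. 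These three facts together are the definition of a strong deformation retraction of $\UJ_n(\RR^2)$ onto $\RUJ_n(\RR^2)$.

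For the analogous labeled statement I would lift the same argument along the quotient $\J_n(\RR^2)\to\UJ_n(\RR^2)$. Each $F_k$ is defined coordinate-by-coordinate on the indices $1,\dots,n$ (the curve $[\gamma_{t,f_j}]$ is defined from the data of the $f_i$ with $j\leq i$ only), and the retraction from Theorem~\ref{thrm:convdefretractontojrdncurves} is also built coordinate-wise via the partial order on $T(f)$. Both are therefore $\Sigma_n$-equivariant, so they lift to homotopies of $\J_n(\RR^2)$ with images in $\CJ_n(\RR^2)$ and $\RJ_n(\RR^2)$ respectively, whose concatenation is the desired strong deformation retraction of $\J_n(\RR^2)$ onto $\RJ_n(\RR^2)$. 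I do not anticipate any serious obstacle at this stage: the delicate content was settled in the preceding lemmas, where one had to check that dragging the nested curves by the conformal parametrization $\gamma_{f_i}$ while rounding the un-nested curve $[f_i]$ never produces a collision. The remaining work is pure bookkeeping of the concatenation and verification that $F(1,\cdot)$ actually lands in the domain of $G$.
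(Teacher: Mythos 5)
Your proposal is correct and follows exactly the paper's argument: the paper's proof is precisely the concatenation of $[F_0*F_1*\cdots*F_d(t,f)]$ (which lands in $\CUJ_n(\RR^2)$ and is stationary on round configurations) with the strong deformation retraction of Theorem~\ref{thrm:convdefretractontojrdncurves}, and the labeled case is handled by observing the construction is independent of labeling, which is the same point as your $\Sigma_n$-equivariance argument. Your write-up simply makes explicit the continuity and endpoint checks that the paper leaves implicit.
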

	\begin{proof}
		Concatenation of $[F_0* F_1*\cdots* F_d(t,f)]$, which is stationary on circles, with the strong deformation retraction in Theorem~\ref{thrm:convdefretractontojrdncurves} yields the first statement of the theorem. 
		This construction of a strong deformation retraction does not depend on a 
		labeling, so the latter statement of the theorem holds as well.
	\end{proof}
	\begin{lemma}\label{lem:nonequidefretract}
		The deformation retraction $[F_0*\cdots* F_d(t,f)]$ is $\Con(\RR^2)$-equivariant.
	\end{lemma}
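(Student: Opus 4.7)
The plan is to verify $\Con(\RR^2)$-equivariance one stage at a time. Fix $g\in\Con(\RR^2)$; by Lemma~\ref{lem:existingcenter}, $g$ is an affine Euclidean similarity with some scale factor $\lambda>0$. Since $g$ is a homeomorphism, it preserves nesting, hence the abstract tree $T(g\cdot f)$ equals $T(f)$ and the integers $d_j,d$ organizing the construction are unchanged. Because each $F_k$ performs the same procedure at a different nesting depth, it suffices to prove the equivariance of $F_0$ and then iterate; I focus on this case.

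The first step is to record the transformation law of each ingredient in the definition of $F_0$. The center $c$ is $\Con(\RR^2)$-equivariant by Corollary~\ref{cor:existcenter}, so $c(D(g(f_i)))=g(c(D(f_i)))$. By the uniqueness statement of Lemma~\ref{lem:conformalmapping}, the map $g\circ\gamma_{f_i}$ is a conformal parametrization of $D(g(f_i))$ sending $o$ to the correct center, so it agrees with $\gamma_{g(f_i)}$ after precomposing with the unique rotation $R_i=R_i(g,f_i)$ of $B^2$ that aligns the derivative directions; that is,
\[
\gamma_{g(f_i)}=g\circ\gamma_{f_i}\circ R_i,
\qquad
\psi_{g(f_j)}=R_i^{-1}\circ\psi_{f_j}\circ R_j \quad (j\leq i).
\]
Similarities scale distances uniformly, so $r_{g(f_i)}=\lambda\,r_{f_i}$, and a short direct computation using the identity above yields $h_{t,g(f_i),y}=g\circ h_{t,f_i,y}\circ R_i$ and therefore $M_{g(f_i),y}=\lambda\,M_{f_i,y}$. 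I conclude $\varphi_{g(f_i)}=\lambda\,\varphi_{f_i}$, whence
\[
\varphi_{g(f_i)}^{-1}(r_{g(f_i)})=\varphi_{f_i}^{-1}(r_{f_i}),
\]
so the shrinking parameter $y$ used in $\Gamma_0$ and $\Gamma_0'$ is $g$-invariant.

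The decisive algebraic fact is that each $\phi_y(s)$ is the radial scaling $x\mapsto(1-ys)x$, and therefore commutes with every rotation of $B^2$. Combining this with the transformation laws above, for $0\leq t\leq 1/2$ one obtains
\[
\gamma_{t,g(f_j)}
= g\circ\gamma_{f_i}\circ R_i\circ\phi_y(2t)\circ R_i^{-1}\circ\psi_{f_j}\circ R_j
= g\circ\gamma_{t,f_j}\circ R_j,
\]
and the analogous identity holds for $1/2\leq t\leq 1$ with $h_{2-2t,f_i,y}$ in place of $\gamma_{f_i}\circ\phi_y(2t)$. Since $F_0(t,f)$ is taken as an element of $\UJ_n(\RR^2)$, which forgets parametrizations of $S^1$, precomposing each curve with the rotation $R_j$ is invisible in the equivalence class. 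Hence $F_0(t,g\cdot f)=g\cdot F_0(t,f)$ in $\UJ_n(\RR^2)$, and iterating the same argument at each nesting depth gives the claim for the full concatenation $F_0*\cdots*F_d$.

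The main obstacle will be the bookkeeping of the rotations $R_i$ that arise from the nonuniqueness of direction in Carath\'eodory's theorem: one must check that they propagate through the nested compositions $\gamma_{f_i}\circ\phi_y\circ\psi_{f_j}$ and $h_{t,f_i,y}\circ\psi_{f_j}$ in such a way that the outermost factor is always a postcomposed $g$ and any residual rotation lives only on the $S^1$-side, where the equivalence class absorbs it. Once this commutation analysis is carried out, $\Con(\RR^2)$-equivariance of everything else follows from Corollary~\ref{cor:existcenter} and the uniform scaling of distances under similarities.
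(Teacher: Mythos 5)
Your proof is correct and follows essentially the same route as the paper's: equivariance of the centers, the observation that $\gamma_{g(f_i)}^{-1}\circ g\circ\gamma_{f_i}$ is a rotation of $B^2$, the invariance of $\varphi_{f_i}^{-1}(r_{f_i})$ from the uniform scaling of distances, the commutation of the radial maps $\phi_y(s)$ (and the dilation in $h_{t,f_i,y}$) with rotations about $o$, and the absorption of the residual rotation $R_j$ into the unparametrized equivalence class. The only difference is organizational: you treat a general similarity with scale factor $\lambda$ at once, whereas the paper argues separately for conformal linear maps and translations as generators of $\Con(\RR^2)$.
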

	\begin{proof}
		Let $f=\{[f_1],\dots,[f_n]\}$ be a Jordan configuration. It suffices to check that the image of each element of $[F_0(t,f)]$ is $\Con(\RR^2)$-equivariant. Let $g\in\Con(\RR^2)$, and $i,j\in\{1,\dots,n\}$ such that $d_i=0$ and $j\leq i$. 
		
		First we note that the partial ordering of the indices of Jordan configurations is invariant under conformal maps. This is true since the conformal map $g$ does not affect the nesting of curves, so the abstract tree of a Jordan configuration stays the same after $g$, and hence the partial ordering defined by the abstract tree does not change. 
		
		Next we show equivariance of $h_{t,f_i,y}$ and invariance of the scalar $\varphi_{{f_i}}^{-1}(r_i)$. Checking equivariance is again similar to Belegradek and Ghomi's check in \cite[Prop.~4.1]{belegradek2025point} which follows from equivariance of our chosen centers, $g(\gamma_{f_j}(o))=g(c(D(f_j)))=c(g(D(f_j)))=\gamma_{g(f_j)}(o)$. By Lemma~\ref{lem:conformalmapping}, there is a unique map $B^2\to\RR^2$ that is conformal on $\interior(B^2)$ with image $g(\gamma_{f_j}(B^2))=\gamma_{g(f_j)}(B^2)$ and center $g(\gamma_{f_j}(o))=\gamma_{g(f_j)}(o)$, up to rotation of the derivative at $o$, that is $\gamma_{g(f_j)}^{-1}\circ g\circ \gamma_{f_j}$ is an element of $O(2)$. Hence, we have $g(\gamma_{f_j}(tS^1))=\gamma_{g(f_j)}(tS^1)$ for all $t$. Since $\Con(\RR^2)$ is generated by conformal linear maps and translations, we have two cases to check.
		For $t\in(0,1]$, if $g$ is a conformal linear map then
		\begin{align*}
			g(h_{t,f_i,y})&=g\left(c(D(f_i))+\frac{\gamma_{f_i}\left(t(\phi_{y}(1))\right)-c(D(f_i))}{t}\right)\\
			&=g(c(D(f_i)))+\frac{\left(g\circ\gamma_{f_i}\right)\left(t(\phi_{y}(1))\right)-g(c(D(f_i)))}{t}\\
			&=c(g(D(f_i)))+\frac{\left(\gamma_{g(f_i)}\circ (\gamma_{g(f_i)}^{-1}\circ g\circ\gamma_{f_i})\right)\left(t(\phi_{y}(1))\right)-c(g(D(f_i)))}{t}\\
			&=c(D(g(f_i)))+\frac{\gamma_{g(f_i)}\left(t(\phi_{y}(1))\circ(\gamma_{g(f_i)}^{-1}\circ g\circ\gamma_{f_i})\right)-c(D(g(f_i)))}{t}\tag{*}\label{eq:rotation}\\
			&=h_{t,g(f_i),y}\circ(\gamma_{g(f_i)}^{-1}\circ g\circ\gamma_{f_i}),
		\end{align*} so $g(h_{t,f_i,y})(S^1)=h_{t,g(f_i),y}(S^1)$ for all $y$; if $g$ is translation by $(a,b)$ then
		\begin{align*}
			g(h_{t,f_i,y})	&=(a,b)+c(D(f_i))+\frac{\gamma_{f_i}\left(t(\phi_{y}(1))\right)-c(D(f_i))}{t}\\
			&=(a,b)+c(D(f_i))+\frac{(a,b)+\gamma_{f_i}\left(t(\phi_{y}(1))\right)-(a,b)-c(D(f_i))}{t}\\
			&=c(D(g(f_i)))+\frac{\gamma_{g(f_i)}\left(t(\phi_{y}(1))\right)-c(D(g(f_i)))}{t}\\
			&=h_{t,g(f_i),y},
		\end{align*} so $g(h_{t,f_i,y})(S^1)=h_{t,g(f_i),y}(S^1)$ for all $y$, and by continuity we have $\Con(\RR^2)$-equivariance of $h_{t,f_i,y}(S^1)$ for all $t$. 
		Note that line \ref{eq:rotation} is due to the fact that rotation and dilation commute when centered about the origin, $o$.
		Since conformal maps of the plane preserve ratios of distances, for any $g\in\Con(\RR^2)$ we have $$\frac{\varphi_{f_i}(y)}{r_{f_i}}
		=\frac{\varphi_{g(f_i)}(y)}{r_{g(f_i)}}.$$
		Since $\varphi^{-1}_{f_i}(r_{f_i})$ is the unique $y$ value such that $\varphi_{f_i}(y)=r_{f_i}$, it is also the unique $y$ value such that $\varphi_{g(f_i)}(y)=r_{g(f_i)}$ and hence, invariant under conformal transformations, that is $\varphi^{-1}_{f_i}(r_{f_i})=\varphi^{-1}_{g(f_i)}(r_{g(f_i)})$.
		
		Now that we see that the scalar $\varphi_{f_i}^{-1}(r_i)$ is $\Con(\RR^2)$-invariant we move to the equivariance of $F_0$. Suppose $g$ is a conformal linear map, 
		if $t\in[0,1/2]$ then
		\begin{align*}
			g(\gamma_{t,f_j})&=g(\gamma_{f_i}\circ\phi_{\varphi^{-1}_{f_i}(r_{f_i})}(2t)\circ\psi_{f_j})\\
			&=\gamma_{g(f_i)}\circ(\gamma_{g(f_i)}^{-1}\circ g\circ \gamma_{f_i})\circ\phi_{\varphi^{-1}_{f_i}(r_{f_i})}(2t)\circ\psi_{f_j}\\
			&=\gamma_{g(f_i)}\circ\phi_{\varphi^{-1}_{f_i}(r_{f_i})}(2t)\circ(\gamma_{g(f_i)}^{-1}\circ g\circ \gamma_{f_i})\circ\psi_{f_j}\tag{**}\label{rotdil}\\
			&=\gamma_{g(f_i)}\circ\phi_{\varphi^{-1}_{f_i}(r_{f_i})}(2t)\circ(\gamma_{g(f_i)}^{-1}\circ g\circ \gamma_{f_i})\circ(\gamma_{f_i}^{-1}\circ\gamma_{f_j})\\
			&=\gamma_{g(f_i)}\circ\phi_{\varphi^{-1}_{f_i}(r_{f_i})}(2t)\circ(\gamma_{g(f_i)}^{-1}\circ g\circ\gamma_{f_j})\\
			&=\gamma_{g(f_i)}\circ\phi_{\varphi^{-1}_{f_i}(r_{f_i})}(2t)\circ\gamma_{g(f_i)}^{-1}\circ (\gamma_{g(f_j)}\circ(\gamma_{g(f_j)}^{-1}\circ g\circ \gamma_{f_j}))\\
			&=\gamma_{g(f_i)}\circ\phi_{\varphi^{-1}_{f_i}(r_{f_i})}(2t)\circ\psi_{g(f_j)}\circ(\gamma_{g(f_j)}^{-1}\circ g\circ \gamma_{f_j})\\
			&=\gamma_{t,g(f_j)}\circ(\gamma_{g(f_j)}^{-1}\circ g\circ \gamma_{f_j}),
		\end{align*}
		where line~\ref{rotdil} again happens because rotation and dilation commute when both are centered at $o$, so we have $g(\gamma_{t,f_j}(S^1))=\gamma_{t,g(f_j)}(S^1)$;
		if $t\in[1/2,1)$ then
		\begin{align*}
			g(\gamma_{t,f_j})=&g(h_{2-2t,f_i,\varphi_{f_i}^{-1}(r_{f_i})}\circ\psi_{f_j})\\
			=&h_{2-2t,g(f_i),\varphi_{g(f_i)}^{-1}(r_{g(f_i)})}\circ(\gamma_{g(f_i)}^{-1}\circ g\circ\gamma_{f_i})\circ\psi_{f_j}\\
			=&h_{2-2t,g(f_i),\varphi_{g(f_i)}^{-1}(r_{g(f_i)})}\circ\psi_{g(f_j)}\circ(\gamma_{g(f_j)}^{-1}\circ g\circ\gamma_{f_j})\\
			=&\gamma_{t,g(f_j)}\circ(\gamma_{g(f_j)}^{-1}\circ g\circ\gamma_{f_j}),
		\end{align*}
		so we have $g(\gamma_{t,g(f_j)}(S^1))=\gamma_{t,g(f_j)}(S^1)$. 
		Suppose $g$ is translation by $(a,b)$, if $t\in[0,1/2]$ then
		\begin{align*}
			g(\gamma_{t,f_j})&=(a,b)+\gamma_{f_i}\circ\phi_{\varphi^{-1}_{f_i}(r_{f_i})}(2t)\circ\psi_{f_j}\\
			&=\gamma_{g(f_i)}\circ\phi_{\varphi^{-1}_{g(f_i)}(r_{g(f_i)})}(2t)\circ\psi_{g(f_j)}\\
			&=\gamma_{t,g(f_j)};
		\end{align*}
		if $t\in[1/2,1)$ then
		\begin{align*}
			g(\gamma)_{t,f_j}&=(a,b)+h_{2-2t,f_i,\varphi_{f_i}^{-1}(r_{f_i})}\circ\psi_{f_j}\\
			&=h_{2-2t,g(f_i),\varphi_{g(f_i)}^{-1}(r_{g(f_i)})}\circ\psi_{g(f_j)}\\
			&=\gamma_{t,g(f_j)}.
		\end{align*}
		By continuity in $t$, we have equivariance.
	\end{proof}
	
	\begin{theoremB}[Part 1]
		There is a $\Con(\RR^2)$-equivariant strong deformation retraction of  $\UJ_n(\RR^2)$ onto $\RUJ_n(\RR^2)$. Analogous statements hold for $\J_n(\RR^2)$ onto $\RJ_n(\RR^2)$.
	\end{theoremB}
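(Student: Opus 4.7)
The plan is to chain together the two $\Con(\RR^2)$-equivariant strong deformation retractions already built in Section 4. First, apply the homotopy $[F_0 * F_1 * \cdots * F_d]$ constructed before Lemma~\ref{lem:mainwelldefineddefret}, which by that lemma is a well-defined homotopy from $\UJ_n(\RR^2)$ whose terminal image lies in $\CUJ_n(\RR^2)$. Second, follow it with the homotopy from Theorem~\ref{thrm:convdefretractontojrdncurves}, which strong deformation retracts $\CUJ_n(\RR^2)$ onto $\RUJ_n(\RR^2)$.

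Concretely, denote the first homotopy by $H_1:\UJ_n(\RR^2)\times[0,1]\to\UJ_n(\RR^2)$ and the second by $H_2:\CUJ_n(\RR^2)\times[0,1]\to\CUJ_n(\RR^2)$, and form their concatenation $H:=H_1 * H_2$ in the usual way, reparametrizing time. To check that $H$ is a $\Con(\RR^2)$-equivariant strong deformation retraction onto $\RUJ_n(\RR^2)$ I would verify three things. Equivariance follows from Lemma~\ref{lem:nonequidefretract} for $H_1$ and from the equivariance statement in Theorem~\ref{thrm:convdefretractontojrdncurves} for $H_2$; since conformal transformations preserve both properties, equivariance is preserved under concatenation. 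The stationarity on $\RUJ_n(\RR^2)$ follows from the stationarity lemma preceding Theorem~\ref{thrm:preroundingJordanconfigurationsintheplane}, which ensures $H_1$ fixes round configurations pointwise, together with the stationarity of $H_2$ on round configurations in Theorem~\ref{thrm:convdefretractontojrdncurves}. The retraction property at time $1$ is immediate from the terminal image of each piece.

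For the labeled analogue, I would observe that neither construction uses a choice of labeling in an essential way: the definitions of the maps $\gamma_{t,f_j}$, the scaling parameter $\varphi_{f_i}^{-1}(r_{f_i})$, the centers $c(D(f_i))$, and the piecewise linear rounding homotopy of Theorem~\ref{thrm:convdefretractontojrdncurves} are all natural with respect to the $\Sigma_n$-action permuting curves. Hence the $\Sigma_n$-equivariant lift of $H$ to $\J_n(\RR^2)$ gives a strong deformation retraction of $\J_n(\RR^2)$ onto $\RJ_n(\RR^2)$, and this lift remains $\Con(\RR^2)$-equivariant since $\Con(\RR^2)$ acts on curves individually and commutes with permutation of tuple entries.

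The only subtlety I anticipate is the verification that the reparametrized concatenation preserves continuity in the configuration variable; this is routine given that both $H_1$ and $H_2$ are continuous and that the matching condition at $t=1/2$ holds because $H_1(f,1)\in\CUJ_n(\RR^2)$ is exactly the domain of $H_2$, so no real obstacle arises. Everything else is bookkeeping from the machinery of Section 4.
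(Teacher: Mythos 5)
Your proposal is correct and is essentially identical to the paper's proof: the paper also obtains the result by concatenating $[F_0*\cdots*F_d]$ (equivariant by Lemma~\ref{lem:nonequidefretract}, stationary on round configurations by the preceding lemma) with the equivariant strong deformation retraction of Theorem~\ref{thrm:convdefretractontojrdncurves}, and handles the labeled case by noting the construction is independent of labeling. Your extra checks on continuity of the concatenation and the matching condition at $t=1/2$ are fine and consistent with Lemma~\ref{lem:mainwelldefineddefret}.
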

	\begin{proof}
		Concatenation of $[F_0*\cdots* F_d(t,f)]$ with the $\Con(\RR^2)$-equivariant strong deformation retraction in Theorem~\ref{thrm:convdefretractontojrdncurves} yields the first statement of the theorem. 
		This construction of a $\Con(\RR^2)$-equivariant strong deformation retraction does not depend on a 
		labeling, so the latter statement of the theorem holds as well.
	\end{proof}
	
	\begin{remark}
		Theorem~\ref{thrm:convdefretractontojrdncurves} is stronger than what is needed in the proof of Theorem~\ref{thrm:preroundingJordanconfigurationsintheplane} and Theorem~\ref{thrm:roundingcurvesinsurfaces}~Part~1 because the deformation retraction $[F_0*\cdots* F_d]$ transforms Jordan configurations into ellipses.
	\end{remark}
	
	\begin{corollary}\label{cor:he}
		$\UJ_n(\RR^2)$ and $\UJ_n(\interior(B^2))$ are homotopy equivalent to $\UConf_n(S^1,\RR^2)$. Similarly,
		$\J_n(\RR^2)$ and $\J_n(\interior(B^2))$ are homotopy equivalent to $\Conf_n(S^1,\RR^2)$.
	\end{corollary}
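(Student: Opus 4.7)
The plan is to combine Theorem~\ref{thrm:roundingcurvesinsurfaces}~Part~1 with Theorem~\ref{thm:embeddingconfspaces}, and then reduce the open disk case to the plane case by a purely topological homeomorphism. The statement is essentially a two-step chain of equivalences, so I expect no serious obstacle; the only subtle point is verifying that the ``bounds a disk'' condition transfers correctly under a non-isometric homeomorphism $\interior(B^2)\to\RR^2$.

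For the $\RR^2$ case: Theorem~\ref{thrm:roundingcurvesinsurfaces}~Part~1 gives a strong deformation retraction of $\UJ_n(\RR^2)$ onto $\RUJ_n(\RR^2)$, so these two spaces are homotopy equivalent. Theorem~\ref{thm:embeddingconfspaces} identifies $\RUJ_n(\RR^2)$ with $\UConf_n(S^1,\RR^2)$ as an embedded subspace. Composing yields $\UJ_n(\RR^2)\simeq\UConf_n(S^1,\RR^2)$. The labeled statement $\J_n(\RR^2)\simeq\Conf_n(S^1,\RR^2)$ follows in exactly the same way from the labeled halves of both results.

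For the open disk case, the key observation is that the space $\UJ_n$ depends only on the underlying topological surface (the compact-open topology on $\Emb(S^1,-)$ and $\Emb(B^2,-)$ is topological, not metric). Fix any homeomorphism $h\colon\interior(B^2)\to\RR^2$. Post-composition with $h$ gives a continuous bijection $\Emb(S^1,\interior(B^2))\to\Emb(S^1,\RR^2)$ whose inverse is post-composition with $h^{-1}$, so it is a homeomorphism; the analogous statement holds for $\Emb(B^2,-)$. These maps are equivariant with respect to the $\Homeo(S^1)^n$ and $\Sigma_n$ actions, and so descend to the quotients to give a homeomorphism $\UJ_n(\interior(B^2))\to\UJ_n(\RR^2)$, once one checks that the ``bounds a disk'' condition is preserved in both directions. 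This holds because any simple closed curve $C\subset\interior(B^2)$, viewed as a curve in $\RR^2$, bounds a unique bounded complementary component by the Jordan curve theorem, and this bounded component lies entirely inside $\interior(B^2)$ since the unbounded complementary component of $C$ in $\RR^2$ contains all of $\RR^2\setminus B^2$; conversely $h$ sends Jordan domains to Jordan domains. Chaining with the plane case gives $\UJ_n(\interior(B^2))\simeq\UConf_n(S^1,\RR^2)$, and the identical argument handles the labeled version.
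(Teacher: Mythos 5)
Your proposal is correct and follows essentially the same route as the paper, which simply cites Theorem~\ref{thm:embeddingconfspaces} together with the strong deformation retraction onto round configurations. The only difference is that you spell out the reduction of the $\interior(B^2)$ case to the $\RR^2$ case via a homeomorphism (including the check that the ``bounds a disk'' condition transfers), a step the paper leaves implicit; your verification of that step is sound.
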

	\begin{proof}
		This follows immediately from Theorems~\ref{thm:embeddingconfspaces} ~and~\ref{thrm:preroundingJordanconfigurationsintheplane}.
	\end{proof}
	
	Restricting this homotopy equivalence to the connected components allows us to define the following.
	
	\begin{definition}\label{dfn:jt}
		Let $X$ be homeomorphic to $\RR^2$. For a tree $T$, define the \emph{space of $T$-Jordan configurations}, $\UJ_T(X)$, to be the connected component of $\UJ_n(X)$ homotopy equivalent via the homotopy equivalence from Corollary~\ref{cor:he}, to $\UConf_T(S^1,\RR^2)$. For a labeled tree $T$, define the \emph{space of labeled $T$-Jordan configurations}, $\J_T(X)$, to be the connected component of $\J_n(X)$ homotopy equivalent via the homotopy equivalence from Corollary~\ref{cor:he}, to $\Conf_T(S^1,\RR^2)$.
	\end{definition}
	
	
	\subsection{Jordan configurations in non-positively curved surfaces}\label{sec:g>0}
	
	
	The following is a generalization of Belegradek and Ghomi's second application of Theorem~\ref{thrm:extequcenter} \cite[Thm. 4.2]{belegradek2025point}. Here we take their construction of an $\Iso(X)$-equivariant deformation retraction of $\mathcal{D}(X)$ onto the space of round Jordan domains in $X$, and update it to $\Iso(X)$-equivariantly deformation retract $\UJ_n(X)$ onto $\RUJ_n(X)$.
	
	\begin{theoremB}[Part 2]
		Let $X$ be a geodesically complete connected surface of constant non-positive curvature. Then $\UJ_n(X)$ admits an $\Iso(X)$-equivariant strong deformation retraction onto $\RUJ_n(X)$. Analogous statements hold for $\J_n(X)$ onto $\RJ_n(X)$.
	\end{theoremB}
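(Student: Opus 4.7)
The plan is to reduce to the universal cover via the Killing-Hopf Theorem~\ref{killhopfthrm} and then adapt Part 1, using the exponential map in place of the Riemann parametrization. Since $X$ has constant non-positive curvature and is geodesically complete, Killing-Hopf presents $X$ as $\widetilde{X}/\Gamma$ with $\widetilde{X}\in\{\RR^2,\HH^2\}$ and $\Gamma$ a discrete, fixed-point-free group of isometries. Every curve in a Jordan configuration bounds a disk, is therefore null-homotopic, and together with its interior lifts uniquely up to $\Gamma$ to $\widetilde{X}$. Because $\Iso(X)$ is identified with $N_{\Iso(\widetilde{X})}(\Gamma)/\Gamma$, it suffices to construct an $\Iso(\widetilde{X})$-equivariant strong deformation retraction of $\UJ_n(\widetilde{X})$ onto $\RUJ_n(\widetilde{X})$; this then descends to $X$. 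For $\widetilde{X}=\RR^2$ this is precisely Part 1.

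The substantive case is $\widetilde{X}=\HH^2$. I would first fix the $\Iso(\HH^2)$-equivariant center $c$ from Corollary~\ref{cor:existcenter}. By the Cartan--Hadamard theorem each exponential map $\exp_p\colon T_p\HH^2\to\HH^2$ is a diffeomorphism, and because it is a radial isometry, Euclidean circles in $T_p\HH^2$ around the origin correspond under $\exp_p$ to hyperbolic circles in $\HH^2$ around $p$. Moreover $\exp$ is natural with respect to isometries: $g\circ\exp_p=\exp_{g(p)}\circ dg_p$ for every $g\in\Iso(\HH^2)$, with $dg_p$ a linear isometry. Given a Jordan configuration $f=\{[f_1],\dots,[f_n]\}$ with centers $p_i=c(D(f_i))$, I would mimic the levelwise construction $F_0*F_1*\cdots*F_d$ of Part 1: at each level $k$, for each curve $f_i$ with $d_i=k$, pull back $D(f_i)$ and everything nested inside it to $T_{p_i}\HH^2\cong\RR^2$ via $\exp_{p_i}^{-1}$, apply the corresponding slice of Part 1's deformation there, and push the result back to $\HH^2$ via $\exp_{p_i}$. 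After all $d+1$ levels, a curve-by-curve application of Theorem~\ref{thrm:convdefretractontojrdncurves} pulled back through each $\exp_{p_i}$ rounds each $f_i$ to a Euclidean circle about $0\in T_{p_i}\HH^2$, which pushes forward to a hyperbolic circle about $p_i$. Equivariance under $\Iso(\HH^2)$ follows from the equivariance of $c$, the naturality of $\exp$, and the equivariance of Part 1 applied in each tangent plane; the homotopy is stationary on $\RUJ_n(\HH^2)$ because a hyperbolic circle about $p_i$ pulls back to a Euclidean circle about $0\in T_{p_i}\HH^2$, on which Part 1 is stationary.

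The main obstacle is coordinating the collection of tangent planes as the homotopy evolves: an inner curve $f_j\subset D(f_i)$ lives in $T_{p_i}\HH^2$ during the level-$d_i$ stage but moves to its own tangent plane $T_{p_j}\HH^2$ during the level-$d_j$ stage, and the centers $p_j$ themselves drift continuously with $t$. Verifying continuity of the composite homotopy and preservation of pairwise disjointness through this sequential, multi-chart construction is the bookkeeping analogue of Lemma~\ref{lem:mainwelldefineddefret} in the plane, and should follow because each stage acts as a diffeomorphism on the relevant disk $D(f_i)$ and leaves the other disks untouched.
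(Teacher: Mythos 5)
Your proposal is correct and follows essentially the same route as the paper: lift to the universal cover via Killing--Hopf, use the $\Iso(\widetilde{X})$-equivariant center and the inverse exponential map at each center to transport each level of the nesting to a tangent plane, apply the planar deformation of Part~1 (together with Theorem~\ref{thrm:convdefretractontojrdncurves}) there, push forward, iterate over nesting depth, and deduce equivariance from the naturality $g\circ\exp_p=\exp_{g(p)}\circ dg_p$; descent to $X$ uses that the whole homotopy stays inside the disks $D(f_i)$ with $d_i=0$. The "bookkeeping" concern you flag at the end is handled in the paper exactly as you suggest, by noting each stage is supported in the relevant disks and leaves already-rounded outer curves stationary.
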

	\begin{definition}[Exponential map]\label{def:exp}
		Let $M$ be a complete connected Riemannian surface. Given a vector $v\in T_pM$, there is a unique geodesic starting at $p$, going in the direction of $v$, with speed $|v|$. The exponential map at $p$, written $\exp_p:T_pM\to M$, sends a vector $v\in T_pM$ to the terminal point of the unique geodesic associated to $v$ after traveling one unit of time.
	\end{definition}
	\begin{proof}[Proof of Theorem B (Part 2)]
		First, by Theorem~\ref{killhopfthrm} we may work with $\widetilde{X}$, the complete universal Riemannian cover of $X$ with constant curvature, where $\Iso(X)$ lifts to $G\leq\Iso(\widetilde{X})$. By Corollary~\ref{cor:existcenter} there is an $\Iso(\widetilde{X})$-equivariant center $c$ on $\mathcal{D}(\widetilde{X})$ which coincides with centers of circles. For $f\in \UJ_n(\widetilde{X})$ there is still the strict partial ordering defined by the abstract tree of $f$. 
		
		For $d_i=0$, we use the inverse exponential map $\exp^{-1}_{c(D(f_i))}$ to lift the curves $[f_j]$ with $j\leq i$ to the tangent plane $T_{c(D(f_i))}(\widetilde{X})$. 
		Using $F_0$ from Subsection~\ref{sec:proof} and Theorem~\ref{thrm:convdefretractontojrdncurves} on $\exp^{-1}_{c(D(f_i))}(\{[f_j]:j\leq i\})$ we get a deformation of curves contained in $\exp^{-1}_{c(D(f_i))}(D(f_i))$ that is constant if $D(f_i)$ is already round. Applying $\exp_{c(D(f_i))}$ yields a deformation of $\{[f_j]:j\leq i\}$ contained in $D(f_i)$ that depends continuously on $f$, makes $[f_i]$ round, and is constant if $[f_i]$ is already round. Then we repeat this procedure to the resulted deformation of $[f_j]$ such that $j\leq k$ for $d_k=1$, leaving the rounded $[f_i]$ stationary. We repeat this procedure for a total of $d+1$ times obtaining a strong deformation retraction of $\UJ_n(\widetilde{X})$ onto $\RUJ_n(\widetilde{X})$.
		
		Next, we see that this strong deformation retraction is equivariant by checking each step $i$. Let $\rho$ be an isometry of $\widetilde{X}$. Then $\rho$ ascends to the pushforward $d\rho_{c(D(f_i))}:T_{c(D(f_i))}\widetilde{X}\to T_{\rho(c(D(f_i)))}\widetilde{X}=T_{c(D(\rho(f_i)))}\widetilde{X}$ which is again an isometry. However, the deformation on the tangent planes from Theorem~\ref{thrm:roundingcurvesinsurfaces}~Part~1 is $\Iso(\RR^2)$-equivariant, so the deformation of $f$ corresponds through $\rho$ to the deformation of $\rho(f)$.
		
		Since this $\Iso(\widetilde{X})$-equivariant strong deformation retraction is contained in $\bigcup_{d_i=0}D(f_i)$ it descends to an $\Iso(X)$-equivariant strong deformation retraction of $\UJ_n(X)$ onto $\RUJ_n(X)$.
	\end{proof}
	\begin{remark}
		Belegradek and Ghomi's deformation retraction sends Jordan domains to the largest round Jordan domain contained in the original domain centered at the domain's center. Our deformation is only constant on round Jordan configurations and does not guarantee that an un-nested curve is sent to the largest circle centered at the curve's center contained in the curve's original bounded domain. However, these deformation retractions could be amended to send Jordan configurations to an analogue of the largest round Jordan configuration contained within the domains the curves bound.
	\end{remark}
	
	\begin{definition}
		The \emph{injectivity radius} of a point $x\in X$ is the radius of the largest ball centered at $x$ such that the exponential map is a diffeomorphism.
	\end{definition}
	
	\begin{corollary}\label{cor:hebetweenconfpointandnonnestedcomponents}
		There is a homotopy equivalence between $\UConf_n(*,X)$ and the connected component(s) of $\UJ_n(X)$ with no nested curves.
	\end{corollary}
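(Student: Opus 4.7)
The plan is to stack the deformation retraction of Theorem~\ref{thrm:roundingcurvesinsurfaces}~Part~2 with a small secondary retraction that equalizes all radii to a continuously chosen common value. Theorem~B restricts to a strong deformation retraction of the non-nested component $U\subseteq\UJ_n(X)$ onto its intersection $V:=U\cap\RUJ_n(X)$, so it suffices to exhibit a homotopy equivalence $V\simeq\UConf_n(*,X)$. Since a round Jordan configuration is determined by its centers and radii, taking centers gives a natural continuous surjection $\Phi:V\to\UConf_n(*,X)$, and the strategy is to deformation retract $V$ onto a subspace $S$ that $\Phi$ maps homeomorphically to $\UConf_n(*,X)$.

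For $p=\{p_1,\dots,p_n\}\in\UConf_n(*,X)$, define
$$\varepsilon(p):=\tfrac{1}{4}\min\!\left(\min_{i\ne j}d_X(p_i,p_j),\;\min_{i}\operatorname{inj}(p_i)\right),$$
where $\operatorname{inj}(p_i)$ denotes the injectivity radius at $p_i$. Then $\varepsilon:\UConf_n(*,X)\to(0,\infty)$ is continuous and is small enough that whenever all $r_i\le\varepsilon(p)$ the round circles $\{(p_i,r_i)\}$ are pairwise disjoint, un-nested, and each bounds a geodesic disk (since $\exp_{p_i}$ is a diffeomorphism on its ball of radius $\operatorname{inj}(p_i)$). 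Let $S\subseteq V$ consist of configurations whose every radius equals $\varepsilon(p)$; then $p\mapsto\{(p_i,\varepsilon(p))\}_{i=1}^n$ is continuous and two-sided inverse to $\Phi|_S$, so $\Phi|_S$ is a homeomorphism.

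The retraction of $V$ onto $S$ is a two-step concatenation of straight-line radius homotopies. First shrink each radius $r_i$ linearly to $\min(r_i,\varepsilon(p))$: radii only decrease, so disjointness and disk-bounding are automatically preserved. Second, linearly interpolate each radius from $\min(r_i,\varepsilon(p))$ to $\varepsilon(p)$; every intermediate radius stays at most $\varepsilon(p)$, so by the choice of $\varepsilon$ the circles remain disjoint and each bounds a disk throughout. The whole construction depends only on unordered sets of center--radius pairs, so it descends to $V$ and yields a strong deformation retraction of $V$ onto $S\cong\UConf_n(*,X)$; composing with the retraction of Theorem~B gives the claimed homotopy equivalence.

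The main technical point is continuity of $\varepsilon$, which reduces to continuity of the injectivity radius function on a complete Riemannian surface; this is a standard fact and presents the only nontrivial input. Everything else is formal: the defining inequality $r_i\le\varepsilon(p)$ is stable under individual shrinking and under convex combinations capped by $\varepsilon(p)$, and these are precisely the two moves used in the concatenated homotopy.
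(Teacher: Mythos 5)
Your proposal is correct and follows essentially the same route as the paper's proof: both reduce via Theorem~\ref{thrm:roundingcurvesinsurfaces} to the round, non-nested component, take centers as the map to $\UConf_n(*,X)$, pick a uniform small radius from the pairwise distances and injectivity radii, and homotope by first shrinking oversized radii and then growing undersized ones. The only differences are cosmetic (the constant $1/4$ versus the paper's $1/3$, and packaging the radius homotopy as a strong deformation retraction onto a section of the center map rather than as a homotopy $\Psi\circ\Phi\simeq\id$).
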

	\begin{proof}
		Let $\chi$ be the connected component of $\RUJ_n(X)$ with no nested curves. For each $f=\{[f_1],\dots,[f_n]\}\in \chi$, define $\Phi:\chi\to \UConf_T(*,X)$ to send $f$ to $\{c(D(f_1)),\dots,c(D(f_n))\}$, then $\Phi$ is well defined since no two circles are nested inside each other and therefore do not have the same center. For $x=\{x_1,\dots,x_n\}\in\UConf_n(*,X)$, let $r$ be the minimum of the pairwise distances and their injectivity radii, and define $\Psi:\UConf_n(*,X)\to \UJ_n(X)$ to send $x$ to the round Jordan configuration centered at these points with radius $r/3$. Then $\Phi\circ\Psi$ is the identity and the homotopy from $\Psi\circ\Phi$ to the identity on $\RUJ_T(X)$ is given by using the exponential map to geodesically shrink round curves that have a larger radius than what they started with and then geodesically growing the round curves that have a smaller radius than what they started with.
	\end{proof}
	
	In particular, there is a unique connected component of $\UJ_n(X)$ where no curves are nested inside of each other.

		\begin{theoremA}[Indexing connected components]
			The abstract trees of Jordan configurations induce a one-to-one correspondence between the collection of finite rooted trees with $n$ non-root vertices and the connected components of $\UJ_n(X)$ such that every tree is the abstract tree of each Jordan configuration in its associated connected component of $\UJ_n(X)$. 
			The analogous result holds for the labeled case.
		\end{theoremA}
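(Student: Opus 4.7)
The plan is to first observe that $f \mapsto T(f)$ is constant on path components by Lemma~\ref{lem:pathconnembsgivethesameabstracttree}, so it descends to a well-defined map from path components of $\UJ_n(X)$ to isomorphism classes of finite rooted trees with $n$ non-root vertices. It then remains to show this map is both surjective and injective.

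For surjectivity, given a finite rooted tree $T$ with $n$ non-root vertices, I realize $T$ as the abstract tree of some Jordan configuration by first applying Proposition~\ref{prop:components}, together with Theorem~\ref{thm:embeddingconfspaces} and Lemma~\ref{lem:abstracttreeandtreerelate}, to produce a round configuration $\kappa \in \RUJ_n(\RR^2)$ with $T(\kappa) = T$; I then scale $\kappa$ to fit inside an open ball of radius smaller than the injectivity radius at some chosen point $p \in X$ and push it forward by $\exp_p$. Since $\exp_p$ is a diffeomorphism onto its image on such a ball, disjointness and nesting structure are preserved, so the resulting Jordan configuration has abstract tree $T$.

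For injectivity, suppose $f, f' \in \UJ_n(X)$ satisfy $T(f) = T(f')$. By Theorem~\ref{thrm:roundingcurvesinsurfaces} Part 2, I may replace $f$ and $f'$ by round configurations in their path components and assume $f, f' \in \RUJ_n(X)$. Let $m$ be the number of children of the root of $T$ and label the $m$ outermost curves of $f$ as $C_1, \ldots, C_m$, with matched outermost curves $C_1', \ldots, C_m'$ of $f'$ induced by the tree isomorphism. I propose a three-stage path. Stage 1 uniformly contracts each outermost curve $C_i$ towards its center and scales the entire subconfiguration nested inside $C_i$ in the same manner, so that afterwards each subconfiguration rooted at $C_i$ is confined to an arbitrarily small ball around the center of $C_i$; the same is done for $f'$. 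Stage 2 transports the $m$ centers of the outermost curves of the shrunk $f$ along a path in $\UConf_m(*, X)$, which is path-connected since $X$ is a connected surface, to the $m$ centers of the outermost curves of the shrunk $f'$, extending this motion by isotopy extension to a compactly supported ambient isotopy of $X$ that drags the tiny subconfigurations along with their centers. Stage 3: the resulting configuration and $f'$ now share the centers of their outermost curves, and each matched pair of subconfigurations lies in a common small embedded disk in $X$, within which they have the same abstract subtree and hence, by Corollary~\ref{cor:he} and Proposition~\ref{prop:components} applied in a planar chart, lie in the same path component.

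The main obstacle is ensuring the motion in Stage 2 remains a path in $\UJ_n(X)$, i.e.\ that no two curves collide and every curve remains Jordan throughout. By choosing the shrinking parameter in Stage 1 small enough that every subconfiguration lies in a ball of radius at most one-third of both the injectivity radius along the path of centers and the minimum distance between distinct center trajectories, the ambient isotopy keeps distinct subconfigurations disjoint and contained in pairwise disjoint embedded coordinate disks, preserving nesting and avoiding any collision. The labeled statement for $\J_n(X)$ follows by carrying labels through every stage, since all constructions above can be performed equivariantly with respect to permutations of indices.
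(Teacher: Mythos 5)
Your reduction to the round case, your surjectivity argument (push a planar model configuration forward by $\exp_p$ into a small ball), and your use of Lemma~\ref{lem:pathconnembsgivethesameabstracttree} all match the paper. For injectivity, however, you take a genuinely different route. The paper argues bottom-up by induction on $n$: it locates a vertex $v$ all of whose children are leaves, deletes those leaves, connects the smaller configurations by the inductive hypothesis, drags the deleted curves along using the conformal parametrizations $\gamma_{f_{r_v,t}}\circ\gamma_{f_{r_v}}^{-1}$ of the moving disk $D(f_{r_v,t})$, and finishes inside $D(f'_{s_v})$ via Corollary~\ref{cor:he}. You instead work top-down: shrink each top-level subconfiguration into a tiny ball, move the $m$ centers through $\UConf_m(*,X)$, and dispose of all deeper nesting in one stroke by invoking the planar classification inside a small disk. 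Your version avoids the induction on the tree entirely (Proposition~\ref{prop:components} already handles arbitrary nesting in a chart), at the cost of importing the isotopy extension theorem, which the paper never needs because the $\gamma$-maps give a canonical way to carry nested curves along a path of their enclosing curve.

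There is one step that does not work as written: the hand-off from Stage 2 to Stage 3. A compactly supported ambient isotopy $H_t$ obtained from isotopy extension sends the center $c_i$ to $c_i'$, but it can distort the tiny ball $B_i$ arbitrarily, so $H_1(B_i)$ is merely some embedded open disk containing $c_i'$ --- there is no reason the transported subconfiguration and the shrunk subconfiguration of $f'$ ``lie in a common small embedded disk,'' and the union $H_1(B_i)\cup B_i'$ need not be a disk at all, so Corollary~\ref{cor:he} cannot be applied to it directly. Your ``one-third'' smallness condition controls nothing here, since it constrains the initial balls, not their images under $H_t$. Two repairs are available. Either replace the ambient isotopy by a rigid transport of each tiny subconfiguration along its center's trajectory via exponential maps (this is where your one-third condition actually earns its keep, guaranteeing disjointness and that each subconfiguration stays in a small metric ball around its moving center, as in the proof of Corollary~\ref{cor:hebetweenconfpointandnonnestedcomponents}); or keep the ambient isotopy but insert an intermediate move: apply Corollary~\ref{cor:he} inside the embedded disk $H_1(B_i)$ to push the transported subconfiguration into a small round ball $S_i\subset H_1(B_i)\cap B_i'$ around $c_i'$, apply it inside $B_i'$ to push the $f'$-subconfiguration into $S_i$ as well, and only then compare them inside $S_i$. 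With either repair your argument goes through.
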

		\begin{proof}
			By Lemma~\ref{lem:pathconnembsgivethesameabstracttree} each Jordan configuration in a connected component of $\UJ_n(X)$ has the same abstract tree. We need to prove that each tree is associated to a unique connected component of $\UJ_n(X)$.
			
			Let $T$ be an abstract tree with $n$ non-root vertices and $D$ an open ball in $X$. Then there exists a unique connected component $\chi_T$ of $\UJ_n(D)$ such that $T$ is the abstract tree for all Jordan configurations in $\chi_T$ by Proposition~\ref{prop:components} and Corollary~\ref{cor:he}. 
			We have $\chi_T$ is a subspace of a connected component of $\UJ_n(X)$. By Lemma~\ref{lem:pathconnembsgivethesameabstracttree} any Jordan configuration in the connected component of $\UJ_n(X)$ containing $\chi_T$ has abstract tree $T$.
			
			Now we will show for any abstract tree $T$ there is only one connected component of $\UJ_{n}(X)$ where all Jordan configurations in the connected component have abstract tree $T$; that is, two Jordan configurations with the same abstract tree are connected by a path. Suppose $f$ and $f'$ are elements of $\UJ_{n}(X)$ with abstract tree $T$. We will induct on $n$ to show that $f$ and $f'$ are connected by a path in $\UJ_n(X)$.
			The base case is when $n=0$, then $f$ and $f'$ are both the empty set and they have the abstract tree that does not contain any non-root vertices; we also have that $f$ and $f'$ are connected by a path trivially, by the path that is just the constant path. 
			
			Suppose for some $k$ and for all $n$ such that $0\leq n\leq k$, two Jordan configurations in $\UJ_n(X)$ with equal abstract trees are connected by a path. Let $f=\{[f_1],\dots,[f_{k+1}]\}$ and $f'=\{[f_1'],\dots,[f_{k+1}']\}$ be two Jordan configurations with the same abstract tree $T$. By Theorem~\ref{thrm:roundingcurvesinsurfaces} we can assume $f$ and $f'$ are round Jordan configurations. Since there are finitely many vertices of $T$, there exists a vertex, $v$, with children, $\{v_1,\dots,v_m\}$, such that no $v_i$ has a child of its own for $1\leq i\leq m$. Let $T(v)$ be the subtree of $T$ rooted at $v$ with children $\{v_1,\dots,v_m\}$. The vertex $v$ corresponds to two curves $[f_{r_v}]$ and $[f_{s_v}']$ in $f$ and $f'$, the children of $v$ correspond to the Jordan configurations $\{[f_{j}]\mid j< r_v\}$ and $\{[f_j']\mid j< s_v\}$ nested inside $[f_{r_v}]$ and $[f_{s_v}']$ respectively, and no $v_i$ having a child of its own for $1\leq i\leq m$ means that there are no nested curves in $\{[f_{j}]\mid j< r_v\}$ or $\{[f_j']\mid j< s_v\}$. Furthermore, removing $\{v_1,\dots,v_m\}$ from $T$ corresponds to $f\setminus\{[f_{j}]\mid j< r_v\}$ and $f'\setminus\{[f_j']\mid j< s_v\}$. 
			
			If $v$ is the root, by Corollary~\ref{cor:hebetweenconfpointandnonnestedcomponents} we have $f$ and $f'$ are elements of a space homotopy equivalent to the path connected space $\UConf_{k+1}(*,X)$, and hence are connected by a path. 
			If $v$ is not the root, then there exists a path 
			$$p_t:=\{[f_{1,t}],\dots,[f_{k+1-m,t}]\}$$
			from $f\setminus\{[f_{j}]\mid j< r_v\}$ to $f'\setminus\{[f_j']\mid j< s_v\}$ by the induction hypothesis. Let $p_t$ be indexed to agree with $f$, that is $[f_{j,0}]=[f_j]$. Then $p_t\cup\{[\gamma_{f_{r_v,t}}\circ\gamma_{f_{r_v}}^{-1}\circ f_j]\mid j<r_v\}$ is a path from $f$, to some Jordan configuration that agrees with $f'\setminus\{[f_j']\mid j< s_v\}$. 
			Since $\UJ_{T(v)}(D(f_{s_v}'))$ is homotopy equivalent to the path connected space $\UConf_{T(v)}(S^1,\RR^2)$ by Corollary~\ref{cor:he}, there is a path from $\{[\gamma_{f_{r_v,1}}\circ\gamma_{f_{r_v}}^{-1}\circ f_j]\mid j<r_v\}$ to $\{[f_j']\mid j< s_v\}$.
			So there is a path from $f$ to $f'$ that is $p_t$ on $f\setminus\{[f_{j}]\mid j< r_v\}$ to $f'\setminus\{[f_j']\mid j< s_v\}$ and some path from $\{[f_{j}]\mid j< r_v\}$ to $\{[f_j']\mid j< s_v\}$ which is contained in $D(f_{r_v,t})$ for each $t$, and hence is a path in $\UJ_n(X)$.
		\end{proof}
		
	\begin{definition}\label{dfn:tj}
		For a tree $T$, define the \emph{space of $T$-Jordan configurations}, $\UJ_T(X)$, to be the connected component of $\UJ_n(X)$ where each Jordan configuration has abstract tree $T$. For a labeled tree $T$, define the \emph{space of labeled $T$-Jordan configurations}, $\J_T(\RR^2)$, to be the connected component of $\UJ_n(\RR^2)$ to be the connected component of $\UJ_n(X)$ where each Jordan configuration has abstract labeled tree $T$.
	\end{definition}
	
	
		\section{Homotopy groups}\label{sec:homotopygroups}
		
		In this section we prove that these spaces of Jordan configurations in surfaces are $K(\pi,1)$s, that is, homotopy equivalent to a CW-complex and aspherical; as well as prove that their fundamental group decomposes as a semi-direct product of a surface braid group and a product of braided automorphism groups of trees.
		
		For a vertex $v_i$ in a tree $T$ write $T(v_i)$ for the maximal subtree of $T$ rooted at $v_i$. If $T$ is planar, then $T(v_i)$ inherits the planar ordering. Write $\Aut(T)$ for the automorphism group of a planar tree $T$.
		
		\begin{proposition}[Curry, Gelnett, Zaremsky, \cite{curry2024configurationspacescirclesplane}]\label{prop:decomp_Aut(T)}
			Let $T$ be a planar tree. With $\{v_1,\dots,v_m\}$ the set of children of the root of $T$, let $\Pi$ be the partition of $\{1,\dots,m\}$ such that $i$ and $j$ share a block of $\Pi$ if and only if $T(v_i)\cong T(v_j)$. The automorphism group $\Aut(T)$ decomposes as a semidirect product
			\[
			\Aut(T) \cong (\Aut(T(v_1))\times\cdots\times\Aut(T(v_m)))\rtimes \Sigma_m^\Pi \text{,}
			\]
			where $\Sigma_m^\Pi$ is the subgroup of the symmetric group of $m$ elements consisting of all permutations that setwise stabilize each element of $\Pi$. The action of $\Sigma_m^\Pi$ on the direct product is by permuting entries.
		\end{proposition}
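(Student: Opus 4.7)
The plan is to construct the semidirect product decomposition directly from the structure of $T$, by exhibiting the kernel and a splitting of the natural projection onto $\Sigma_m^\Pi$.

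First I would observe that any $\varphi\in\Aut(T)$ must fix the root $\varnothing$, since it is the unique vertex of $T$ with zero incoming edges. Consequently $\varphi$ permutes the set $\{v_1,\dots,v_m\}$ of children of the root via some $\sigma_\varphi\in\Sigma_m$, and since $\varphi$ restricts to an isomorphism $T(v_i)\xrightarrow{\cong}T(v_{\sigma_\varphi(i)})$, the permutation $\sigma_\varphi$ must preserve the blocks of $\Pi$; that is, $\sigma_\varphi\in\Sigma_m^\Pi$. This defines a homomorphism $q\colon\Aut(T)\to\Sigma_m^\Pi$ sending $\varphi\mapsto\sigma_\varphi$, which is surjective because for any $\sigma\in\Sigma_m^\Pi$ one can choose isomorphisms between the pairwise isomorphic subtrees to realize $\sigma$.

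Next I would identify the kernel of $q$ with $\Aut(T(v_1))\times\cdots\times\Aut(T(v_m))$. An element of $\ker(q)$ fixes each $v_i$ and therefore restricts to an automorphism of each $T(v_i)$, and conversely any tuple of automorphisms of the $T(v_i)$'s assembles (fixing the root) into an automorphism of $T$ lying in $\ker(q)$. This gives the normal subgroup.

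For the splitting, for each block $B\in\Pi$ I would fix a representative index $i_B\in B$ and, for each $i\in B$, a chosen planar-tree isomorphism $\iota_i\colon T(v_{i_B})\xrightarrow{\cong}T(v_i)$ with $\iota_{i_B}=\id$. Define $s\colon\Sigma_m^\Pi\to\Aut(T)$ by declaring $s(\sigma)$ to fix the root and to send $T(v_i)$ to $T(v_{\sigma(i)})$ via $\iota_{\sigma(i)}\circ\iota_i^{-1}$ (which makes sense because $\sigma$ preserves blocks, so $i$ and $\sigma(i)$ share a representative). A short computation shows this is a homomorphism: composing the actions of $\sigma_1$ then $\sigma_2$ on $T(v_i)$ yields $\iota_{\sigma_2\sigma_1(i)}\circ\iota_{\sigma_1(i)}^{-1}\circ\iota_{\sigma_1(i)}\circ\iota_i^{-1}=\iota_{\sigma_2\sigma_1(i)}\circ\iota_i^{-1}$, agreeing with $s(\sigma_2\sigma_1)$. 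Since $q\circ s=\id$, this splits the short exact sequence.

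Finally I would verify the action of $\Sigma_m^\Pi$ on the kernel is by permutation of entries: conjugation by $s(\sigma)$ sends an automorphism supported in the $i$-th factor to one supported in the $\sigma(i)$-th factor, via the chosen identification of $T(v_i)$ with $T(v_{\sigma(i)})$. Assembling these pieces gives the claimed semidirect product decomposition. The main obstacle I anticipate is purely bookkeeping: keeping the chosen isomorphisms $\iota_i$ consistent across blocks so that $s$ is genuinely a homomorphism and so that the induced action is literally the permutation action on the direct product factors; once the representatives $i_B$ and the $\iota_i$ are fixed, each step is essentially immediate.
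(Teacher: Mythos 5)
Your argument is correct and complete: the root is characterized as the unique vertex with no incoming edge, so every automorphism induces a block-preserving permutation of $\{v_1,\dots,v_m\}$; the kernel of the resulting surjection onto $\Sigma_m^\Pi$ is visibly $\Aut(T(v_1))\times\cdots\times\Aut(T(v_m))$; and your splitting via fixed isomorphisms $\iota_i$ from a block representative is a genuine homomorphism, with conjugation permuting the factors under those identifications. Note that the paper itself offers no proof of this proposition -- it is imported verbatim from \cite{curry2024configurationspacescirclesplane} -- so there is nothing internal to compare against; your proof is the standard argument and fully establishes the statement.
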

		
		Recall for a connected topological 2-manifold $X$ different from $S^2$ and $\RR P^2$, the configuration space of points on $X$ is aspherical~\cite{fadell1962configuration} with a surface braid group on $m$ strands defined to be $B_m(X):=\pi_1(\UConf_m(*,X))$. When $X=\RR^2$ we recover the usual braid groups.
	 	There is a natural projection, $\pi:B_m(X)\to\Sigma_m$ given by forgetting over/under crossings and recording where strands start and end. Since $\Sigma_m^{\Pi}$ defined in Proposition~\ref{prop:decomp_Aut(T)} is a subgroup of $\Sigma_m$, we may define $B_m^{\Pi}(X)$ to be the preimage of $\Sigma_m^{\Pi}$ through $\pi$. Then $B_m^{\Pi}(X)$ is the subgroup of $B_m(X)$ where strands start and end in the same block of $\Pi$, although strands may cross with other strands that start and end at other blocks of $\Pi$. Finally, recall that the pure surface braid group on $m$ strands, $PB_m(X)$ is the kernel of $\pi$, which is the subgroup of $B_m(X)$ where each strand starts and ends at the same point.
		
		\begin{definition}[Braided $\Aut(T)$]\label{def:BPBAut}
			Let $T$ be a planar tree. With $\{v_1,\dots,v_m\}$ and $\Pi$ as in Proposition~\ref{prop:decomp_Aut(T)}, we define the \emph{braided automorphism group of $T$} to be
			\[
			\BAut(T) \coloneq (\BAut(T(v_1))\times\cdots\times \BAut(T(v_m))) \rtimes B_m^\Pi(\RR^2) \text{,}
			\]
			where the action of $B_m^{\Pi}(\RR^2)$ on the direct product is given by mapping to $\Sigma_m^\Pi$ and then permuting the entries.
			
			The \emph{pure braided automorphism group} $\PBAut(T)$ is the kernel of the map $\pi\colon \BAut(T)\to \Aut(T)$. Since $PB_m(\RR^2)$ acts trivially when permuting entries, $\PBAut(T)$ decomposes as a direct product
			\[
			\PBAut(T) = (\PBAut(T(v_1))\times\cdots\times \PBAut(T(v_m))) \times PB_m(\RR^2) \text{,}
			\]
			and so $\PBAut(T)$ decomposes as
			\[
			\PBAut(T) \cong PB_{m_1}(\RR^2)\times\cdots\times PB_{m_n}(\RR^2)\times PB_m(\RR^2) \text{,}
			\]
			where $m_i$ is the number of children of the vertex labeled $i$, under some arbitrarily chosen labeling of $T$.
		\end{definition}
		
		Note when $T$ is a labeled planar tree $\Aut(T)$ is trivial since no two vertices are labeled the same; for the same reason we have $\Pi$, as defined in Proposition~\ref{prop:decomp_Aut(T)} and Definition~\ref{def:BPBAut}, is the discrete partition. We may construct $\BAut(T)$ and $\PBAut(T)$ for $T$ a labeled planar tree in the same way as an unlabeled planar tree. For $T$ a labeled planar tree, let $U(T)$ be the underlying unlabeled planar tree.
		
		\begin{lemma}\label{lem:labeledBAut}
			For $T$ a labeled planar tree, we have $\BAut(T)\cong\PBAut(T)\cong\PBAut(U(T))$.
		\end{lemma}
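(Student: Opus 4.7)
The plan is to unfold the recursive definition of $\BAut$ and show that the labeling forces the partition $\Pi$ appearing in the semidirect product to be discrete at every level, collapsing the semidirect product into a direct product of pure braid groups.

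First I would observe that when $T$ is labeled, any two distinct non-root vertices $v_i$ and $v_j$ carry distinct labels, so the subtrees $T(v_i)$ and $T(v_j)$ cannot be isomorphic as labeled planar trees (an isomorphism of labeled trees preserves labels by our conventions in Section~3). Consequently, the partition $\Pi$ of $\{1,\dots,m\}$ from Proposition~\ref{prop:decomp_Aut(T)} and Definition~\ref{def:BPBAut} is the discrete partition, so $\Sigma_m^{\Pi}$ is trivial and hence $B_m^{\Pi}(\RR^2) = PB_m(\RR^2)$. Because the action of $B_m^{\Pi}(\RR^2)$ on the direct product of the $\BAut(T(v_i))$ factors through $\Sigma_m^{\Pi}$, the semidirect product collapses to a direct product.

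Next I would induct on the depth of $T$ (or equivalently the number of non-root vertices), with base case the trivial tree where $\BAut$ and $\PBAut$ are both trivial. For the inductive step, each $T(v_i)$ is itself a labeled planar tree, so by the induction hypothesis $\BAut(T(v_i)) \cong \PBAut(T(v_i))$. Combined with the previous paragraph we obtain
\[
\BAut(T) \cong \BAut(T(v_1)) \times\cdots\times \BAut(T(v_m)) \times PB_m(\RR^2),
\]
which matches the explicit product formula for $\PBAut(T)$ in Definition~\ref{def:BPBAut}, giving $\BAut(T) \cong \PBAut(T)$. Alternatively, since a labeled tree admits only the identity automorphism, $\Aut(T)$ is trivial, and so $\PBAut(T) = \ker(\BAut(T)\to\Aut(T)) = \BAut(T)$ with no induction needed.

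For the second isomorphism, I would note that the final displayed decomposition of $\PBAut$ in Definition~\ref{def:BPBAut} depends only on the integers $m_i$ (the number of children of the $i$-th non-root vertex) and $m$ (the number of children of the root), and that these numbers are determined entirely by the underlying unlabeled planar tree $U(T)$. Matching the pure braid factors vertex by vertex yields $\PBAut(T) \cong \PBAut(U(T))$. There is no real obstacle here beyond bookkeeping; the only subtle point is making explicit that an isomorphism of labeled planar trees must preserve labels, which is what forces $\Pi$ to be discrete.
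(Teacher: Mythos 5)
Your proposal is correct and follows essentially the same route as the paper's proof: observe that distinct labels force $\Pi$ to be the discrete partition so that $B_m^{\Pi}(\RR^2)=PB_m(\RR^2)$ and the semidirect product collapses, then induct to unfold $\BAut(T)$ into the product $PB_{m_1}(\RR^2)\times\cdots\times PB_{m_n}(\RR^2)\times PB_m(\RR^2)$, which coincides with the explicit decomposition of both $\PBAut(T)$ and $\PBAut(U(T))$. Your extra observation that $\Aut(T)$ trivial gives $\PBAut(T)=\BAut(T)$ directly is a nice shortcut for the first isomorphism, but it is not a departure from the paper's argument.
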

		\begin{proof}
			Let $\{v_1,\dots,v_m\}$ be the set of children of the root of $T$. $\PBAut(U(T))$ and $\PBAut(T)$ decompose as $PB_{m_1}(\RR^2)\times\cdots\times PB_{m_n}(\RR^2)\times PB_m(\RR^2)$ where $m_i$ is the number of children of the vertex labeled $i$. Since $\Pi$ is the discrete partition of $\{1,\dots,m\}$, the group $B^{\Pi}_m(\RR^2)$ is the subgroup of $B_m(\RR^2)$ where each strand starts and ends at the same position, i.e. $PB_m(\RR^2)$. By induction $\BAut(T)$ decomposes as $PB_{m_1}(\RR^2)\times\cdots\times PB_{m_n}(\RR^2)\times PB_m(\RR^2)$.
		\end{proof}
		
		The primary result of \cite{curry2024configurationspacescirclesplane} is that $\PBAut(T)$ and $\BAut(T)$ are in fact the fundamental groups of labeled and unlabeled configuration spaces of circles in the plane, respectively.
		
		\begin{theorem}[\cite{curry2024configurationspacescirclesplane}]\label{thrm:confcircunlabclassifying}
			Let $T$ be a planar tree with $n$ non-root vertices. Then $\UConf_T(S^1,\RR^2)$ is aspherical, and its fundamental group is isomorphic to $\BAut(T)$, hence it is a $K(\BAut(T),1)$.
		\end{theorem}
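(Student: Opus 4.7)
The plan is to induct on the number of non-root vertices of $T$. The base case $n = 0$ is immediate since $\UConf_T(S^1, \RR^2)$ is a point. For the inductive step, let $\{v_1, \ldots, v_m\}$ be the children of the root of $T$ and let $\Pi$ be the partition of $\{1,\dots,m\}$ from Proposition~\ref{prop:decomp_Aut(T)}.

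First I would construct a ``forget the nested circles'' map
\[
p\colon \UConf_T(S^1, \RR^2) \longrightarrow \Conf_m(*, \RR^2)/\Sigma_m^{\Pi}
\]
sending a configuration to the $\Sigma_m^\Pi$-orbit of the centers of its un-nested top-level circles; the $\Pi$-refined quotient remembers which top-level circles enclose isomorphic subtree configurations. The target sits as an intermediate cover between $\Conf_m(*,\RR^2)$ and $\UConf_m(*,\RR^2)$, so it is aspherical with fundamental group $B_m^{\Pi}(\RR^2)$.

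Next I would argue that $p$ is a fiber bundle with fiber
\[
F \;:=\; \prod_{i=1}^m \UConf_{T(v_i)}\bigl(S^1, \interior(B^2)\bigr),
\]
which by Corollary~\ref{cor:he} is homotopy equivalent to $\prod_i \UConf_{T(v_i)}(S^1, \RR^2)$. Local triviality should come from the Carath\'eodory-type parametrizations of Lemma~\ref{lem:conformalmapping} together with the equivariant center of Corollary~\ref{cor:existcenter}: over a neighborhood of a basepoint $\{c_1,\dots,c_m\}$ one chooses small round disks about each $c_i$ of uniform radius and canonical identifications with $B^2$, yielding a local product. By the induction hypothesis each factor of $F$ is a $K(\BAut(T(v_i)),1)$, so $F$ is aspherical with fundamental group $\prod_i \BAut(T(v_i))$.

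The long exact sequence of the fibration then gives asphericity of $\UConf_T(S^1, \RR^2)$ together with a short exact sequence
\[
1 \longrightarrow \prod_{i=1}^m \BAut(T(v_i)) \longrightarrow \pi_1\bigl(\UConf_T(S^1, \RR^2)\bigr) \longrightarrow B_m^{\Pi}(\RR^2) \longrightarrow 1,
\]
and a splitting comes from a section of $p$ that places a standardized round $T(v_i)$-configuration inside each top-level disk. The main obstacle is identifying the monodromy action with the one defining $\BAut(T)$. On $\pi_0(F)$ the action is visibly the permutation of matching factors given by $\pi\colon B_m^{\Pi}(\RR^2)\to \Sigma_m^\Pi$. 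On $\pi_1(F)$ one must check that pure braids act trivially: a lift of $\beta\in PB_m(\RR^2)$ to an ambient isotopy of $\RR^2$ carrying each top-level disk back to itself can be arranged so that its action on each disk is by a homeomorphism of $B^2$ fixing the boundary pointwise, and the space of such homeomorphisms is contractible by Alexander's trick. Hence the induced self-map of $F$ is isotopic to the identity, confirming the semidirect product structure $\BAut(T)$.
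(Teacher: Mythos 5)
Your argument is essentially the paper's: the cited proof (sketched in Section~5 around Figure~\ref{fig:fiber_bundle_braid}) is exactly a ``forget the nested circles'' fiber bundle, a long exact sequence, induction on the depth of $T$, a section giving the splitting, and the identification of the admissible permutations with $\Sigma_m^\Pi$. The one genuine difference is your choice of base: the paper fibers over $\UConf_{\Lambda}(S^1,\RR^2)$, so its fiber may be disconnected and the subgroup $B_m^\Pi(\RR^2)\leq B_m(\RR^2)$ is extracted as the image of $\pi_1(\UConf_T)\to\pi_1(\UConf_\Lambda)$ using the $\pi_0(F)$ term of the sequence, whereas you fiber over the intermediate cover $\Conf_m(*,\RR^2)/\Sigma_m^\Pi$, which makes the fiber connected and produces $B_m^\Pi(\RR^2)$ directly as the fundamental group of the base --- a clean repackaging of the same idea. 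One imprecision to fix: since your $p$ records only the \emph{centers} of the top-level circles, its point-preimage also parametrizes the (variable) top-level circles themselves, so the literal fiber is not $\prod_i\UConf_{T(v_i)}(S^1,\interior(B^2))$ but only deformation retracts onto it (or one should fiber over circles rather than centers, as the paper does); this is harmless but should be stated, since the long exact sequence needs an actual fibration with an identified fiber.
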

		\begin{theorem}[\cite{curry2024configurationspacescirclesplane}]\label{thrm:confcirclabclassifying}
			Let $T$ be a labeled planar tree with $n$ non-root vertices. Then $\Conf_T(S^1,\RR^2)$ is aspherical, and its fundamental group is isomorphic to $\PBAut(T)$. By Lemma~\ref{lem:labeledBAut} we have $\Conf_T(S^1,\RR^2)$ is a $K(\BAut(T),1)$.
		\end{theorem}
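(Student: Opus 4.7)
The plan is to derive Theorem~\ref{thrm:confcirclabclassifying} from its unlabeled counterpart Theorem~\ref{thrm:confcircunlabclassifying} via a covering space argument. Let $U(T)$ denote the underlying unlabeled planar tree of $T$. The natural quotient $\Conf_n(S^1,\RR^2)\to\UConf_n(S^1,\RR^2)$ is a regular cover with deck group $\Sigma_n$. Restricting to the preimage of the connected component $\UConf_{U(T)}(S^1,\RR^2)$ yields the disjoint union of all components $\Conf_{T'}(S^1,\RR^2)$ with $U(T')=U(T)$, and the $\Sigma_n$-action on this preimage is induced by relabeling. A short check with Definition~\ref{dfn:abstracttree} shows that the stabilizer of $\Conf_T(S^1,\RR^2)$ in $\Sigma_n$ is exactly $\Aut(U(T))$, viewed as the subgroup of $\Sigma_n$ coming from tree automorphisms permuting the $n$ non-root vertices. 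Therefore the restricted map $\Conf_T(S^1,\RR^2)\to\UConf_{U(T)}(S^1,\RR^2)$ is a regular covering with deck group $\Aut(U(T))$.

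From this covering I would extract the fundamental group via the short exact sequence
\[
1\to\pi_1(\Conf_T(S^1,\RR^2))\to\pi_1(\UConf_{U(T)}(S^1,\RR^2))\to\Aut(U(T))\to 1.
\]
By Theorem~\ref{thrm:confcircunlabclassifying} the middle term is $\BAut(U(T))$, and the quotient map in the sequence coincides with the algebraic projection $\BAut(U(T))\to\Aut(U(T))$ coming from the semidirect-product decomposition in Definition~\ref{def:BPBAut} and Proposition~\ref{prop:decomp_Aut(T)} (that is, the map sending $B_m^{\Pi}(\RR^2)$ to $\Sigma_m^{\Pi}$ at each level). The kernel of this projection is by definition $\PBAut(U(T))$, which is identified with $\PBAut(T)\cong\BAut(T)$ by Lemma~\ref{lem:labeledBAut}. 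Asphericity is immediate: the total space of a covering of an aspherical space is aspherical, since covering maps induce isomorphisms on $\pi_k$ for $k\geq 2$. Thus $\Conf_T(S^1,\RR^2)$ is a $K(\BAut(T),1)$.

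The main obstacle is verifying that the topological monodromy of the cover agrees with the algebraic projection $\BAut(U(T))\to\Aut(U(T))$. I would handle this inductively on the depth of $T$, leveraging the fibration structure behind Theorem~\ref{thrm:confcircunlabclassifying}: at the outermost level a loop in $\UConf_{U(T)}$ traces a braid in $B_m^{\Pi}(\RR^2)$ whose underlying permutation in $\Sigma_m^{\Pi}$ is precisely the induced monodromy on the labels of the children of the root, while the inductive hypothesis deals with the subtrees rooted at each child. The base case is the standard identification of the monodromy of $\Conf_m(\ast,\RR^2)\to\UConf_m(\ast,\RR^2)$ with the canonical surjection $B_m(\RR^2)\to\Sigma_m$.
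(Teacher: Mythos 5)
Your proposal matches the paper's own argument: the paper derives Theorem~\ref{thrm:confcirclabclassifying} from Theorem~\ref{thrm:confcircunlabclassifying} precisely by observing that $\Conf_T(S^1,\RR^2)\to\UConf_{U(T)}(S^1,\RR^2)$ is a regular covering with deck group $\Aut(U(T))$, so that asphericity is inherited and $\pi_1$ is the kernel of $\BAut(U(T))\to\Aut(U(T))$, namely $\PBAut(T)$ (this is also exactly how the paper proves the analogous labeled statement for Jordan configurations in Theorem~\ref{thrm:jorconflabclassifying}). Your extra care in checking that the topological monodromy agrees with the algebraic projection is a reasonable refinement of the same route, not a different one.
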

		
		These are proven by the use of a long exact sequence of homotopy groups pertaining to the following fiber bundle: $$F\to\UConf_T(S^1,\RR^2)\overset{\pi}{\to}\UConf_{\Lambda}(S^1,\RR^2)$$ where \begin{enumerate}
			\item $T$ is a tree,
			\item $\Lambda$ is the subtree of $T$ containing only the root and the children of the root,
			\item $\pi$ deletes the nested circles of a configuration of circles in the plane, and
			\item $F$ is the subspace of $\UConf_T(S^1,\RR^2)$ where the collection of un-nested circles is the fixed configuration corresponding to $\Lambda$.
		\end{enumerate}
		
		The map on trees induced by $\pi$ is given by deleting vertices that are not the root or children of the root, so $T$ is sent to $\Lambda$. The base case is when $T$ is already a tree with all non-root vertices children of the root, then $\UConf_T(S^1,\RR^2)$ consists of configurations of un-nested circles and Proposition~\ref{prop:hetoconfofpoints} tells us the homotopy groups are known. The base space $\UConf_{\Lambda}(S^1,\RR^2)$ contributes $B^{\Pi}_m(\RR^2)\leq B_m(\RR^2)$ to $\BAut(T)$, where circles start and end at circles with the same nesting order which is observed by $\Pi$. The fiber $F$ contributes to the recursive decomposition of $\BAut(T)$. Figure~\ref{fig:fiber_bundle_braid} is a colorful illustration of this fiber bundle where the un-nested circles are black, nested circles are red and blue, and the fiber bundle decomposes a braid in $\UConf_T(S^1,\RR^2)$ into a braid of un-nested circles and separately a braid of nested circles. 
		Theorem~\ref{thrm:confcirclabclassifying} follows from Theorem~\ref{thrm:confcircunlabclassifying} and the fact that $\Conf_T(S^1,\RR^2)\to\UConf_T(S^1,\RR^2)$ is a regular covering with deck group $\Aut(T)$.
		
		\begin{figure}[hb]
			\centering
			\begin{tikzpicture}[line width=1pt,yscale=.6]
				
				\draw[white,line width=7pt] (-0.27,0) to[out=-90, in=90] (0.77,-7);
				\draw[blue] (-0.27,0) to[out=-90, in=90] (0.77,-7);
				\draw[white,line width=7pt] (0.27,0) to[out=-90, in=90] (2.2,-7);
				\draw[blue] (0.27,0) to[out=-90, in=90] (2.2,-7);
				
				\draw[white,line width=3pt] (0.77,0) to[out=-90, in=90] (-0.27,-7);
				\draw[blue] (0.77,0) to[out=-90, in=90] (-0.27,-7);
				\draw[white,line width=3pt] (2.2,0) to[out=-90, in=90] (0.27,-7);
				\draw[blue] (2.2,0) to[out=-90, in=90] (0.27,-7);
				
				\draw[white,line width=5pt] (-0.95,0) -- (-0.95,-7)   (2.95,0) -- (2.95,-7);
				\draw (-0.95,0) -- (-0.95,-7)   (2.95,0) -- (2.95,-7);
				
				\draw[blue] (0,0) ellipse (7.7pt and 3.5pt);
				\draw[blue] (1.485,0) ellipse (20.3pt and 4pt);
				\draw (1,0) ellipse (55.5pt and 7.5pt);
				
				\draw[blue] (0,-7) ellipse (7.7pt and 4pt);
				\draw[blue] (1.488,-7) ellipse (20.4pt and 4pt);
				\draw (.999,-7) ellipse (55.5pt and 8pt);
				
				\draw[white,line width=3pt] (7.27,0) to[out=-90, in=90] (6.23,-7);
				\draw[red] (7.27,0) to[out=-90, in=90] (6.23,-7);
				\draw[white,line width=3pt] (8.7,0) to[out=-90, in=90] (6.77,-7);
				\draw[red] (8.7,0) to[out=-90, in=90] (6.77,-7);
				
				\draw[white,line width=3pt] (6.23,0) to[out=-90, in=90] (7.27,-7);
				\draw[red] (6.23,0) to[out=-90, in=90] (7.27,-7);
				\draw[white,line width=3pt] (6.77,0) to[out=-90, in=90] (8.7,-7);
				\draw[red] (6.77,0) to[out=-90, in=90] (8.7,-7);
				
				\draw[white,line width=5pt] (5.55,0) -- (5.55,-7)   (9.45,0) -- (9.45,-7);
				\draw (5.55,0) -- (5.55,-7)   (9.45,0) -- (9.45,-7);
				
				\draw[red] (6.5,0) ellipse (7.65pt and 4pt);
				\draw[red] (7.985,0) ellipse (20.38pt and 4pt);
				\draw (7.5,0) ellipse (55.5pt and 8pt);
				
				\draw[red] (6.5,-7) ellipse (7.65pt and 4pt);
				\draw[red] (7.985,-7) ellipse (20.38pt and 4pt);
				\draw (7.5,-7) ellipse (55.49pt and 8pt);
				
				\begin{scope}[yscale=10/6,yshift=-.1\textheight,xshift=.7\linewidth,scale=1.2]
					\draw[blue] (0,0) -- (0.5,-0.5) -- (1,0);
					\draw[red] (1.5,0) -- (2,-0.5) -- (2.5,0);
					
					\draw[blue,->] (0,.5) to [bend left] (1,.5);
					\draw[blue, bend at end,<-] (0,.3) to [bend right] (1,.3);
					
					\draw[red,<-] (1.5,.5) to [bend left] (2.5,.5);
					\draw[red, bend at end,->] (1.5,.3) to [bend right] (2.5,.3);
					
					\filldraw[blue] (0,0) circle (1.5pt);
					\filldraw (0.5,-0.5) circle (2.5pt);
					\filldraw[blue] (1,0) circle (1.5pt);
					\filldraw[red] (1.5,0) circle (1.5pt);
					\filldraw (2,-0.5) circle (2.5pt);
					\filldraw[red] (2.5,0) circle (1.5pt);
				\end{scope}
				
			\end{tikzpicture}\hspace{1.5cm}
			
			\
			
			\begin{tikzpicture}[line width=1pt,yscale=.9]
				
				\draw[white,line width=7pt] (-0.27,0) to[out=-60, in=120] (7.27,-7);
				\draw[blue] (-0.27,0) to[out=-60, in=120] (7.27,-7);
				\draw[white,line width=7pt] (0.27,0) to[out=-60, in=120] (8.7,-7);
				\draw[blue] (0.27,0) to[out=-60, in=120] (8.7,-7);
				
				\draw[white,line width=3pt] (0.77,0) to[out=-60, in=120] (6.23,-7);
				\draw[blue] (0.77,0) to[out=-60, in=120] (6.23,-7);
				\draw[white,line width=3pt] (2.2,0) to[out=-60, in=120] (6.77,-7);
				\draw[blue] (2.2,0) to[out=-60, in=120] (6.77,-7);
				
				\draw[white,line width=5pt] (-0.95,0) to[out=-60, in=120] (5.55,-7)   (2.95,0) to[out=-60, in=120] (9.45,-7);
				\draw (-0.95,0) to[out=-60, in=120] (5.55,-7)   (2.95,0) to[out=-60, in=120] (9.45,-7);
				
				\draw[blue] (0,0) ellipse (7.7pt and 3.5pt);
				\draw[blue] (1.485,0) ellipse (20.3pt and 4pt);
				\draw (1,0) ellipse (55.5pt and 7.5pt);
				
				\draw[white,line width=3pt] (7.27,0) to[out=-120, in=60] (-.27,-7);
				\draw[red] (7.27,0) to[out=-120, in=60] (-.27,-7);
				\draw[white,line width=3pt] (8.7,0) to[out=-120, in=60] (.27,-7);
				\draw[red] (8.7,0) to[out=-120, in=60] (.27,-7);
				
				\draw[white,line width=3pt] (6.23,0) to[out=-120, in=60] (.77,-7);
				\draw[red] (6.23,0) to[out=-120, in=60] (.77,-7);
				\draw[white,line width=3pt] (6.77,0) to[out=-120, in=60] (2.2,-7);
				\draw[red] (6.77,0) to[out=-120, in=60] (2.2,-7);
				
				\draw[white,line width=5pt] (5.55,0) to[out=-120, in=60] (-.95,-7)   (9.45,0) to[out=-120, in=60] (2.95,-7);
				\draw (5.55,0) to[out=-120, in=60] (-.95,-7)   (9.45,0) to[out=-120, in=60] (2.95,-7);
				
				\draw[red] (6.5,0) ellipse (7.65pt and 4pt);
				\draw[red] (7.985,0) ellipse (20.38pt and 4pt);
				\draw (7.5,0) ellipse (55.5pt and 8pt);
				
				\draw[blue] (6.5,-7) ellipse (7.65pt and 4pt);
				\draw[blue] (7.985,-7) ellipse (20.38pt and 4pt);
				\draw (7.5,-7) ellipse (55.49pt and 8pt);
				
				\draw[red] (0,-7) ellipse (7.7pt and 4pt);
				\draw[red] (1.488,-7) ellipse (20.4pt and 4pt);
				\draw (.999,-7) ellipse (55.5pt and 8pt);
				
				\begin{scope}[yscale=10/9,yshift=-.1\textheight,xshift=.7\linewidth,scale=1.2]
					\draw[blue] (0,0) -- (0.5,-0.5) -- (1,0);
					\draw[red] (1.5,0) -- (2,-0.5) -- (2.5,0);
					\draw (0.5,-0.5) -- (1.25,-1) -- (2,-.5);
					
					\draw[->] (0.5,-1.5) to [bend left] (2,-1.5);
					\draw[bend at end,<-] (0.5,-1.7) to [bend right] (2,-1.7);
					
					\draw[blue,->] (0,.5) to [bend left] (1,.5);
					\draw[blue, bend at end,<-] (0,.3) to [bend right] (1,.3);
					
					\draw[red,<-] (1.5,.5) to [bend left] (2.5,.5);
					\draw[red, bend at end,->] (1.5,.3) to [bend right] (2.5,.3);
					
					\filldraw[blue] (0,0) circle (1.5pt);
					\filldraw (0.5,-0.5) circle (2.5pt);
					\filldraw[blue] (1,0) circle (1.5pt);
					\filldraw[red] (1.5,0) circle (1.5pt);
					\filldraw (2,-0.5) circle (2.5pt);
					\filldraw[red] (2.5,0) circle (1.5pt);
					\filldraw (1.25,-1) circle (2.5pt);
				\end{scope}
				
			\end{tikzpicture}
			
			\
			
			\begin{tikzpicture}[line width=1pt,yscale=.6]
				
				\draw[white,line width=5pt] (-0.95,0) to[out=-60, in=120] (5.55,-7)   (2.95,0) to[out=-60, in=120] (9.45,-7);
				\draw (-0.95,0) to[out=-60, in=120] (5.55,-7)   (2.95,0) to[out=-60, in=120] (9.45,-7);
				
				\draw (1,0) ellipse (55.5pt and 7.5pt);

				\draw[white,line width=5pt] (5.55,0) to[out=-120, in=60] (-.95,-7)   (9.45,0) to[out=-120, in=60] (2.95,-7);
				\draw (5.55,0) to[out=-120, in=60] (-.95,-7)   (9.45,0) to[out=-120, in=60] (2.95,-7);
				
				\draw (7.5,0) ellipse (55.5pt and 8pt);
				
				\draw (7.5,-7) ellipse (55.49pt and 8pt);
				
				\draw (.999,-7) ellipse (55.5pt and 8pt);
				
				\begin{scope}[yscale=10/6,yshift=-.1\textheight,xshift=.725\linewidth,scale=1.2]
					
					\draw (0.5,-0.5) -- (1.25,-1) -- (2,-.5);

					\draw[->] (0.5,.1) to [bend left] (2,.1);
					\draw[bend at end,<-] (0.5,-.1) to [bend right] (2,-.1);
					
					\filldraw (0.5,-0.5) circle (2.5pt);
					\filldraw (2,-0.5) circle (2.5pt);
					\filldraw (1.25,-1) circle (2.5pt);
				\end{scope}
				
			\end{tikzpicture}
			\caption{A fiber bundle decomposition of a braid in $\UConf_4(S^1,\RR^2)$, represented in a braid-like form. On the right are the associated automorphisms of the planar abstract trees of the Jordan configurations.} 
			\label{fig:fiber_bundle_braid}
		\end{figure}
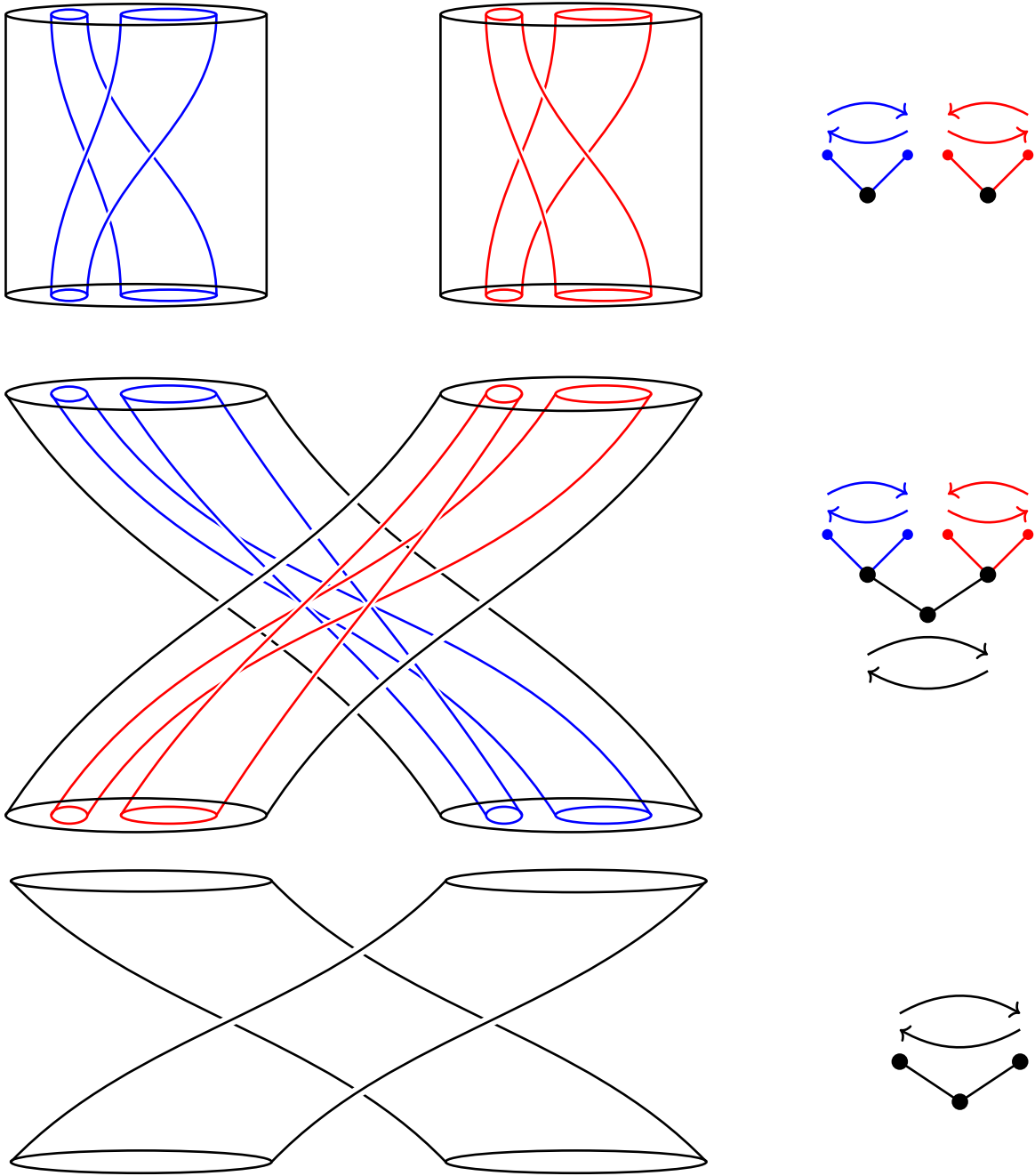
		
	Given a labeled or unlabeled tree $T$, when we write $\PBAut(T)$ and $\BAut(T)$ we assume an arbitrary planar ordering of the children of vertices.
	
	\begin{corollary}\label{cor:class}
		For a tree $T$, the space $\UJ_T(\RR^2)$ is a $K(\BAut(T),1)$. Similarly, for a labeled tree $T$, the space $\J_T(\RR^2)$ is a $K(\PBAut(T),1)$.
	\end{corollary}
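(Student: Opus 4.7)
The plan is to chain together the results already established earlier in the paper. By Theorem~\ref{thrm:roundingcurvesinsurfaces} (Part 1), there is a strong deformation retraction of $\UJ_n(\RR^2)$ onto $\RUJ_n(\RR^2)$, and analogously of $\J_n(\RR^2)$ onto $\RJ_n(\RR^2)$. Since deformation retractions preserve path components, restricting to the component $\UJ_T(\RR^2)$ produces a strong deformation retraction onto the corresponding component of $\RUJ_n(\RR^2)$, which I would call $\RUJ_T(\RR^2)$.

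Next I would invoke Theorem~\ref{thm:embeddingconfspaces}, which embeds $\UConf_n(S^1,\RR^2)$ homeomorphically onto $\RUJ_n(\RR^2)$ (and $\Conf_n(S^1,\RR^2)$ onto $\RJ_n(\RR^2)$). The verification that this homeomorphism sends $\UConf_T(S^1,\RR^2)$ exactly onto $\RUJ_T(\RR^2)$ reduces to checking that the two notions of the ``tree of a configuration'' agree under the embedding, which is the content of the lemma stated immediately after Definition~\ref{dfn:abstracttreeJrdancurves} (combined with Lemma~\ref{lem:abstracttreeandtreerelate} identifying planar trees with abstract trees); both Theorem~A and Proposition~\ref{prop:components} then guarantee that the abstract tree is a complete invariant of path components on either side.

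Composing these two identifications gives a homotopy equivalence $\UJ_T(\RR^2)\simeq \UConf_T(S^1,\RR^2)$. Theorem~\ref{thrm:confcircunlabclassifying} states that $\UConf_T(S^1,\RR^2)$ is a $K(\BAut(T),1)$, so $\UJ_T(\RR^2)$ is as well. The labeled case is handled identically, using $\J_n(\RR^2)\simeq \RJ_n(\RR^2)\cong \Conf_n(S^1,\RR^2)$, restricting to the $T$-component, and then applying Theorem~\ref{thrm:confcirclabclassifying} to conclude $\J_T(\RR^2)$ is a $K(\PBAut(T),1)$.

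There is no real obstacle here, since every ingredient has been assembled; the only point requiring care is the bookkeeping that the indexing of path components by trees is carried along by the homotopy equivalences above, so that the ``$T$-component'' on one side maps to the ``$T$-component'' on the other. That bookkeeping is immediate from Lemma~\ref{lem:pathconnembsgivethesameabstracttree}, which shows that the abstract tree is constant on path components, together with Theorem~A, which shows it is a complete invariant.
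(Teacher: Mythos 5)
Your argument is correct and is essentially the paper's proof: the paper simply cites Definition~\ref{dfn:jt}, which already packages the homotopy equivalence $\UJ_T(\RR^2)\simeq\UConf_T(S^1,\RR^2)$ (via Corollary~\ref{cor:he}, i.e.\ the rounding retraction plus the embedding of Theorem~\ref{thm:embeddingconfspaces}), and then applies Theorems~\ref{thrm:confcircunlabclassifying} and~\ref{thrm:confcirclabclassifying}. You merely unpack that homotopy equivalence and the tree bookkeeping explicitly, which is fine.
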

	\begin{proof}
		This follows immediately from Theorems~\ref{thrm:confcircunlabclassifying} and~\ref{thrm:confcirclabclassifying} and Definition~\ref{dfn:jt}.
	\end{proof}
	
	\begin{definition}
		Recall from Section~\ref{sec:rounding} the partial ordering of $\{\varnothing,1,\dots,n\}$ given by $T(f)$, and recall $d_j$, the quantity of $i\neq\varnothing$ such that $j<i$. Fix a Jordan configuration with abstract tree $T$, denoted $\kappa_T=\{[f_{T,1}],\dots,[f_{T,n}]\}$.
		Similarly to Theorems~\ref{thrm:confcircunlabclassifying}~and~\ref{thrm:confcirclabclassifying}, we define the map $\pi:\UJ_T(X)\to \UJ_m(X)$ to send $\{[f_1],\dots,[f_n]\}$ to $\{[f_i]\mid d_i=0\}$. 
		The induced map on trees sends $T$ to the subtree of $T$ containing only the root and all of the children of the root, call this tree $\Lambda$, so $\pi$ takes Jordan configurations in $\UJ_T(X)$ to Jordan configurations in $\UJ_{\Lambda}(X)$. 
		Define $\kappa_{\Lambda}=\{[f_{\Lambda,1}],\dots,[f_{\Lambda,m}]\}$ to be the image of $\kappa_T$ under $\pi$, and we assume that $\kappa_T$ is indexed so that $[f_{T,i}]=[f_{\Lambda,i}]$ for $i\in\{1,\dots,m\}$. 
		The fiber of $f\in \UJ_{\Lambda}(X)$ is the collection of all Jordan configurations in $\UJ_T(X)$ with un-nested Jordan configurations $f$. 
		Let $F$ be the subspace of $\UJ_T(X)$ consisting of Jordan configurations with un-nested curves $\kappa_{\Lambda}$. 
	\end{definition}
		
		\begin{proposition}
			The map $\pi$ makes $\UJ_T(X)$ a fiber bundle over $\UJ_{\Lambda}(X)$, with fiber $F$.
		\end{proposition}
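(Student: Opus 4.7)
The plan is to establish local triviality of $\pi$ by building explicit trivializations over small neighborhoods in $\UJ_\Lambda(X)$, using the conformal parametrizations of Carath\'eodory's Theorem (Lemma~\ref{lem:conformalmapping}) to transport nested curves between fibers.

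First I would fix $g_0 \in \UJ_\Lambda(X)$ and choose a neighborhood $U$ of $g_0$ small enough that each $g \in U$ admits a canonical labeling of its curves by proximity to those of $g_0$; this yields a continuous local section $U \to \J_\Lambda(X)$ sending $g$ to an ordered tuple $([g_1],\dots,[g_m])$ with $[g_i]$ close to the $i$th curve of $g_0$. Using Corollary~\ref{cor:existcenter} I obtain a continuously varying center $c(D(g_i)) \in D(g_i)$, and fixing a continuous section of unit tangent vectors along this family, Lemma~\ref{lem:conformalmapping} produces conformal parametrizations $\gamma_{g_i}\colon B^2 \to D(g_i)$ depending continuously on $g$. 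Setting $\Psi_{g,i} \coloneqq \gamma_{g_i} \circ \gamma_{f_{\Lambda,i}}^{-1}$ then gives a homeomorphism $D(f_{\Lambda,i}) \to D(g_i)$ depending continuously on $g$, which in particular carries Jordan curves to Jordan curves and preserves nesting.

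The trivialization $\Phi\colon \pi^{-1}(U) \to U \times F$ sends a configuration $h \in \pi^{-1}(U)$ with $\pi(h) = g$ to the pair $(g, h')$, where $h' \in F$ has un-nested curves $\kappa_\Lambda$ and each nested curve $[h_k]$ lying inside $[g_i]$ is replaced with $[\Psi_{g,i}^{-1} \circ h_k]$ inside $[f_{\Lambda,i}]$; the inverse $\Phi^{-1}$ applies $\Psi_{g,i}$ instead. Because each $\Psi_{g,i}$ is a homeomorphism preserving Jordan curves and nesting, the resulting $h'$ genuinely lies in $F$ and the two maps are inverse bijections over $U$.

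The main obstacle is verifying continuity of $\Phi$ and $\Phi^{-1}$. This reduces to continuity of $g_i \mapsto \gamma_{g_i}$, which is the last sentence of Lemma~\ref{lem:conformalmapping}, together with continuity of composition in the compact-open topology on $\Emb(S^1,X)$ and passage to the quotient by $\Homeo(S^1)$; since the ambient spaces are sequential by Lemma~\ref{lem:sequential}, sequential continuity suffices. Some care is also required to ensure the local labeling by proximity and the continuous section of unit tangent vectors can be chosen simultaneously; both are routine local choices on a sufficiently small neighborhood of $g_0$, and any two continuous choices of tangent direction yield trivializations that differ by a rotation on each $B^2$, so the resulting fiber bundle structure is well defined independently of the choices.
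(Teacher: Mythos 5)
Your proposal is correct and follows essentially the same route as the paper: a neighborhood $U$ defined by proximity of the equivariant centers, and a trivialization $\pi^{-1}(U)\to U\times F$ that transports nested curves via the composites $\gamma_{f_{\Lambda,i}}\circ\gamma_{g_i}^{-1}$ of Carath\'eodory parametrizations, with continuity coming from the last statement of Lemma~\ref{lem:conformalmapping}. The only cosmetic difference is that you anchor the labeling at the basepoint $g_0$ of the chart rather than at $\kappa_\Lambda$ directly, which changes nothing of substance.
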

		\begin{proof}
			Let $f=\{[f_1],\dots,[f_m]\}$ be an element of $\UJ_{\Lambda}(X)$. Fix a path from $f$ to $\kappa_{\Lambda}$ and note that this induces a bijection $f\to\kappa_{\Lambda}$. Without loss of generality let $f$ be indexed so that $[f_i]$ is sent to $[f_{\Lambda,i}]$. For $\varepsilon>0$, let $U$ be the open neighborhood of $f$ consisting of the collection of all 
			Jordan configurations in $\UJ_{\Lambda}(X)$ such that for each $i$ there exists a curve with center within $\varepsilon/2$ of the center of $D(f_i)$.
			Choose $\varepsilon$ to be no larger than the minimum pairwise distance between the centers of the domains that each curve of $f$ bounds, so each curve of $f'=\{[f_1'],\dots,[f_m']\}$ in $U$ is uniquely paired with a curve $[f_i]$ of $f$. Without loss of generality let $f'$ be indexed so that the center of $D(f_i')$ is within $\varepsilon/2$ of the center of $D(f_{i})$ and let any Jordan configuration $\hat{f}=\{[\hat{f}_1],\dots,[\hat{f}_n]\}$ in $\pi^{-1}(U)$ be indexed so that for $[\hat{f}_i]$ in $\pi(\hat{f})$ the center of $D(\hat{f}_i)$ is within $\varepsilon/2$ of the center of $D(f_{i})$. 
		
			We want to show that $\pi^{-1}(U)$ is homeomorphic to $U\times F$. 
			Recall the continuous choice of $\gamma_{f_i}\in\Emb(B^2,X)$ from  the construction in subsection~\ref{sec:proof} guaranteed by Lemma~\ref{lem:conformalmapping}.
			Define $\phi:\pi^{-1}(U)\to F$ to send $\hat{f}=\{[\hat{f}_1],\dots,[\hat{f}_n]\}$ to $\{[{f}_{\Lambda,i}],[\gamma_{f_{\Lambda,i}}\circ\gamma^{-1}_{\hat{f}_{i}}\circ\hat{f}_{j}]\mid d_i=0, j\leq i\}$. 
			We claim $\pi\times\phi:\pi^{-1}(U)\to U\times F$ is the desired homeomorphism. Let $(f',\overline{f})$ be an element of $U\times F$. 
			So we have $f'=\{[f_1'],\dots,[f_m']\}$ with the center of $D(f_i')$ within $\varepsilon/2$ of the center of $D(f_i)$, the configuration $\overline{f}=\kappa_{\Lambda}\cup\{[\overline{f}_{m+1}],\dots,[\overline{f}_{n}]\}$ has abstract tree $T$, 
			and $[\overline{f}_j]$ is nested inside $[f_{\Lambda,i}]$ for $j\in\{m+1,\dots,n\}$ and some $i\in\{1,\dots,m\}$, written as $j<i$. 
			The inverse of $\pi\times\phi$ sends $(f',\overline{f})$ to $f'\cup\{[\gamma_{f_i'}\circ\gamma^{-1}_{{f}_{\Lambda,i}}\circ\overline{f}_{j}]\mid d_i=0,j<i\}$. 
			Hence, we have a fiber bundle $\pi:\UJ_T(X)\to \UJ_{\Lambda}(X)$ with fiber $F$.
		\end{proof}
	
	\begin{theoremC}
		Let $T$ be a tree with $n$ non-root vertices. With $\{v_1,\dots,v_m\}$ the set of children of the root of $T$, let $\Pi$ be the partition of $\{1,\dots,m\}$ such that $i$ and $j$ share a block of $\Pi$ if and only if $T(v_i)=T(v_j)$. Then $\UJ_T(X)$ is aspherical, and its fundamental group is isomorphic to $$(\BAut(T(v_1))\times\cdots\times\BAut(T(v_m)))\rtimes B_m^{\Pi}(X).$$ Hence, $\UJ_T(X)$ is a $K(\pi,1)$.
	\end{theoremC}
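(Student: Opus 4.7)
The plan is to exploit the fiber bundle $F \to \UJ_T(X) \xrightarrow{\pi} \UJ_\Lambda(X)$ established immediately before the theorem, together with induction on the number of non-root vertices $n$ of $T$. The base case consists of trees $T$ where every non-root vertex is a child of the root; here $\UJ_T(X)$ coincides with the unique non-nested component of $\UJ_n(X)$, which by Corollary~\ref{cor:hebetweenconfpointandnonnestedcomponents} is homotopy equivalent to $\UConf_n(*,X)$, and hence a $K(B_n(X),1)$. Since in this case each $\BAut(T(v_i))$ is trivial and $\Pi = \{\{1,\dots,n\}\}$ so that $B_m^\Pi(X) = B_m(X)$, the formula reduces to $\pi_1 = B_n(X)$, agreeing with the statement.

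For the inductive step, I would first identify the homotopy types of the base and the fiber. The base $\UJ_\Lambda(X)$ is homotopy equivalent to $\UConf_m(*,X)$ by Corollary~\ref{cor:hebetweenconfpointandnonnestedcomponents}, so it is aspherical with $\pi_1(\UJ_\Lambda(X)) = B_m(X)$. For the fiber $F$ over the basepoint $\kappa_\Lambda = \{[f_{\Lambda,1}],\dots,[f_{\Lambda,m}]\}$, an element is the union of $\kappa_\Lambda$ with a configuration inside each disk $D(f_{\Lambda,i})$, subject to the total abstract tree being $T$. This amounts to choosing a bijection between the disks and the multiset of subtrees $\{T(v_1),\dots,T(v_m)\}$ (up to permutations preserving the multiset, i.e.\ up to $\Sigma_m^\Pi$) together with an element of $\UJ_{T(v_{\sigma(i)})}(D(f_{\Lambda,i}))$ for each assignment $\sigma$. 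Hence $F$ is the disjoint union, indexed by $\Sigma_m/\Sigma_m^\Pi$, of spaces of the form $\prod_{i=1}^m \UJ_{T(v_{\sigma(i)})}(D(f_{\Lambda,i}))$. Each disk is homeomorphic to $\RR^2$, so by the inductive hypothesis each factor is a $K(\BAut(T(v_i)),1)$, making each component of $F$ aspherical with fundamental group $\prod_i \BAut(T(v_i))$.

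The long exact sequence of the fibration then yields $\pi_k(\UJ_T(X)) = 0$ for $k \geq 2$, establishing asphericity. At the fundamental group level we have the exact sequence
\[
1 \to \prod_{i=1}^m \BAut(T(v_i)) \to \pi_1(\UJ_T(X)) \to B_m(X) \xrightarrow{\partial} \pi_0(F) = \Sigma_m/\Sigma_m^\Pi.
\]
The monodromy action of $\pi_1(\UJ_\Lambda(X)) = B_m(X)$ on $\pi_0(F)$ factors through the natural projection $B_m(X) \to \Sigma_m$ and permutes assignments of subtrees to disks, so the stabilizer of the base component of $F$ is exactly $B_m^\Pi(X)$. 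Thus the image of $\pi_1(\UJ_T(X)) \to B_m(X)$ is $B_m^\Pi(X)$. To split this sequence, I would construct a section: given a braid $\beta \in B_m^\Pi(X)$ represented as a loop in $\UJ_\Lambda(X)$ returning to $\kappa_\Lambda$ after a permutation in $\Sigma_m^\Pi$, lift it to $\UJ_T(X)$ using the local trivializations of the fiber bundle, keeping the interior sub-configurations rigidly dragged through the conformal maps $\gamma_{f_{\Lambda,i}}$ associated to each outer curve. The action of $B_m^\Pi(X)$ on $\prod_i \BAut(T(v_i))$ induced by this splitting permutes the factors according to the image in $\Sigma_m^\Pi$, producing the semidirect product claimed.

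The main technical hurdle is verifying that this section is a well-defined group homomorphism and that its action on the kernel matches the permutation action in the stated semidirect product, which requires checking that the conformal-transport construction is compatible with concatenation of loops. Once this is established, the conclusion that $\UJ_T(X)$ is a $K(\pi,1)$ follows since Theorem~\ref{thrm:roundingcurvesinsurfaces} gives a homotopy equivalence $\UJ_T(X) \simeq \RUJ_T(X)$, and $\RUJ_T(X)$ is an open subspace of a finite-dimensional manifold (coordinates being centers and radii satisfying the nesting inequalities), hence has the homotopy type of a CW complex.
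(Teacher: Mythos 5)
Your proposal follows essentially the same route as the paper: the fiber bundle $F\to\UJ_T(X)\to\UJ_{\Lambda}(X)$, its long exact sequence, identification of the image in $B_m(X)$ as $B_m^{\Pi}(X)$ via the monodromy on $\pi_0(F)$, and a splitting obtained by conformally transporting the nested sub-configurations along a loop of un-nested curves. The only deviations are minor and organizational --- you induct on $n$ where the paper invokes Theorem~\ref{thrm:confcircunlabclassifying} together with Corollary~\ref{cor:he} to handle the fiber, you deduce the CW homotopy type from $\RUJ_T(X)$ being an open subset of a finite-dimensional manifold rather than from the fiber-bundle criterion of \cite{fritsch1990cellular}, and your description of $\pi_0(F)$ as indexed by $\Sigma_m/\Sigma_m^{\Pi}$ is in fact more precise than the paper's.
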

	\begin{proof}
		From the above fiber bundle comes a long exact sequence of homotopy groups:
		$$\cdots\to\pi_k(F,\kappa_T)\to\pi_k(\UJ_T(X),\kappa_T)\to\pi_k(\UJ_{\Lambda}(X),\kappa_{\Lambda})\to\pi_{k-1}(F,\kappa_T)\to\cdots.$$
		Notice that $F$ is homeomorphic to $\UJ_{T(v_1)}(\interior(B^2))\times\dots\times \UJ_{T(v_m)}(\interior(B^2))$ and is aspherical with fundamental group $\BAut(T(v_1))\times\cdots\times \BAut(T(v_m))$ by Theorem~\ref{thrm:confcircunlabclassifying} and Corollary~\ref{cor:he}. Also, $\UJ_{\Lambda}(X)$ is aspherical with fundamental group $B_m(X)$ by Corollary~\ref{cor:hebetweenconfpointandnonnestedcomponents}.
		Hence, $\pi_k(\UJ_T(X),\kappa_T)$ is trivial when $k\geq2$, which yields: 
		$$0\to \prod^m_i\BAut(T(v_i))\to\pi_1(\UJ_T(X),\kappa_T)\to B_m(X)\to\pi_0(F,\kappa_T))\to0.$$
		An element of $\pi_1(\UJ_{\Lambda}(X),\kappa_{\Lambda})$ is in the image of $\pi_1(\UJ_T(X),\kappa_T)$ only if it is represented by a path such that for each $i$ with $d_i=0$, the un-nested curve $[f_i]$ ends up at an un-nested curve $[f_j]$ with $d_j=0$. 
		However, the curves nested inside $[f_i]$ and $[f_j]$ must also permute with each other; this can happen if and only if $T(v_i)$ and $T(v_j)$ are the same tree. 
		Let $\Pi$ be the partition of $\{1,\dots,m\}$ where $i$ and $j$ share a block if and only if $T(v_i)=T(v_j)$.
		Thus, viewing $\pi_1(\UJ_{\Lambda}(X),\kappa_{\Lambda})$ as $B_m(X)$, the image of the map $\pi_1(\UJ_{T}(X),\kappa_T) \to \pi_1(\UJ_{\Lambda}(X),\kappa_{\Lambda})$ is isomorphic to $B_m^{\Pi}(X)$. 
		This yields the short exact sequence
		$$0\to \prod^m_i\BAut(T(v_i))\to\pi_1(\UJ_T(X),\kappa_T)\to B_m^{\Pi}(X)\to0.$$ 
		
		Any element in $B^{\Pi}_m(X)\leq\pi_1(\UJ_{\Lambda}(X),\kappa_{\Lambda})$ is represented by a loop $p_t=\{[f_{1,t}],\dots,[f_{m,t}]\}$ based at $\kappa_{\Lambda}$. Assuming that the indices of $\kappa_{\Lambda}$ and $\kappa_T$ agree with $p_t$, that is $[f_{T,i}]=[f_{\Lambda,i}]=[f_{i,t}]$,
		we get an induced homomorphism, $B_m^{\Pi}(X)\to\pi_1(\UJ_T(X),\kappa_T)$, by sending $p_t$ to $p_t\cup\{[\gamma_{f_{i,t}}\circ\gamma^{-1}_{f_{\Lambda,i}}\circ f_{T,j}]\mid d_i=0,j\leq i\}$. This homomorphism splits the above sequence and yields the result
		\[
		\pi_1(\UJ_T(X),\kappa_T)\cong(\BAut(T(v_1))\times\cdots\times \BAut(T(v_m))) \rtimes B_m^{\Pi}(X).
		\]
		The action of $B_m^{\Pi}(X)$ on $(\BAut(T(v_1))\times\cdots\times \BAut(T(v_m)))$ induced by the splitting is given by the projection $B_m^{\Pi}(X)\to\Sigma_m^{\Pi}$.
		
		Given a fiber bundle $E\to B$ with $B$ path connected and both the base and fiber are homotopy equivalent to CW-complexes, Theorem~\cite[5.4.2]{fritsch1990cellular} guarantees that the total space $E$ is homotopy equivalent to a CW-complex. 
		Note that $\pi:\UJ_T(X)\to\UJ_{\Lambda}(X)$ has a path connected base and the fiber is homeomorphic to a product of spaces homotopy equivalent to CW-complexes by Theorem~\ref{thrm:confcircunlabclassifying}~and~Corollary~\ref{cor:he}.
		The base space is homotopy equivalent to a CW-complex by Corollary~\ref{cor:hebetweenconfpointandnonnestedcomponents} and the fact that configuration spaces of points in surfaces have the type of CW-complexes; this can be proved using Theorem~\cite[5.4.2]{fritsch1990cellular} and induction on the fundamental sequence of fibrations \cite{fadell1962configuration}.
	\end{proof}
	
	\begin{theorem}\label{thrm:jorconflabclassifying}
		Let $T$ be a labeled tree with $n$ non-root vertices. With $\{v_1,\dots,v_m\}$ the set of children of the root of $T$, let $\Pi$ be the partition of $\{1,\dots,m\}$ such that $i$ and $j$ share a block of $\Pi$ if and only if $T(v_i)\cong T(v_j)$. Then $\J_T(X)$ is aspherical, and its fundamental group is isomorphic to $\PBAut(T(v_1))\times\cdots\times\PBAut(T(v_m))\times PB_m(X)$. Hence, by Lemma~\ref{lem:labeledBAut}, we have $\Conf_T(S^1,\RR^2)$ is a $K(\BAut(T),1)$,  for the labeled tree $T$.
	\end{theorem}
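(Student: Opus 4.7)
My plan is to deduce this labeled theorem from the unlabeled Theorem~\ref{thrm:classifying} via the forgetful map $p\colon\J_T(X)\to\UJ_T(X)$ that sends a labeled Jordan configuration to its underlying unlabeled configuration. The ambient $\Sigma_n$-action on $\J_n(X)$ is free (no nontrivial permutation fixes a tuple of pairwise disjoint distinct curves), and the quotient by this action is $\UJ_n(X)$, so restricting to the connected component $\J_T(X)$ yields a regular covering whose deck group is the $\Sigma_n$-stabilizer of $\J_T(X)$. By Lemma~\ref{lem:pathconnembsgivethesameabstracttree} (and the labeled part of Theorem~\ref{thrm:concomp}), this stabilizer consists of precisely those label permutations that preserve the edge structure of $T$, namely $\Aut(U(T))$, the automorphism group of the underlying unlabeled tree $U(T)$.

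Asphericity and the CW-complex assertion are then immediate corollaries. Covering maps induce isomorphisms on $\pi_k$ for $k\geq 2$, so $\J_T(X)$ inherits asphericity from $\UJ_T(X)$. Moreover, $p$ is a fiber bundle with finite discrete fiber over a base which has the homotopy type of a CW-complex by Theorem~\ref{thrm:classifying}, so Theorem~\cite[5.4.2]{fritsch1990cellular} delivers the same for $\J_T(X)$.

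For the fundamental group I would use the short exact sequence
\[1\to\pi_1(\J_T(X))\to\pi_1(\UJ_T(X))\to\Aut(U(T))\to 1\]
associated to the cover. Theorem~\ref{thrm:classifying} and Proposition~\ref{prop:decomp_Aut(T)} unpack this as a map of semidirect products
\[(\BAut(T(v_1))\times\cdots\times\BAut(T(v_m)))\rtimes B_m^{\Pi}(X)\longrightarrow(\Aut(T(v_1))\times\cdots\times\Aut(T(v_m)))\rtimes\Sigma_m^{\Pi}\]
given componentwise by the natural surjections $\BAut(T(v_i))\to\Aut(T(v_i))$ and $B_m^{\Pi}(X)\to\Sigma_m^{\Pi}$. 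Its kernel comprises those tuples whose $\BAut(T(v_i))$-entries lie in the kernels $\PBAut(T(v_i))$ and whose braid-group entry lies in $PB_m(X)$. Because $PB_m(X)$ projects trivially to $\Sigma_m^{\Pi}$, its conjugation action on $\prod_i\PBAut(T(v_i))$ is trivial, and the semidirect product structure on the kernel collapses to a direct product, yielding
\[\pi_1(\J_T(X))\cong\PBAut(T(v_1))\times\cdots\times\PBAut(T(v_m))\times PB_m(X),\]
as desired.

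The step requiring the most care is verifying that the quotient map induced by the covering truly matches the componentwise projection described above, rather than some twisted version. This I expect to follow from tracing the definitions: a loop in $\UJ_T(X)$ records simultaneously how the un-nested curves braid (an element of $B_m^{\Pi}(X)$ whose image in $\Sigma_m^{\Pi}$ records the resulting permutation of root-children of $T$) and how the sub-configurations nested inside each $f_i$ braid within their parent disks (elements of $\BAut(T(v_i))$ whose image in $\Aut(T(v_i))$ records the induced relabeling within the subtree), and these two pieces assemble into exactly the deck-transformation describing how labels have been permuted.
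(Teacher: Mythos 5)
Your proposal is correct and follows essentially the same route as the paper: both identify $\J_T(X)\to\UJ_T(X)$ as a regular cover with deck group $\Aut(U(T))$, deduce asphericity from Theorem~\ref{thrm:classifying}, and compute $\pi_1(\J_T(X))$ as the kernel of the projection $\pi_1(\UJ_T(X))\to\Aut(U(T))$, which collapses to the stated direct product because $PB_m(X)$ acts trivially. Your explicit unpacking of that kernel as a map of semidirect products is slightly more detailed than the paper's appeal to \cite[Prop.~1.39]{hatcher02}, but it is the same argument.
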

	\begin{proof}
		Since $\J_T(X)$ is a covering space of $\UJ_T(X)$, which is aspherical by Theorem~\ref{thrm:classifying}, we know that $\J_T(X)$ is also aspherical.
		
		The deck group of this cover is the subgroup of $\Sigma_n$ that stabilizes the path connected component $\J_T(X)$ in $\J_n(X)$ and fixes the underlying sets of labeled Jordan configurations of $\J_n(X)$. In particular, the fiber of the fixed unlabeled Jordan configuration of $U(T)$, the underlying unlabeled tree of $T$, is the collection of all labelings of this configuration such that the nesting order of the labels is preserved. 
		It follows that the deck group is $\Aut(U(T))$ and it acts transitively on the fiber, and hence this is a regular covering. By Proposition~1.39~in~\cite{hatcher02}, $\pi_1(\J_T(X),\kappa_T)$ is isomorphic to the kernel of $\pi\colon \BAut(T)\to \Aut(T)$, which is $\PBAut(T)$.
	\end{proof}
	
	\textbf{Further Study.} An immediate generalization would be to consider the entire space of simple closed curves in surfaces, including essential curves and curves that bound marked points and boundary components. 
	We would also like to repeat the results in this paper for the Jordan configurations in a sphere, however there is not a unique disk that any curve bounds so we cannot repeat this paper's arguments; Belegradek and Ghomi show that the space of unparameterized domains in a sphere derformation retract onto the space of hemispheres \cite{belegradek2025point}.
	Generalizing to Jordan configurations in other two dimensional spaces should also be interesting; for example, given a graph, $\Gamma$, we would like to study the space of Jordan configurations in $\Gamma\times \RR$.

	\bibliographystyle{alpha}
	\bibliography{biblio}
\end{document}